\theoremstyle{plain}
\newtheorem{theorem}{Theorem}[section]
\newtheorem{lemma}[theorem]{Lemma}
\newtheorem{corollary}[theorem]{Corollary}
\theoremstyle{definition}
\newtheorem{remark}[theorem]{Remark}
\newtheorem{definition}[theorem]{Definition}
\newtheorem{example}[theorem]{Example}
\newcommand*{\LargerCdot}{\raisebox{-0.25ex}{\scalebox{1.2}{$\cdot$}}}
\newcommand{\field}[1]{\mathbb{#1}}
\DeclareMathOperator{\rank}{rank}
\DeclareMathOperator{\A}{A}
\DeclareMathOperator{\divergence}{div}
\DeclareMathOperator{\curl}{curl}
\DeclareMathOperator{\Ker}{Ker}
\DeclareMathOperator{\Real}{Re}
\DeclareMathOperator{\Imaginary}{Im}
\DeclareMathOperator{\grad}{grad}
\DeclareMathOperator{\lcrs}{lcr}
\renewcommand{\Re}{\Real}
\renewcommand{\div}{\divergence}
\renewcommand{\Im}{\Imaginary}
\title[Isothermic triangulated surfaces]{Isothermic triangulated surfaces}
\author{Wai Yeung Lam}
\author{Ulrich Pinkall}
\address{Wai Yeung Lam\\
	Technische Universit\"at Berlin\\Institut f\"ur Mathematik\\
	Stra{\ss}e des 17.\ Juni 136\\
	10623 Berlin\\ Germany}
\address{Ulrich Pinkall\\
	Technische Universit\"at Berlin\\Institut f\"ur Mathematik\\
	Stra{\ss}e des 17.\ Juni 136\\
	10623 Berlin\\ Germany}
\email{lam@math.tu-berlin.de, pinkall@math.tu-berlin.de}
\begin{document}
	
	\begin{abstract}
		We found a class of triangulated surfaces in Euclidean space which have similar properties as isothermic surfaces in Differential Geometry. We call a surface isothermic if it admits an infinitesimal isometric deformation preserving the mean curvature integrand locally. We show that this class is M\"{o}bius invariant. Isothermic triangulated surfaces can be characterized either in terms of circle patterns or based on conformal equivalence of triangle meshes. This definition generalizes isothermic quadrilateral meshes.
		
		A consequence is a discrete analog of minimal surfaces. Here the Weierstrass data needed to construct a discrete minimal surface consist of a triangulated plane domain and a discrete harmonic function.
	\end{abstract}

\thanks{This research was supported by the DFG Collaborative Research Centre SFB/TRR 109 \emph{Discretization in Geometry and Dynamics}. The first author was partially supported by Berlin Mathematical School and the Croucher Foundation of Hong Kong.}

\date{\today}

\maketitle

\section{Introduction}\label{sec:intro}

Isothermic surfaces are central objects in classical Differential Geometry. They include all surfaces of revolution, quadrics, constant mean curvature surfaces and many other interesting surfaces \cite{Hertrich-Jeromin2003}. In particular, all classes of surfaces that are describable in terms of integrable systems in some way or other seem to be related to isothermic surfaces \cite{Burstall2012,Cieslinski1995}.

A smooth surface in Euclidean space is called isothermic if it admits conformal curvature line parametrization around every point. Note however that there are various characterizations of isothermic surfaces that do not refer to special parametrizations.   

Discrete Differential Geometry lies between Discrete Geometry and Differential Geometry. The geometry of a discrete surface is determined by the positions of a finite number of vertices, such as those of a triangulated surface in Euclidean space. Smooth surfaces in Differential Geometry can be regarded as limits of discrete surfaces by refinement. The goal of Discrete Differential Geometry is to look for mathematical structures on discrete surfaces as rich as their smooth counterparts. It has many applications, for example in computer graphics and architectural design \cite{Pottmann2015}.

The same spirit applied to complex analysis has led to two different definitions of conformality for planar triangular meshes. One of these two is the theory of \emph{circle patterns} \cite{Schramm1997}, where the conformal structure is defined by the intersection angles of neighboring circumcircles. It is motivated by Thurston's circle packings as a discrete analog of holomorphic functions \cite{Rodin1987}. Another version of discrete conformality is based on {\em conformal equivalence of triangle meshes} \cite{Luo2004,Springborn2008}, where the conformal structure is defined by the length cross ratios of neighboring triangles. Luo introduced this notion when studying a discrete Yamabe flow. Its relation to ideal hyperbolic polyhedra was investigated in \cite{Bobenko2010}. 

Previous definitions of discrete isothermic surfaces were all based on quadrilateral meshes that provide a discrete version of conformal curvature line parametrizations of isothermic surface \cite{Bobenko1996,Bobenko1999,Bobenko2007}. Inspired by discrete integrable systems \cite{Bobenko2008}, Bobenko and Pinkall \cite{Bobenko1996} considered quadrilateral meshes with factorized real cross ratios, which led to further investigation of discrete minimal surfaces and constant mean curvature surfaces \cite{Hertrich1999}. Recently, the notion of curvature was introduced to discrete surfaces with vertex normals \cite{Bobenko2010a,Hoffmann2014}.

Here we aim for a definition of isothermic triangulated surfaces which does not involve conformal curvature line parametrizations. It is motivated by a known (although not well-known) characterization that a smooth surface in Euclidean space is isothermic if and only if locally it admits a nontrivial infinitesimal isometric deformation preserving the mean curvature. The only reference that we could find is from Cie{\'s}li{\'n}ski et al. \cite{Cieslinski1995}, stating that this theorem was known in the 19th century.

Infinitesimal isometric deformations of triangulated surfaces have been extensively studied since Cauchy's rigidity theorem of convex polyhedral surfaces \cite{Whiteley1997,Connelly1993}. An \emph{infinitesimal deformation} of a triangulated surface in space is an assignment of velocity vectors to all the vertices. We can then calculate the change of edge lengths. An infinitesimal deformation is called \emph{isometric} if the edge lengths are preserved.

Suppose we have a realization $f:V\to \field{R}^3$ of a triangulated surface $M=(V,E,F)$ such that each face of $f$ spans an affine plane. Given an infinitesimal isometric deformation $\dot{f}:V \to \field{R}^3$, each triangular face $\{i\!jk\}$ rotates with an angular velocity given by a certain vector $Z_{i\!jk} \in \field{R}^3$. These vectors satisfy a compatibility condition on every interior edge $\{i\!j\}$:
\begin{equation} \label{eq:dihe}
	d\!\dot{f}(e_{i\!j})=d\!f(e_{i\!j}) \times Z_{i\!jk} = d\!f(e_{i\!j}) \times Z_{\!jil},
\end{equation}
where $\{i\!jk\} \in F$ is the left face of $e_{i\!j}$ and $\{\!jil\}\in F$ is the right face.

On the other hand, it is well-known that the integral $\int H\,dA$ of the mean curvature has a very canonical discrete analogue $\sum H_{i\!j}$. Here we have defined the {\em mean curvature} associated to edge $\{i\!j\}$ as 
\[
H_{i\!j} := \alpha_{i\!j} |d\!f(e_{i\!j})|
\]
where $\alpha_{i\!j}$ is the dihedral angle at the edge $\{i\!j\}$ \cite{Sullivan2008}. Under the infinitesimal isometric deformation given by $Z$ on faces (Equation \eqref{eq:dihe}), we have
\[
\dot{H}_{i\!j} =  \dot{\alpha}_{i\!j} |d\!f(e_{i\!j})| = \langle d\!f(e_{i\!j}), Z_{i\!jk} - Z_{\!jil} \rangle.
\]
If we further demanded $\dot{H}_{i\!j} = 0$ on every edge $\{i\!j\}$ then the infinitesimal isometric deformation would be trivial, i.e. an infinitesimal Euclidean deformation. Hence we consider instead the change of the \emph{integrated mean curvature around vertices}
\[
\dot{H}_{i} := \sum_{\!j} \dot{\alpha}_{i\!j} |d\!f(e_{i\!j})| = \sum_{j} \langle d\!f(e_{i\!j}), Z_{i\!jk} - Z_{\!jil} \rangle.
\] 

We are now ready to define isothermic triangulated surfaces. The smooth counterpart of the following formulation for isothermic surfaces is given by Smyth \cite{Smyth2004}. 

\begin{definition}\label{def:isoth}
	A non-degenerate realization $f:V \to \field{R}^3$ of an oriented triangulated surface, with or without boundary, is called \emph{isothermic} if there exists a $\field{R}^3$-valued dual 1-form $\tau: \vv{E}^*_{int} \to \field{R}^3$, not identically zero, such that
	\begin{align}
		\sum_{j} \tau(e^*_{i\!j}) &= 0 \quad \forall i \in V_{int} \label{eq:closed} \\
		d\!f(e_{i\!j}) \times \tau(e^*_{i\!j}) &= 0  \quad \forall \{i\!j\} \in E_{int} \label{eq:cross}\\
		\sum_{j} \langle d\!f(e_{i\!j}), \tau(e^*_{i\!j}) \rangle &=0 \quad \forall i \in V_{int} \label{eq:sum}
	\end{align}
	Here $\vv{E}^*_{int}$ and $V_{int}$ denote the set of interior oriented dual edges and the set of interior vertices of $M$.
\end{definition}
The following is an immediate consequence of our definition.
\begin{corollary}\label{cor:infiniterigidmean}
	A strongly non-degenerate realization of a simply connected triangulated surface is isothermic if and only if there exists an infinitesimal isometric deformation that preserves the integrated mean curvature around vertices but is not induced from Euclidean transformations. 
\end{corollary}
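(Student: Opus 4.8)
The plan is to set up an explicit dictionary between the dual $1$-form $\tau$ of Definition \ref{def:isoth} and the angular velocities $Z\colon F\to\field{R}^3$ of an infinitesimal isometric deformation, and then to read off the three defining equations one at a time. The bridge is the formula $\tau(e^*_{ij})=Z_{ijk}-Z_{jil}$, where $\{ijk\}$ and $\{jil\}$ are the left and right faces of the oriented edge $e_{ij}$; in the language of discrete exterior calculus this says exactly that $\tau=dZ$ is the coboundary of the dual $0$-form $Z$.

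First I would treat the direction asserting that an isothermic realization admits the desired deformation. Starting from $\tau$ satisfying \eqref{eq:closed}, \eqref{eq:cross}, \eqref{eq:sum}, I would observe that \eqref{eq:closed} is precisely the statement that $\tau$ is a closed $\field{R}^3$-valued dual $1$-form. Since $M$ is simply connected, the discrete Poincar\'e lemma on the dual complex yields a function $Z\colon F\to\field{R}^3$, unique up to an additive constant, with $\tau=dZ$. Equation \eqref{eq:cross} then becomes $df(e_{ij})\times Z_{ijk}=df(e_{ij})\times Z_{jil}$, which is the compatibility condition \eqref{eq:dihe}; hence $d\dot f(e_{ij}):=df(e_{ij})\times Z_{ijk}$ is well defined on interior edges. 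Because the edge vectors of every face sum to zero, $d\dot f$ is closed around each face, so simple connectivity integrates it to a vertex field $\dot f\colon V\to\field{R}^3$; since $\langle df(e_{ij}),d\dot f(e_{ij})\rangle=0$, this $\dot f$ is an infinitesimal isometric deformation. Finally \eqref{eq:sum} reads $\dot H_i=\sum_j\langle df(e_{ij}),Z_{ijk}-Z_{jil}\rangle=0$, so the integrated mean curvature around every interior vertex is preserved.

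For the converse I would reverse each step. A given infinitesimal isometric deformation $\dot f$ assigns to every face its angular velocity $Z_{ijk}$, uniquely determined because the realization is strongly non-degenerate, and these satisfy \eqref{eq:dihe}. Setting $\tau:=dZ$ via $\tau(e^*_{ij})=Z_{ijk}-Z_{jil}$, condition \eqref{eq:closed} holds automatically since a coboundary is closed (the sum around a vertex telescopes), \eqref{eq:cross} is just \eqref{eq:dihe}, and \eqref{eq:sum} is the vanishing of $\dot H_i$. It remains only to match nontriviality: $\tau\not\equiv 0$ is equivalent to $Z$ being non-constant, and $Z$ is constant precisely when $\dot f$ is the restriction of an infinitesimal Euclidean motion $\dot f(i)=a\times f(i)+b$. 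Thus $\tau\not\equiv 0$ corresponds exactly to a deformation not induced from Euclidean transformations.

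I expect the main subtlety to be the two appeals to the discrete Poincar\'e lemma, which is exactly where the hypothesis of simple connectivity enters: once to pass from the closed dual form $\tau$ to its potential $Z$, and once to integrate the closed primal form $d\dot f$ to the vertex field $\dot f$. The remaining verifications are formal once the dictionary $\tau=dZ$ is in place, so beyond fixing the orientation conventions for the dual edges there is no real computation; I would simply check that the chosen sign convention makes the vertex sum in \eqref{eq:closed} the genuine closedness condition for $\tau$.
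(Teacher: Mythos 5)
Your proposal is correct and follows exactly the route the paper intends: the paper states this corollary as "an immediate consequence" of Definition \ref{def:isoth}, with the dictionary $\tau(e^*_{i\!j})=Z_{i\!jk}-Z_{\!jil}$ already laid out in the introduction via Equation \eqref{eq:dihe} and the formula for $\dot H_i$. Your two applications of the discrete Poincar\'e lemma (dual complex for $Z$, primal for $\dot f$), the use of strong non-degeneracy to determine $Z$ uniquely, and the identification of $\tau\equiv 0$ with $Z$ constant, i.e.\ with infinitesimal Euclidean motions, are precisely the details the paper leaves implicit.
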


We will state several results about isothermic triangulated surfaces that closely reflect known theorems from the smooth theory. In Section \ref{sec:mob}, \ref{sec:infcon} and \ref{sec:intangles}, we prove
\begin{theorem}\label{thm:mob}
	The class of isothermic triangulated surfaces is M\"{o}bius invariant.
\end{theorem}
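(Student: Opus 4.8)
The plan is to exploit that the M\"obius group of $\field{R}^3\cup\{\infty\}$ is generated by Euclidean similarities (translations, rotations, reflections, uniform scalings) together with a single sphere inversion, so it suffices to preserve the conditions \eqref{eq:closed}--\eqref{eq:sum} under each generator. A useful first step is to reformulate Definition \ref{def:isoth}. As the realization is non-degenerate, $df(e_{ij})\neq 0$, so \eqref{eq:cross} says precisely that $\tau(e^*_{ij})$ is a real multiple of $df(e_{ij})$; writing $\tau(e^*_{ij})=a_{ij}\,df(e_{ij})$, the weights $a_{ij}$ are symmetric in $i,j$ (both factors reverse sign along an edge), not all zero, and \eqref{eq:closed}, \eqref{eq:sum} read
\[
\sum_j a_{ij}(f_j-f_i)=0,\qquad \sum_j a_{ij}|f_j-f_i|^2=0\qquad\forall i\in V_{int}.
\]
The task becomes: for each generator, exhibit symmetric weights $\hat a_{ij}$, not all zero, satisfying these two identities for the image surface, since \eqref{eq:cross} is then automatic from the ansatz $\hat\tau=\hat a\,d\hat{f}$.

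For a similarity $\hat{f}=\lambda Af+t$ one has $d\hat{f}(e_{ij})=\lambda A\,df(e_{ij})$, and $\hat a_{ij}=a_{ij}$ works, since the first identity is $O(3)$-equivariant with $\lambda$ factoring out and the second merely scales by $\lambda^2$. The heart of the matter is inversion, which after translating so that no vertex lies at the center (possible as there are finitely many vertices) we may take to be $\hat{f}_i=f_i/|f_i|^2$. Here $|d\hat{f}(e_{ij})|^2=|f_j-f_i|^2/(|f_i|^2|f_j|^2)$ by the usual conformality of inversion. I would define
\[
\hat a_{ij}:=|f_i|^2\,|f_j|^2\,a_{ij},
\]
again symmetric and not identically zero. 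The conformal factor is chosen exactly so that the second identity transports for free: $\sum_j\hat a_{ij}|d\hat{f}(e_{ij})|^2=\sum_j a_{ij}|f_j-f_i|^2=0$.

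The real work is the first identity. A direct computation is clean because the factor $|f_i|^2|f_j|^2$ cancels the denominators:
\[
\hat a_{ij}\,d\hat{f}(e_{ij})=a_{ij}\bigl(|f_i|^2 f_j-|f_j|^2 f_i\bigr),
\]
and summing, while using $\sum_j a_{ij}f_j=(\sum_j a_{ij})f_i$ from the original first identity, collapses everything to
\[
\sum_j\hat a_{ij}\,d\hat{f}(e_{ij})=-\Bigl(\sum_j a_{ij}(|f_j|^2-|f_i|^2)\Bigr)f_i .
\]
Since $f_i\neq 0$, it remains to show $\sum_j a_{ij}(|f_j|^2-|f_i|^2)=0$, and this is the one place where the two original conditions must be used together: expanding $|f_j-f_i|^2=|f_j|^2-2\langle f_i,f_j\rangle+|f_i|^2$ in the second identity and eliminating $\sum_j a_{ij}\langle f_i,f_j\rangle$ via the inner product of the first identity with $f_i$ yields exactly this relation. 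I expect this coupling to be the main obstacle: neither \eqref{eq:closed} nor \eqref{eq:sum} is invariant under inversion on its own, and only their precise combination survives, which is the discrete shadow of how conformality ties together the metric and the mean curvature in the smooth theory. With all three conditions verified on each generator, M\"obius invariance of the whole class follows.
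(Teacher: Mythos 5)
Your proposal is correct and follows essentially the same route as the paper: reduce to the generators (Euclidean transformations and an inversion), rewrite $\tau$ as a scalar weight times $df$ so that the conditions become $\sum_j a_{ij}df(e_{ij})=0$ and $\sum_j a_{ij}(|f_j|^2-|f_i|^2)=0$ (the paper's Lemma \ref{lem:eqdef}), and transport the weights under inversion by the factor $|f_i|^2|f_j|^2$ exactly as in the paper's Lemma \ref{lem:trans}. The computations match; only the bookkeeping of when the two conditions are combined differs.
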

\begin{theorem}\label{thm:conformal}
	For a non-degenerate realization $f:V \to \field{R}^3$ of a closed genus-$g$ triangulated surface the space of infinitesimal conformal deformations is of dimension greater or equal to $|V|-6g+6$. The inequality is strict if and only if $f$ is isothermic.
\end{theorem}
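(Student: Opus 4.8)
The plan is to realise the asserted count as the kernel dimension of a single linear operator and then to identify that operator's cokernel with the space of isothermic dual $1$-forms from Definition \ref{def:isoth}. Write $n=|V|$, and recall from Section \ref{sec:infcon} that an infinitesimal deformation $\dot f\colon V\to\field{R}^3$ is \emph{conformal} when the induced change of edge lengths has the form $\dot\ell_{ij}/\ell_{ij}=\tfrac{1}{2}(u_i+u_j)$ for some $u\colon V\to\field{R}$. Since $\ell_{ij}\dot\ell_{ij}=\langle df(e_{ij}),d\dot f(e_{ij})\rangle$, this means the pair $(\dot f,u)$ solves
\begin{equation}\label{eq:confcond}
  \langle df(e_{ij}),\,d\dot f(e_{ij})\rangle=\tfrac{1}{2}(u_i+u_j)\,|df(e_{ij})|^2
  \qquad\forall\,\{ij\}\in E .
\end{equation}
First I would assemble these $|E|$ scalar equations into one linear map $\Phi\colon\field{R}^{3n}\oplus\field{R}^{n}\to\field{R}^{E}$ sending $(\dot f,u)$ to the edge-indexed defect of \eqref{eq:confcond}. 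Because each triangle is nondegenerate and the surface is connected, any $(0,u)\in\ker\Phi$ forces $u_i+u_j=0$ along every edge, hence $u\equiv0$; so projection to the first factor identifies $\ker\Phi$ with the space of infinitesimal conformal deformations, and it suffices to compute $\dim\ker\Phi$.

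By rank--nullity, $\dim\ker\Phi=4n-\rank\Phi=4n-|E|+\dim\operatorname{coker}\Phi$. For a closed genus-$g$ triangulation, Euler's formula together with $2|E|=3|F|$ gives $|E|=3n-6+6g$, so $4n-|E|=n-6g+6$ and therefore
\[
  \dim\ker\Phi=(|V|-6g+6)+\dim\operatorname{coker}\Phi .
\]
This already yields the inequality, and it reduces the ``strict if and only if isothermic'' claim to showing that $\operatorname{coker}\Phi\neq0$ exactly when $f$ is isothermic.

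The heart of the argument---and the step I expect to be most delicate---is to describe $\operatorname{coker}\Phi$ explicitly. I would compute the annihilator of $\operatorname{im}\Phi$: pairing a symmetric edge function $(c_{ij})$ against the defect of \eqref{eq:confcond} and requiring the $\dot f$-part and the $u$-part to vanish separately yields, after the routine sign and orientation bookkeeping,
\[
  \sum_{j} c_{ij}\,df(e_{ij})=0
  \quad\text{and}\quad
  \sum_{j} c_{ij}\,|df(e_{ij})|^2=0
  \qquad\forall\,i\in V .
\]
Setting $\tau(e^*_{ij}):=c_{ij}\,df(e_{ij})$ (an $\field{R}^3$-valued dual $1$-form, odd under orientation reversal since $c$ is symmetric and $df$ is odd) turns these two conditions into exactly \eqref{eq:closed} and \eqref{eq:sum}, while \eqref{eq:cross} holds automatically because $df(e_{ij})\times df(e_{ij})=0$. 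Conversely, nondegeneracy ($df(e_{ij})\neq0$) lets one recover the scalar $c_{ij}=\langle\tau(e^*_{ij}),df(e_{ij})\rangle/|df(e_{ij})|^2$ from any $\tau$ satisfying \eqref{eq:cross}, so the assignment $c\mapsto\tau$ is a linear isomorphism from $\operatorname{coker}\Phi$ onto the space of dual $1$-forms solving \eqref{eq:closed}--\eqref{eq:sum}. Since $f$ is isothermic precisely when this space contains a nonzero element, combining this with the displayed dimension formula proves both the bound $\dim\geq|V|-6g+6$ and the characterization of its strictness.
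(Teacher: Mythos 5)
Your proposal is correct, and the skeleton (rank--nullity for a single linear operator, then identifying its cokernel with the space of dual $1$-forms of Definition \ref{def:isoth}) matches the paper's in spirit, but the route is genuinely different. The paper factors the problem through the logarithmic-length-cross-ratio operator $L$: it first proves that $L$ is skew-adjoint, computes $\Ker(L)$ and $\Im(L)$ (of dimension $|E|-|V|$), establishes as a separate theorem that $f$ is isothermic iff $L\circ\sigma$ fails to surject onto $\Im(L)$, and only then reads off the bound $\dim\Ker(L\circ\sigma)\geq 3|V|-(|E|-|V|)$. You instead absorb the conformal factor $u$ as an extra unknown, so your operator $\Phi$ maps $\field{R}^{3|V|}\oplus\field{R}^{|V|}$ onto all of $\field{R}^{|E|}$ rather than mapping $\field{R}^{3|V|}$ into the $(|E|-|V|)$-dimensional space $\Im(L)$; the two bookkeepings agree since the extra $|V|$ appears on both sides, and your annihilator computation of $\Phi^{*}$ reproduces exactly the conditions of Lemma \ref{lem:eqdef} (note that $\sum_j c_{ij}|df(e_{ij})|^2=0$ and $\sum_j c_{ij}(|f_j|^2-|f_i|^2)=0$ coincide in the presence of $\sum_j c_{ij}\,df(e_{ij})=0$, which is the identity in that lemma's proof). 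What your version buys is self-containedness: no length cross ratios, no skew-adjointness of $L$, and the isothermic characterization drops out of one adjoint computation. What the paper's version buys is the geometric byproduct recorded in Corollary \ref{cor:conflength} --- that isothermic realizations are precisely the points where the map to conformal equivalence classes fails to be a submersion --- which requires knowing that those classes are parametrized by $\Im(L)$, information your argument never produces. The only points worth spelling out in a final write-up are the two routine verifications you already flag: that $(0,u)\in\Ker\Phi$ forces $u\equiv 0$ (three edge relations per triangle), and that the defect of your equation is symmetric in $i$ and $j$ so that $\Phi$ is genuinely $\field{R}^{E}$-valued.
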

\begin{theorem}\label{thm:intangles}
	Suppose $f:V \to \mathbb{R}^3$ is a non-degenerate realization of a simply connected triangulated surface. Then $f$ is isothermic if and only if there exists an infinitesimal deformation that preserves the intersection angles of neighboring circumcircles and neighboring circumspheres but is not induced from M\"{o}bius transformations.
\end{theorem}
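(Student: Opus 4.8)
The plan is to prove Theorem \ref{thm:intangles} as the Möbius-geometric counterpart of Corollary \ref{cor:infiniterigidmean}, under the analogy in which the intersection angles of neighbouring circumcircles play the role of the induced metric (edge lengths) and the intersection angles of neighbouring circumspheres play the role of the integrated mean curvature. The natural framework is the light-cone model of Möbius geometry: I would lift each vertex $f_i\in\field{R}^3$ to a null vector in $\field{R}^{4,1}$, represent each circumsphere and circumcircle by the spacelike subspace it spans, and record the intersection angles as Lorentzian inner products of the associated unit spacelike vectors. In this model infinitesimal Möbius transformations are exactly the infinitesimal Lorentz transformations, and these manifestly preserve all such inner products. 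This settles one direction at once: a deformation induced from Möbius transformations preserves every circumcircle and circumsphere intersection angle, and it produces the trivial family that the theorem asks us to factor out.

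For the substantive direction I would linearize the angle-preservation equations. First I would show that preserving the intersection angles of neighbouring circumcircles forces the deformation to be conformal in the Möbius sense, the discrete avatar of the classical identity ``angle preservation $=$ conformality''. The key structural step is then to encode such a deformation by a face-based field, exactly as the infinitesimal isometric deformations in \eqref{eq:dihe} are encoded by the angular velocities $Z_{ijk}$: here each triangle carries an element of the Lorentz algebra, and the counterpart of the cross condition \eqref{eq:cross} states that the two elements attached to the faces sharing an interior edge $\{ij\}$ differ by a multiple of $df(e_{ij})$. Because the surface is simply connected, a closedness condition of the type \eqref{eq:closed} lets me integrate this face field back to an honest vertex deformation $\dot f$; simple connectivity is precisely what guarantees that the relevant dual $1$-form is not merely closed but exact, as in the passage from \eqref{eq:dihe} to a realizable $\dot f$.

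The heart of the proof is to show that imposing, in addition, the preservation of the circumsphere intersection angles is equivalent to the summed condition \eqref{eq:sum}. Concretely, I would compute the first variation of the angle between two neighbouring circumspheres meeting along a common circumcircle and show that summing these variations around an interior vertex reproduces $\sum_j \langle df(e_{ij}),\tau(e^*_{ij})\rangle$, just as the first variation of the dihedral angles reproduced $\dot H_i$ in Corollary \ref{cor:infiniterigidmean}. Combined with the previous step, this identifies the existence of a \emph{non-Möbius} angle-preserving deformation with the existence of a nonzero dual $1$-form $\tau$ satisfying \eqref{eq:closed}--\eqref{eq:sum}, i.e. with $f$ being isothermic in the sense of Definition \ref{def:isoth}. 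Möbius invariance of the entire setup (Theorem \ref{thm:mob}) guarantees that this correspondence intertwines the group of infinitesimal Möbius transformations with the trivial solutions, so that ``not induced from Möbius transformations'' matches exactly ``$\tau\not\equiv 0$''.

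I expect the main obstacle to be twofold. First, one must verify that the trivial angle-preserving deformations are \emph{exactly} the Möbius family and nothing more; equivalently, that a deformation preserving all circumcircle and circumsphere angles and inducing $\tau\equiv 0$ is necessarily an infinitesimal Möbius transformation. This is the Möbius analog of the rigidity statement that an infinitesimal isometric deformation with vanishing rotation field is a Euclidean motion, and it is where the strong non-degeneracy hypothesis on $f$ must be used. Second, and more delicate, is the bookkeeping for the extrinsic circumsphere terms: unlike the purely metric circumcircle angles, the circumsphere angles couple spheres sharing an entire circumcircle, and identifying their linearized contribution with the single scalar combination in \eqref{eq:sum} --- rather than with a stronger edge-by-edge vanishing that would destroy all nontrivial deformations --- is where the real computation lies.
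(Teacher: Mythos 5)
Your high-level framing (light-cone model, Möbius deformations as the trivial kernel) is reasonable, but the plan has two genuine gaps that sit exactly where the real work is. First, the step ``preserving the intersection angles of neighbouring circumcircles forces the deformation to be conformal'' conflates the two inequivalent notions of discrete conformality that this paper is careful to keep apart: circle-pattern conformality (the hypothesis of the theorem) and conformal equivalence of the induced metric (length cross ratios). These do not imply one another; indeed the content of the theorem lives precisely in the gap between them --- a deformation preserving both is an infinitesimal M\"obius transformation, so the interesting deformations are circle-pattern conformal but have $L(\sigma)\not\equiv 0$. The paper uses exactly this nonvanishing $L(\sigma)$ to \emph{define} the candidate dual $1$-form $\tau(e^*_{i\!j})=L(\sigma)_{i\!j}\,d\!f(e_{i\!j})/|d\!f(e_{i\!j})|^2$; your outline never produces a concrete $\tau$.

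Second, your division of labour is misaligned with where the difficulty actually is. With $\tau$ defined as above, condition \eqref{eq:cross} is immediate and condition \eqref{eq:sum} is automatic, because $L(\sigma)$ lies in the image of $L$ and hence sums to zero around every vertex --- no sphere angles are needed for \eqref{eq:sum}. The genuinely hard part is the vector-valued closedness \eqref{eq:closed}, $\sum_j\tau(e^*_{i\!j})=0$, and your proposal assigns the circumsphere angles to \eqref{eq:sum} instead, leaving \eqref{eq:closed} unaddressed. The paper's mechanism for \eqref{eq:closed} is an inversion centered at the vertex $f_0$: it sends the circumcircles through $f_0$ to the edges of the closed polygon $\tilde f_1,\dots,\tilde f_n$ formed by the inverted neighbours and the circumspheres to its osculating planes, so preservation of both families of angles means the polygon deforms by a rigid motion plus edge scalings $\tilde\sigma_{j,j+1}$ with $\tilde\sigma_{j,j+1}-\tilde\sigma_{j-1,j}=L(\sigma)_{0j}$; differentiating the closing condition $\sum\tilde\ell_{j,j+1}\tilde T_{j,j+1}=0$ then yields precisely $\sum_j\tau(e^*_{0j})=0$. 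Nothing in your outline plays the role of this closing identity, and the ``real computation'' you defer is exactly this step, so as it stands the proposal does not yet constitute a proof. (Minor points: the theorem assumes only non-degeneracy, not strong non-degeneracy; and the observation that M\"obius deformations preserve all angles is not one of the two directions of the equivalence, only the reason such deformations must be excluded.)
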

Note that Theorem \ref{thm:conformal} concerns the theory of conformal equivalence of triangle meshes \cite{Luo2004,Springborn2008} while Theorem \ref{thm:intangles} deals with the notion of circle patterns \cite{Schramm1997}.

In Section \ref{sec:quad} we show that our definition generalizes isothermic quadrilateral surfaces \cite{Bobenko1996}: Subdividing any isothermic quadrilateral surface in an arbitrary way we obtain an isothermic triangulated surface.

In Sections \ref{sec:cylinder}, \ref{sec:harm} and \ref{sec:inscribedmesh} we provide examples of isothermic triangulated surfaces that are not obtained via quadrilateral isothermic surfaces. Triangulated cylinders generated by discrete groups as well as certain planar triangular meshes and triangulated surfaces inscribed in a sphere are isothermic. 

In Section $\ref{sec:minimalsurf}$ we introduce discrete minimal surfaces via a discrete analogue of the Christoffel duality. Our discrete minimal surfaces are obtained as the reciprocal-parallel meshes for triangulated surfaces (with boundary) inscribed in the unit sphere. This approach mirrors the property that a smooth minimal surface is a Christoffel dual of its Gau{\ss} map. The Weierstrass data needed to construct a discrete minimal surface consist of a planar triangular mesh and a discrete harmonic function. Such harmonic functions were first introduced by NcNeal \cite{McNeal1946}. They were used in linear discrete complex analysis since Duffin \cite{Duffin1956} and have applications in statistical mechanics (see Smirnov \cite{Smirnov2010}).

In Section \ref{sec:smooth}, we review the smooth theory and prove some new theorems that are similar to discrete results established in earlier sections.

Throughout we use the language of discrete differential forms and quaternionic analysis as introduced by Desbrun et al. \cite{Desbrun2006a} and Pedit and Pinkall \cite{Pedit1998}.

\section{Notations} \label{sec:notations}
\begin{definition}
	A triangulated surface $M=(V,E,F)$ is a finite simplicial complex whose underlying topological space is a connected 2-manifold with boundary. The set of vertices (0-cells), edges (1-cells) and triangles (2-cells) are denoted as $V$, $E$ and $F$. 
\end{definition}

Without further notice we assume that all triangulated surfaces under consideration are oriented.

\begin{definition}
	A \emph{non-degenerate} realization of a triangulated surface $M$ in $\field{R}^3$ is a map $f:V \to \field{R}^3$ which is linear on each face and $f_i \neq f_j$ for every edge $\{i\!j\}\in E$. We say $f$ is \emph{strongly non-degenerate} if every face of $f$ spans an affine 2-plane.
\end{definition}

We denote $V_{int}$ and $E_{int}$ the set of interior vertices and the set of interior edges respectively. We write $e_{i\!j}$ as the oriented edge from the vertex $i$ to the vertex $j$. Note that $e_{i\!j} \neq e_{\!ji}$. The set of oriented edges is denoted by $\vv{E}$. The set of interior oriented edges is indicated by $\vv{E}_{int}$.

We recall some notions about discrete differential forms \cite{Desbrun2006a}. A (primal) \emph{1-form} $\omega: \vv{E} \to \field{R}$ is a function defined on oriented edges of $M$ such that
\[
\omega(e_{i\!j})=-\omega(e_{\!ji}).
\]
A 1-form $\omega$ is \emph{closed} if for every face $\{i\!jk\} \in F$
\[
\omega(e_{i\!j})+ \omega(e_{\!jk}) +\omega(e_{ki}) =0.
\] 
It is \emph{exact} if there exists $f:V \to \field{R}$ such that
\[
d\!f(e_{i\!j}) := f_{\!j} - f_i = \omega(e_{i\!j}).
\]
It is easy to check that exactness implies closedness while the converse holds if the discrete surface is simply connected.

Similarly we consider a 1-form $\tau: \vv{E}^*_{int} \to \field{R}$ on the dual cell decomposition $M^*=(V^*,E^*,F^*)$ of $M$ and call $\tau$ a \emph{dual 1-form} on $M$. Here we denote $e_{i\!j}^*$ the dual edge oriented from the right face of $e_{i\!j}$ to the left face. The following notions are natural if we think of a dual 1-form on $M$ as a 1-form on $M^*$. 
A dual 1-form $\tau$ is \emph{closed} if for ever interior vertex $i \in V$ 
\[
\sum_j \tau(e^*_{i\!j}) = 0.
\] 
It is exact if there exists $Z:F \to \field{R}$ such that
\[
dZ(e^*_{i\!j}):= Z_{i\!jk}-Z_{\!jil} = \tau(e^*_{i\!j})
\] 
where $\{i\!jk\}$ denotes the left face of $e_{i\!j}$ and $\{\!jil\}$ denotes the right face.

We distinguish dual 1-forms from primal 1-forms for the following reasons. Firstly, the closedness conditions are different. The closedness conditions are imposed on faces for primal 1-forms while they are imposed at vertices for dual 1-forms. Secondly, a discrete notion of the Hodge star operator is needed to identify 1-forms with dual 1-forms, although it is not explicitly used in this paper. In Discrete Exterior Calculus \cite{Desbrun2006a} one often uses the Hodge star operator, which maps a primal 1-form $\omega$ to a dual 1-form $*\omega$ via
\[
*\omega(e_{i\!j}^*) := (\cot \beta_{i\!j}^{k}+\cot \beta_{i\!j}^{l}) \omega((e_{i\!j}^*)^*) = -(\cot \beta_{i\!j}^{k}+\cot \beta_{i\!j}^{l}) \omega(e_{i\!j}) \quad \forall \{i\!j\} \in E
\]
where $\beta_{i\!j}^k \in (-\pi,\pi)$ denotes the angle $\angle jki$ of the triangle $\{i\!jk\}$ with respect to some discrete metric, i.e. an assignment of edge lengths. Given a dual 1-form $\tau$ and a primal 1-form $d\!f$, we will occasionally write 
\[
\tau(e_{i\!j}^*) = k_{ij} d\!f(e_{i\!j}) 
\]
for some $k:E_{int} \to \field{R}$. Here we think of it as $\tau = k' *d\!f$ for some $k':E_{int} \to \field{R}$.  

\section{M\"{o}bius invariance} \label{sec:mob}
In this section we prove that the class of isothermic triangulated surfaces is invariant under M\"{o}bius transformations.

Given a triangulated surface $f:V \to \field{R}^3$ and a M\"{o}bius transformation $\sigma: \field{R}^3 \cup \{ \infty \} \to \field{R}^3 \cup \{ \infty \} $, we define $\sigma\circ f:V \to \field{R}^3$ as the triangulated surface with vertices $(\sigma\circ f)_i := \sigma\circ f_i$. We consider only the M\"{o}bius transformations that do not map any vertex to infinity. 

Taking $\sigma$ to be minus the inversion in the unit sphere, we obtain a triangulated surface 
\[
\sigma \circ f = -\frac{f}{||f||^2}:=f^{-1} .
\]
Later we will identify $\field{R}^3$ with imaginary quaternions, which explains the notation $f^{-1}$. We are going to show that $f$ is isothermic if and only if $f^{-1}$ is isothermic. We first rewrite the equations from Definition \ref{def:isoth}.

\begin{lemma} \label{lem:eqdef}
	Given a non-degenerate realization $f:V \to \field{R}^3$ of a triangulated surface, a $\field{R}^3$-valued dual 1-form $\tau: \vv{E}^*_{int} \to \field{R}^3$ satisfies
	\begin{align*}
		\sum_{j} \tau(e^*_{i\!j}) &= 0 \quad \forall i \in V_{int}, \\
		d\!f(e_{i\!j}) \times \tau(e^*_{i\!j}) &= 0  \quad \forall \{i\!j\} \in E_{int}, \\
		\sum_{j} \langle d\!f(e_{i\!j}), \tau(e^*_{i\!j}) \rangle &=0 \quad \forall i \in V_{int} 
	\end{align*}
	if and only if there exists $k:E_{int} \to \field{R}$ such that
	\begin{align*}
		k_{i\!j} d\!f(e_{i\!j}) &= \tau(e^*_{i\!j}) \quad \forall \{i\!j\} \in E_{int}, \\
		\sum_{j} k_{i\!j} d\!f(e_{i\!j}) &=0 \quad  \forall i \in V_{int}, \\
		\sum_{j} k_{i\!j} (|f_j|^2 - |f_i|^2) &=0 \quad \forall i \in V_{int}.
	\end{align*}
\end{lemma}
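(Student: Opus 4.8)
The plan is to treat the cross-product condition \eqref{eq:cross} as the bridge between the two formulations and then to reconcile the two versions of the last equation by a polarization identity. First I would observe that since $f$ is non-degenerate, $d\!f(e_{i\!j}) = f_j - f_i \neq 0$ for every edge, so in $\field{R}^3$ the equation $d\!f(e_{i\!j}) \times \tau(e^*_{i\!j}) = 0$ holds if and only if $\tau(e^*_{i\!j})$ is a scalar multiple of $d\!f(e_{i\!j})$. This lets me define $k_{i\!j} \in \field{R}$ by $\tau(e^*_{i\!j}) = k_{i\!j}\, d\!f(e_{i\!j})$. I would then check that $k$ descends to a function on unoriented edges $E_{int}$: both $\tau$ and $d\!f$ change sign under reversal of orientation, since $\tau(e^*_{\!ji}) = -\tau(e^*_{i\!j})$ and $d\!f(e_{\!ji}) = -d\!f(e_{i\!j})$, so $k_{\!ji} = k_{i\!j}$. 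Conversely, any such $k$ defines a $\field{R}^3$-valued dual 1-form via $\tau(e^*_{i\!j}) := k_{i\!j}\, d\!f(e_{i\!j})$ that automatically satisfies \eqref{eq:cross}.

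With this substitution in hand, the closedness condition \eqref{eq:closed}, namely $\sum_j \tau(e^*_{i\!j}) = 0$, becomes verbatim the second equation $\sum_j k_{i\!j}\, d\!f(e_{i\!j}) = 0$ at every interior vertex. Likewise the sum condition \eqref{eq:sum} becomes $\sum_j \langle d\!f(e_{i\!j}), k_{i\!j}\, d\!f(e_{i\!j})\rangle = \sum_j k_{i\!j}\, |d\!f(e_{i\!j})|^2 = 0$. So after the substitution the only remaining point is to show that, in the presence of the closedness relation, the identity $\sum_j k_{i\!j}\,|d\!f(e_{i\!j})|^2 = 0$ is equivalent to $\sum_j k_{i\!j}\,(|f_j|^2 - |f_i|^2) = 0$.

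The crux, then, is the algebraic identity
\[
|d\!f(e_{i\!j})|^2 = |f_j - f_i|^2 = (|f_j|^2 - |f_i|^2) - 2\langle f_i, d\!f(e_{i\!j})\rangle,
\]
which I would verify by expanding the square. Summing over $j$ against the weights $k_{i\!j}$ gives
\[
\sum_j k_{i\!j}\,|d\!f(e_{i\!j})|^2 = \sum_j k_{i\!j}\,(|f_j|^2 - |f_i|^2) - 2\Big\langle f_i, \sum_j k_{i\!j}\, d\!f(e_{i\!j})\Big\rangle .
\]
Under the closedness relation $\sum_j k_{i\!j}\, d\!f(e_{i\!j}) = 0$ the last term vanishes, so the two sums agree at every interior vertex and the two forms of the final equation are interchangeable. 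This step is the only substantive one, and it is less an obstacle than the observation that makes the whole equivalence work: expressing the last condition through $|f_j|^2 - |f_i|^2$ rather than $|d\!f(e_{i\!j})|^2$ is precisely what will later render the M\"obius (inversion) invariance transparent. Reading the identity in both directions yields both implications of the stated equivalence.
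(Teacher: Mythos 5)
Your proof is correct and takes essentially the same route as the paper: the cross-product condition plus non-degeneracy yields $\tau(e^*_{i\!j})=k_{i\!j}\,d\!f(e_{i\!j})$, and the polarization identity $|f_j-f_i|^2=(|f_j|^2-|f_i|^2)-2\langle f_i,\,f_j-f_i\rangle$ combined with the closedness relation shows the two forms of the last condition agree. The paper's proof is just a terser version of the same argument, stating only the key identity and leaving the remaining verifications to the reader.
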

\begin{proof}
	Suppose $k:E_{int} \to \field{R}$ satisfies for every interior vertex $i$
	\[
	\sum_{j} k_{i\!j} d\!f(e_{i\!j}) =0.
	\]
	Then, we have the identity
	\begin{align*}
		\sum_{j} \langle d\!f(e_{i\!j}), k_{i\!j} d\!f(e_{i\!j}) \rangle
		= \sum_{j} k_{i\!j} (|f_j|^2 - |f_i|^2 -2 \langle f_j - f_i, f_i \rangle)
		= \sum_{j} k_{i\!j} (|f_j|^2 - |f_i|^2).
	\end{align*}
	Using this it is easy to verify all our claims. 
\end{proof}
With the above lemma, we can show $f$ is isothermic if and only if $f^{-1}$ is isothermic.
\begin{lemma}\label{lem:trans}
	Suppose a non-degenerate realization $f:V \to \field{R}^3$ of a triangulated surface is isothermic with a non-trivial dual 1-form $\tau$ satisfying Definition \ref{def:isoth}. We write 
	\[
	\tau(e^*_{i\!j}) = k_{i\!j} d\!f(e_{i\!j})
	\]	
	for some $k: E_{int} \to \field{R}$. Then, the triangulated surface $f^{-1}:V \to  \field{R}^3$ is isothermic with corresponding dual 1-form
	\[
	\tilde{\tau}(e^*_{i\!j}) := k_{i\!j} |f_i|^2 |f_j|^2 d\!f^{-1}(e_{i\!j}).
	\]
\end{lemma}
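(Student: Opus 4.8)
The plan is to verify, by means of the characterization in Lemma \ref{lem:eqdef}, that the proposed form $\tilde\tau$ together with the factor $\tilde k_{ij} := k_{ij}|f_i|^2|f_j|^2$ satisfies the three defining equations for the realization $f^{-1}$. By construction $\tilde\tau(e^*_{ij}) = \tilde k_{ij}\,df^{-1}(e_{ij})$ is a real multiple of $df^{-1}(e_{ij})$, so the cross-product equation $df^{-1}(e_{ij})\times\tilde\tau(e^*_{ij})=0$ is automatic. Hence everything reduces to checking the two sum conditions
\[
\sum_j \tilde k_{ij}\,df^{-1}(e_{ij}) = 0, \qquad \sum_j \tilde k_{ij}\bigl(|f^{-1}_j|^2 - |f^{-1}_i|^2\bigr) = 0
\]
at every interior vertex $i$, where by hypothesis we know the corresponding two identities hold for $f$ and $k$.

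The key tool is the quaternionic inversion formula. Working in the imaginary quaternions, where $f_i^{-1} = -f_i/|f_i|^2$, $|f^{-1}_i|^2 = 1/|f_i|^2$, and every vertex satisfies $f_i\neq 0$ since $\sigma$ maps no vertex to infinity, I would first record the identity
\[
df^{-1}(e_{ij}) = f_j^{-1} - f_i^{-1} = -f_j^{-1}\,df(e_{ij})\,f_i^{-1},
\]
which follows from $f_j^{-1}(f_j-f_i)f_i^{-1} = f_i^{-1}-f_j^{-1}$. Multiplying by $|f_i|^2|f_j|^2$ and using $|f_i|^2 f_i^{-1} = -f_i$ then collapses the deformed form into the clean expression $\tilde\tau(e^*_{ij}) = -k_{ij}\,f_j\,df(e_{ij})\,f_i$, which is the form I expect to drive the whole computation.

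With this in hand the length condition is immediate: since $|f^{-1}_j|^2 - |f^{-1}_i|^2 = |f_j|^{-2} - |f_i|^{-2}$, the factor $|f_i|^2|f_j|^2$ cancels and the sum becomes $-\sum_j k_{ij}(|f_j|^2 - |f_i|^2)$, which vanishes by the third equation for $f$. The closedness condition is the step I expect to require the most care. After factoring the nonzero $f_i$ out on the right, $\sum_j\tilde\tau(e^*_{ij})=0$ reduces to $\sum_j k_{ij}\,f_j\,df(e_{ij}) = 0$; substituting $f_j = f_i + df(e_{ij})$ splits this into $f_i\sum_j k_{ij}\,df(e_{ij})$, which vanishes by the first equation for $f$, plus the residual term $\sum_j k_{ij}\,df(e_{ij})^2$. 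The crucial observation is that $df(e_{ij})$ is purely imaginary, so $df(e_{ij})^2 = -|df(e_{ij})|^2$, whence the residual equals $-\sum_j k_{ij}|df(e_{ij})|^2 = -\sum_j k_{ij}(|f_j|^2 - |f_i|^2)$, which is again zero by the third equation for $f$ (using the identity derived in the proof of Lemma \ref{lem:eqdef}). Finally, $\tilde\tau$ is not identically zero because $k$ is not and the scalar factors $|f_i|^2|f_j|^2$ never vanish, so $f^{-1}$ is isothermic with dual $1$-form $\tilde\tau$.
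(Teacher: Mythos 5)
Your proof is correct and follows essentially the same route as the paper: both verify the conditions of Definition \ref{def:isoth} for $f^{-1}$ via the reformulation in Lemma \ref{lem:eqdef} and reduce them to the corresponding identities for $f$ and $k$. The only difference is organizational --- you run the closedness computation through the quaternionic factorization $\tilde\tau(e^*_{i\!j}) = -k_{i\!j}\, f_j\, df(e_{i\!j})\, f_i$ (precisely the simplification the paper flags in Remark \ref{rmk:spin}) together with $df(e_{i\!j})^2 = -|df(e_{i\!j})|^2$, whereas the paper performs the equivalent vector-algebra splitting $|f_j|^2 f_i - |f_i|^2 f_j = f_i\bigl(|f_j|^2 - |f_i|^2\bigr) + |f_i|^2\bigl(f_i - f_j\bigr)$ directly.
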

\begin{proof}
	We check that $\tilde{\tau}$ satisfies the equations in Definition \ref{def:isoth} by applying the previous lemma. Firstly for every interior vertex $i$
	\begin{align*}
		\sum_{j} \tilde{\tau}(e^*_{i\!j}) =& \sum_{j} k_{i\!j} |f_i|^2 |f_{\!j}|^2 d\!f^{-1}(e_{i\!j}) \\
		=& \sum_{j} \big(k_{i\!j} |f_i|^2 |f_{\!j}|^2 \frac{f_i}{|f_i|^2} - k_{i\!j} |f_i|^2 f_i + k_{i\!j} |f_i|^2 f_i  - k_{i\!j} |f_i|^2 |f_{\!j}|^2 \frac{f_{\!j}}{|f_{\!j}|^2} \big) \\
		=& f_i \sum_{j} k_{i\!j} (|f_{\!j}|^2 - |f_i|^2) \, + |f_i|^2 \sum_{j} k_{i\!j} (f_i - f_{\!j}) \\
		=& 0. 
	\end{align*}
	Secondly, for every interior vertex $i$
	\begin{align*}
		\sum_{j} k_{i\!j}|f_i|^2 |f_{\!j}|^2 (|f^{-1}_{\!j}|^2 - |f^{-1}_i|^2) = \sum_{j} k_{i\!j} (|f_i|^2 - |f_{\!j}|^2) 	= 0.
	\end{align*} 
	Hence, $f^{-1}$ is isothermic with 1-form $\tilde{\tau}$ satisfying Definition \ref{def:isoth}.	
\end{proof}

\begin{proof}[Proof of Theorem \ref{thm:mob}]
	It follows from the previous lemma and the fact that M\"{o}bius transformations are generated by inversions and Euclidean transformations. 
\end{proof}

\begin{remark}\label{rmk:spin}
	The above calculation can be simplified if written in terms of quaternions. Identifying the Euclidean 3-space with the space of purely imaginary quaternions we obtain
	\[
	d\!f^{-1}(e_{i\!j}) = \bar{f}_i^{-1} d\!f(e_{i\!j}) f_{\!j}^{-1} = \bar{f}_{\!j}^{-1} d\!f(e_{i\!j}) f_i^{-1}
	\]
	and
	\[
	\tilde{\tau}(e^*_{i\!j})= f_i \tau(e^*_{i\!j}) \bar{f_{\!j}}= f_{\!j} \tau(e^*_{i\!j}) \bar{f_i}.
	\]
	These two formulas are similar to the smooth case \cite{Richter1997}.
\end{remark}

Lemma \ref{lem:eqdef} provides another characterization of isothermic triangulated surfaces. We consider the light cone
\[
L:=\{ x\in \field{R}^5\,|\, x_1^2 + x_2^2 +x_3^2 +x_4^2 -x_5^2 =0\}.
\]

\begin{corollary}
	Suppose $f:V \to \field{R}^3$ is a non-degenerate realization of a triangulated surface and $k:E_{int} \to \field{R}$ is a function. Then $f$ is isothermic with corresponding dual 1-form $\tau$ defined by 
	\[
	\tau(e_{i\!j}^*) = k_{i\!j} d\!f(e_{i\!j})  \quad \forall \{i\!j\} \in E_{int}
	\]
	if and only if for every interior vertex $i$
	\begin{equation}\label{eq:selfstress}
		\sum_{j} k_{i\!j} d\hat{f}(e_{i\!j}) =0 \quad \forall i \in V_{int}
	\end{equation}
	where $\hat{f}:V \to L \subset \field{R}^5$ is the lift of $f$ to $\field{R}^5$ defined by
	\[
	\hat{f}_i := (f_i, \frac{1-|f_i|^2}{2},\frac{1+|f_i|^2}{2}) \in L \subset \field{R}^5.
	\]
\end{corollary}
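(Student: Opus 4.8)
The plan is to reduce the statement to a direct computation and then invoke Lemma \ref{lem:eqdef}. The key observation is that the fourth and fifth coordinates of the light-cone lift $\hat{f}$ encode precisely the quantity $|f_i|^2$ that enters the third condition of Lemma \ref{lem:eqdef}, so that the single $\field{R}^5$-valued closedness equation \eqref{eq:selfstress} simultaneously packages conditions \eqref{eq:closed} and \eqref{eq:sum}.

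First I would compute the difference $d\hat{f}(e_{i\!j}) = \hat{f}_j - \hat{f}_i$ coordinatewise. By definition of $\hat{f}$, its first three coordinates give $d\!f(e_{i\!j})$, while the last two coordinates are $\tfrac{1}{2}(|f_i|^2 - |f_j|^2)$ and $\tfrac{1}{2}(|f_j|^2 - |f_i|^2)$ respectively, so that
\[
d\hat{f}(e_{i\!j}) = \Big( d\!f(e_{i\!j}),\; -\tfrac{1}{2}(|f_j|^2 - |f_i|^2),\; \tfrac{1}{2}(|f_j|^2 - |f_i|^2) \Big).
\]
Multiplying by $k_{i\!j}$ and summing over the neighbors $j$ of an interior vertex $i$, the resulting vector in $\field{R}^5$ vanishes if and only if both of its blocks vanish, that is, if and only if
\[
\sum_j k_{i\!j}\, d\!f(e_{i\!j}) = 0 \qquad \text{and} \qquad \sum_j k_{i\!j}\,(|f_j|^2 - |f_i|^2) = 0,
\]
the two scalar equations extracted from the last two coordinates being identical up to sign. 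These are exactly the second and third conditions of Lemma \ref{lem:eqdef}.

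Finally I would note that the hypothesis $\tau(e_{i\!j}^*) = k_{i\!j}\, d\!f(e_{i\!j})$ automatically satisfies the cross-product condition $d\!f(e_{i\!j}) \times \tau(e_{i\!j}^*) = 0$ and reproduces the first condition of Lemma \ref{lem:eqdef}. Hence, by that lemma, $f$ is isothermic with this dual 1-form $\tau$ precisely when the two displayed equations above hold, which is precisely the condition $\sum_j k_{i\!j}\, d\hat{f}(e_{i\!j}) = 0$ at every interior vertex. I do not expect a genuine obstacle here: the entire content is the identification of the two light-cone coordinates with the term $|f_j|^2 - |f_i|^2$, and the only point demanding a moment's care is checking that the two equations read off from the $\field{R}^5$ identity match conditions \eqref{eq:closed} and \eqref{eq:sum} of Lemma \ref{lem:eqdef} verbatim.
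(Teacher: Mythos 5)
Your proposal is correct and is exactly the argument the paper intends: the corollary is stated as an immediate consequence of Lemma \ref{lem:eqdef}, and your coordinatewise computation of $d\hat{f}(e_{i\!j})$ showing that the last two entries both reduce to $\pm\tfrac{1}{2}(|f_j|^2-|f_i|^2)$ is precisely the identification that makes the equivalence clear. The only cosmetic caveat, inherited from the paper's own statement, is that one should additionally assume $k$ is not identically zero so that $\tau$ satisfies the non-triviality requirement of Definition \ref{def:isoth}.
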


A function $k:E_{int} \to \field{R}$ satisfying Equation \eqref{eq:selfstress} is called a \emph{self-stress} of $\hat{f}$.

It is known that the M\"{o}bius geometry of $\field{R}^3 \cup \{ \infty \}$ is a subgeometry of the projective geometry of $\field{R}P^4$. M\"{o}bius transformations of $\field{R}^3 \cup \{ \infty \}$ are represented as projective transformations of $\field{R}P^4$ preserving the quadric defined by the light cone $L$. If two non-degenerate realizations are related by a projective transformation, then the spaces of self-stresses of the two realizations are isomorphic \cite{Izmestiev2008}. Hence, we obtain another proof of Theorem \ref{thm:mob}.

\section{Infinitesimal conformal deformations} \label{sec:infcon}
We consider infinitesimal conformal deformations for a given closed triangulated surface in space. We show that a surface is isothermic if and only if it is a singular point in the space of all surfaces conformally equivalent to the original one.

\subsection{Conformal equivalence of triangle meshes} \label{sec:conformalequi}

We recall that a \emph{discrete metric} of a triangulated surface is a function $\ell:E \to \field{R}_{+}$ satisfying the triangle inequality on every face.
A non-degenerate realization $f:V \to \field{R}^3$ induces a  discrete metric $\ell:E \to \field{R}_{+}$ via
\begin{align*}
	\quad \ell_{i\!j} := |f_{\!j} - f_i| \quad \forall \{i\!j\} \in E.
\end{align*}

	\begin{definition}[\cite{Luo2004}]
		Two discrete metrics $\ell,\tilde{\ell}:E \to \field{R}_{+}$ on a triangulated surface $M$ are conformally equivalent if there exists $u:V\rightarrow \field{R}$ such that for every edge $\{i\!j\}$
		\[
		\tilde{\ell}_{i\!j}=e^{\frac{u_{i}+u_{j}}{2}}\ell_{i\!j}.
		\]
		Two non-degenerate realizations $f,\tilde{f}:V \to \field{R}^3$ are conformally equivalent if their induced discrete metrics are conformally equivalent.
	\end{definition}
It leads naturally to an infinitesimal version of conformal deformations.
\begin{definition}
	An infinitesimal deformation of a non-degenerate triangulated surface $f:V \to \field{R}^3$ is a map $\dot{f}: V \to \field{R}^3$. It is \emph{conformal} if there exists $u:V \to \field{R}$ such that the change of the induced discrete metric $\dot{\ell}:E\to \field{R}$ satisfies for every edge $\{ij\}$
	\[
	\dot{\ell}_{i\!j} = \frac{u_i+u_j}{2} \ell_{i\!j}. 
	\]
	In particular, $\dot{f}$ is an infinitesimal isometric deformation if $u \equiv 0$.
\end{definition} 

The conformal equivalence class of a triangulated surface in Euclidean space is M\"{o}bius invariant \cite{Bobenko2010}. It can be distinguished via logarithmic length cross ratios.

\begin{definition}
	Given a discrete metric $\ell:E \to \field{R}_{+}$ on a triangulated surface, its \emph{logarithmic length cross ratio} $\log \lcrs : \field{R}^{|E|} \to \field{R}^{|E_{int}|}$ is defined by
	\[
	\log \lcrs(\ell)_{i\!j} := \log \ell_{\!jk} -\log \ell_{ki}+\log \ell_{il}-\log \ell_{l\!j} \quad \forall \{i\!j\}\in E_{int}
	\]
	where $\{i\!jk\}$ is the left face of $e_{i\!j}$ and $\{\!jil\}$ is the right face.
\end{definition}

	\begin{theorem}[\cite{Bobenko2010}]
		Two discrete metrics $\ell$ and $\tilde{\ell}$ on a triangulated surface are conformally equivalent if and only if
		\[
		\log \lcrs(\ell) \equiv \log \lcrs(\tilde{\ell}).
		\]
	\end{theorem}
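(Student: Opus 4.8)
The plan is to pass to logarithmic coordinates and recognize the statement as an exactness assertion for a short sequence of linear maps. Write $\lambda_{i\!j} := \log \ell_{i\!j}$ and $\tilde\lambda_{i\!j} := \log \tilde\ell_{i\!j}$, and set $\mu := \tilde\lambda - \lambda \in \field{R}^{E}$, a function on unoriented edges. Conformal equivalence then reads: there is $u:V \to \field{R}$ with $\mu_{i\!j} = (u_i+u_{\!j})/2$ for every edge $\{i\!j\}$. Introduce the averaging operator $G:\field{R}^V \to \field{R}^E$, $(Gu)_{i\!j} = (u_i+u_{\!j})/2$, and the cross-ratio operator $C:\field{R}^E \to \field{R}^{E_{int}}$, $(C\nu)_{i\!j} = \nu_{\!jk}-\nu_{ki}+\nu_{il}-\nu_{l\!j}$ (with $\{i\!jk\}$ the left and $\{\!jil\}$ the right face of $e_{i\!j}$). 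Since $\log \lcrs = C \circ \log$, the theorem is equivalent to the single linear identity $\Ker C = \Im G$.

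The forward inclusion $\Im G \subseteq \Ker C$ is a direct telescoping computation: substituting $\mu_{i\!j}=(u_i+u_{\!j})/2$ into $(C\mu)_{i\!j}$ yields
\[
\tfrac{u_{\!j}+u_k}{2}-\tfrac{u_k+u_i}{2}+\tfrac{u_i+u_l}{2}-\tfrac{u_l+u_{\!j}}{2}=0,
\]
so every conformally equivalent pair shares the same logarithmic length cross ratio. This settles the ``only if'' direction.

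For the converse I would reconstruct $u$ locally and then glue. Suppose $\mu \in \Ker C$. On a single triangle $\{i\!jk\}$ the three equations $u_i+u_{\!j}=2\mu_{i\!j}$, $u_{\!j}+u_k=2\mu_{\!jk}$, $u_k+u_i=2\mu_{ki}$ form a nonsingular $3\times 3$ system, with unique solution $u_i^{\{i\!jk\}}=\mu_{i\!j}-\mu_{\!jk}+\mu_{ki}$ and cyclic analogues; thus each face assigns a candidate value to each of its vertices. The key point is that the condition $\mu \in \Ker C$ is exactly the compatibility needed for these candidates to agree across a shared edge: comparing the values that the two faces $\{i\!jk\}$ and $\{\!jil\}$ adjacent to $e_{i\!j}$ assign to the common vertices $i$ and $j$, the differences are precisely $\pm (C\mu)_{i\!j}$, hence vanish.

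It remains to propagate this edge-wise agreement into a single globally defined $u:V \to \field{R}$, and this is the step I expect to require the most care. Around any interior vertex the incident triangles form a connected fan whose consecutive members share an edge through that vertex; edge-wise consistency together with connectedness of the surface then forces all faces in the star to assign the same number to the vertex, so $u$ is well defined. I want to stress that no holonomy or topological obstruction can arise here, because the gluing prescribes the \emph{absolute} value $u_i$ at each vertex rather than increments along paths; hence the argument is insensitive to the genus and applies verbatim to closed surfaces. With $u$ in hand, $\mu_{i\!j}=(u_i+u_{\!j})/2$ holds by construction, giving $\tilde\ell_{i\!j}=e^{(u_i+u_{\!j})/2}\ell_{i\!j}$ and completing the ``if'' direction.
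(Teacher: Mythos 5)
Your proof is correct and follows essentially the same route the paper uses for the analogous linearized statement (the characterization of $\Ker(L)$ in Section 4.2): define the candidate value $u_i=\mu_{i\!j}+\mu_{ki}-\mu_{\!jk}$ on each face, observe that vanishing of the logarithmic length cross ratio is exactly the compatibility across a shared interior edge, and glue using the fact that the link of each vertex is a fan. The paper itself only cites this theorem from \cite{Bobenko2010}, but your argument is the standard one and matches the computation the authors carry out for $\Ker(L)$, including your (correct) observation that no holonomy issue arises since absolute values rather than increments are prescribed.
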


	\begin{corollary}[\cite{Bobenko2010}]
		The dimension of the space of the conformal equivalence classes of a triangulated surface is $|E|-|V|$.
	\end{corollary}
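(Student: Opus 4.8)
The plan is to pass to logarithmic length coordinates and reduce the statement to a rank computation for a single linear map. Writing $x_{i\!j} := \log \ell_{i\!j}$, the space of discrete metrics becomes an open subset $U \subset \field{R}^{|E|}$, the openness coming from the triangle inequalities, and the logarithmic length cross ratio becomes the restriction to $U$ of the linear map $\Phi: \field{R}^{|E|} \to \field{R}^{|E_{int}|}$ given by $\Phi(x)_{i\!j} = x_{\!jk} - x_{ki} + x_{il} - x_{l\!j}$. By the preceding theorem, two metrics are conformally equivalent precisely when they have the same image under $\Phi$; hence $\Phi$ descends to an injection from the set of conformal equivalence classes into $\field{R}^{|E_{int}|}$, whose image is $\Phi(U)$. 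Since $U$ is open and $\Phi$ is linear, $\Phi(U)$ is an open subset of the subspace $\operatorname{im}\Phi$, so its dimension equals $\rank\Phi$. Thus it suffices to show $\rank\Phi = |E| - |V|$.

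By rank--nullity this is equivalent to $\dim \ker\Phi = |V|$, and the heart of the argument is to identify $\ker\Phi$. In logarithmic coordinates a conformal change of metric is exactly the replacement $x_{i\!j} \mapsto x_{i\!j} + \tfrac{u_i+u_j}{2}$ for some $u: V \to \field{R}$. Since the preceding theorem characterizes conformal equivalence as equality of the cross ratio, i.e.\ as equality under $\Phi$, the kernel of $\Phi$ is precisely the image of the linear map $C: \field{R}^{|V|} \to \field{R}^{|E|}$, $C(u)_{i\!j} = \tfrac{u_i+u_j}{2}$. (That $\Phi \circ C = 0$ can also be seen directly, since each vertex weight appears in the four-term cross ratio with cancelling signs; the content of the theorem is that nothing else lies in the kernel.) Therefore $\dim\ker\Phi = \dim\operatorname{im} C = |V| - \dim\ker C$.

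It remains to show that $C$ is injective. Suppose $C(u)=0$, so that $u_i + u_j = 0$ on every edge. For any triangle $\{i\!jk\} \in F$ this gives $u_i + u_j = u_j + u_k = u_k + u_i = 0$, forcing $u_i = u_j = u_k = 0$; since $M$ is connected, propagating the relation $u_m = -u_i$ along edges yields $u \equiv 0$. Equivalently, the $1$-skeleton of a triangulated surface contains triangles, hence odd cycles, so it is not bipartite, and the only $u$ with $u_i + u_j = 0$ along all edges is the zero function. Hence $\dim\ker\Phi = |V|$ and $\rank\Phi = |E| - |V|$, which is the asserted dimension.

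The step I expect to be the only real obstacle---and the only place where the triangulated structure, as opposed to that of an arbitrary graph, is used---is the injectivity of $C$: it is precisely the existence of triangular faces, i.e.\ odd cycles, that rules out the bipartite degeneracy and pins the nullity of $\Phi$ at $|V|$ rather than $|V|-1$ (which would give $|E|-|V|+1$). Once the preceding theorem is invoked to replace conformal equivalence by the single linear condition $\Phi(x)=\Phi(\tilde x)$, all remaining steps are routine linear algebra and bookkeeping.
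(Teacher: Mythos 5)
Your argument is correct, and its linear-algebra skeleton (dimension $=$ rank of the cross-ratio map $=|E|-\dim(\text{kernel})$, kernel $=$ the space of conformal scalings, of dimension $|V|$) is the same as the one underlying the paper. The paper itself imports this corollary from the reference without proof; the closest in-paper argument is the lemma in Section \ref{sec:infindeform} computing $\Ker(L)$ and $\Im(L)$ for the \emph{infinitesimal} version $L$ of the cross-ratio map, where $\rank(L)=|E|-\dim\Ker(L)=|E|-|V|$. The two routes differ in how the kernel is pinned down. You deduce $\ker\Phi=\operatorname{im}C$ from the quoted equivalence theorem (note that to get $\ker\Phi\subseteq\operatorname{im}C$ you should say explicitly that for $v\in\ker\Phi$ one applies the theorem to the pair $x$ and $x+\epsilon v$, both in $U$ for small $\epsilon$ by openness --- this is implicit but not stated in your write-up), and then you supply the injectivity of $u\mapsto\tfrac{1}{2}(u_i+u_j)$ via the odd-cycle/triangle observation. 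The paper's lemma instead proves the hard inclusion constructively: given $a$ with $L(a)=0$ it defines $u_i:=\tfrac{1}{2}(a_{i\!j}+a_{ki}-a_{\!jk})$ on each face and checks consistency across adjacent faces using that vertex links are fans; the injectivity of the parametrization, which you rightly identify as the step where the triangulated (non-bipartite) structure enters, is left implicit there. Your version buys a cleaner global statement at the cost of using the equivalence theorem as a black box; the paper's version is self-contained at the infinitesimal level and is what is actually needed later for the isothermic characterization.
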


\subsection{Infinitesimal deformations}\label{sec:infindeform}

In this section, we consider closed triangulated surfaces. Suppose $\ell:E \to \field{R}_{+}$ is a discrete metric on a closed triangulated surface. We consider an infinitesimal change of the discrete metric $\dot{\ell}$ and write it as $\dot{\ell}= \sigma \ell$ for some infinitesimal scaling $\sigma:E \to \field{R}$. Then the change of logarithmic length cross ratio on edge $\{ij\}$ is given by
\[
(\log \lcrs(\ell))^{\LargerCdot}_{i\!j}= \sigma_{jk} - \sigma_{ki} + \sigma_{il} - \sigma_{l\!j} =: L(\sigma)_{i\!j}.
\]
The image of the linear map $L:\field{R}^{|E|} \to \field{R}^{|E|}$ is the tangent space of the space of conformal equivalence classes (which is the same space at all discrete metrics).

\begin{lemma}
	Given a closed triangulated surface. The operator $L$ is skew adjoint with respect to the standard product $(\phantom{a},\phantom{a})$ on $\field{R}^{|E|}$ given by $(a,b):=\sum_{\{i\!j\} \in E} a_{i\!j}b_{i\!j}$ for any $ a,b \in \field{R}^{|E|}$.
\end{lemma}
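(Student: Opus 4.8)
The plan is to prove skew-adjointness by localizing the pairing $(L(\sigma),\rho)$ to a sum over triangles and observing that each triangle's contribution is already antisymmetric in $\sigma$ and $\rho$. Concretely, I want to show $(L(\sigma),\rho) + (\sigma, L(\rho)) = 0$ for all $\sigma, \rho \in \field{R}^{|E|}$. The first thing I would exploit is that the surface is closed, so that every edge is interior and lies on exactly two triangles; this is precisely where the hypothesis enters, since for a surface with boundary the same reorganization would leave uncancelled boundary terms. Writing out
\[
(L(\sigma),\rho) = \sum_{\{i\!j\}\in E} (\sigma_{jk} - \sigma_{ki} + \sigma_{il} - \sigma_{l\!j})\,\rho_{i\!j},
\]
each of the four terms attached to $\{i\!j\}$ is (up to sign) a product of $\rho_{i\!j}$ with the value of $\sigma$ on one of the remaining edges of the two adjacent triangles $\{i\!jk\}$ and $\{\!jil\}$.

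I would then reorganize the whole double sum according to which triangle produces each term. Since $L(\sigma)_{i\!j}$ collects exactly the contributions of the two triangles sharing $\{i\!j\}$, and each triangle meets three edges, a fixed triangle $T = \{abc\}$ (labelled counterclockwise) contributes to the three summands indexed by its edges $\{ab\}$, $\{bc\}$, $\{ca\}$. Tracking the signs in the logarithmic length cross ratio through the left-face convention, its total contribution works out to
\[
\Phi_T(\sigma,\rho) := (\sigma_{bc}-\sigma_{ca})\,\rho_{ab} + (\sigma_{ca}-\sigma_{ab})\,\rho_{bc} + (\sigma_{ab}-\sigma_{bc})\,\rho_{ca},
\]
so that $(L(\sigma),\rho) = \sum_{T\in F} \Phi_T(\sigma,\rho)$. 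It then remains only to note that each local form $\Phi_T$ is skew-symmetric: expanding $\Phi_T(\sigma,\rho) + \Phi_T(\rho,\sigma)$ into its six monomials $\sigma_{\bullet}\rho_{\bullet}$, every monomial occurs once with each sign and cancels, whence $\Phi_T(\rho,\sigma) = -\Phi_T(\sigma,\rho)$. Summing over all triangles gives $(\sigma, L(\rho)) = \sum_T \Phi_T(\rho,\sigma) = -\sum_T \Phi_T(\sigma,\rho) = -(L(\sigma),\rho)$, which is the claim.

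I expect the only real obstacle to be the bookkeeping in the middle step, namely verifying that the global edge sum reorganizes correctly into the per-triangle forms $\Phi_T$ with exactly the stated signs. The subtlety is that $L(\sigma)_{i\!j}$ is defined through the left and right faces of an \emph{oriented} representative of $\{i\!j\}$, so one must first check that $L(\sigma)_{i\!j}$ does not depend on this choice; this holds because swapping $i \leftrightarrow j$ interchanges the two adjacent triangles and leaves the four-term expression invariant, which makes the localization by triangles unambiguous. Once the identity $(L(\sigma),\rho) = \sum_T \Phi_T(\sigma,\rho)$ is secured, the antisymmetry of $\Phi_T$ is a one-line computation and completes the proof; in fact it shows skew-adjointness holds triangle by triangle, not merely after summation.
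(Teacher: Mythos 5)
Your proof is correct. The key identity $(L(\sigma),\rho)=\sum_{T\in F}\Phi_T(\sigma,\rho)$ does hold with the signs you state: for an edge $\{i\!j\}$ with left face $\{i\!jk\}$ and right face $\{\!jil\}$, the term $L(\sigma)_{i\!j}\rho_{i\!j}$ splits as $(\sigma_{\!jk}-\sigma_{ki})\rho_{i\!j}+(\sigma_{il}-\sigma_{l\!j})\rho_{i\!j}$, and these are exactly the $\{i\!j\}$-contributions of the two positively oriented triangles $\{i\!jk\}$ and $\{\!jil\}$ in your notation $T=\{abc\}$; your observation that $L(\sigma)_{i\!j}$ is unchanged under $i\leftrightarrow j$ makes the regrouping legitimate, and the antisymmetry of each $\Phi_T$ is immediate. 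The paper argues differently: it computes the adjoint directly by pairing against the edge-indexed basis functions $\delta^{i\!j}$, obtaining $L^*(b)_{i\!j}=(L(\delta^{i\!j}),b)=-b_{\!jk}+b_{ki}-b_{il}+b_{l\!j}=-L(b)_{i\!j}$, i.e.\ it verifies that the matrix of $L$ is antisymmetric row by row, localizing at edges rather than at faces. Both are short combinatorial verifications of the same fact; the paper's is marginally more compact, while yours exposes the extra structural point that the form $(L(\sigma),\rho)$ decomposes into triangle-local antisymmetric pieces, so skew-adjointness holds face by face. One small refinement to your remark on where closedness enters: for a surface with boundary the issue is not merely uncancelled boundary terms but that $L$ then maps $\field{R}^{|E|}$ to $\field{R}^{|E_{int}|}$, so the statement of skew-adjointness with respect to a single inner product does not even parse; closedness guarantees $E=E_{int}$ and that every edge has both a left and a right face.
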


\begin{proof}
	Let $\delta^{i\!j}: E \to \field{R}$ be the function defined by $(\delta^{i\!j})_{i\!j}=1$ on edge $\{i\!j\}$ and zero on other edges. Then for any $b \in \field{R}^{|E|}$, we have
	\begin{align*}
		L^*(b)_{i\!j} = (\delta^{i\!j},L^*(b))
		= (L(\delta^{i\!j}),b)
		= -b_{\!jk} + b_{ki} - b_{il} + b_{l\!j} 
		= -L(b)_{i\!j}. 
	\end{align*} 
	Thus we have $L^*=-L$. 
\end{proof}

The above lemma implies that we have an orthogonal decomposition
\[
\field{R}^{|E|} = \Ker(L) \oplus \Im(L^*) = \Ker(L) \oplus \Im(L). 
\]

\begin{lemma}Given a closed triangulated surface. We have the following.
	\begin{align*}
		\Ker(L) &= \{a:E \to \field{R}\,|\, \exists u \in \field{R}^V \text{ s.t. } a_{i\!j}= u_i + u_j \quad \forall \{i\!j\}\in E\} \\
		\Im(L) &= \{a:E \to \field{R}| \sum_{j} a_{i\!j} =0 \quad \forall i \in V\} 
	\end{align*}
\end{lemma}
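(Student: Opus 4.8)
The lemma characterizes $\Ker(L)$ and $\Im(L)$ for the operator
\[
L(\sigma)_{i\!j} = \sigma_{\!jk} - \sigma_{ki} + \sigma_{il} - \sigma_{l\!j},
\]
where $\{i\!jk\}$ is the left face and $\{\!jil\}$ the right face of the interior edge $\{i\!j\}$. The plan is to prove the two set equalities separately, using the skew-adjointness $L^* = -L$ and the orthogonal decomposition $\field{R}^{|E|} = \Ker(L) \oplus \Im(L)$ already established, so that once I identify one of the two subspaces it suffices to identify the other as its orthogonal complement. Concretely, I will first show directly that the ``vertex-sum'' space $S := \{a : E \to \field{R} \mid \exists\, u \in \field{R}^V,\ a_{i\!j} = u_i + u_j\ \forall \{i\!j\}\}$ sits inside $\Ker(L)$, then show that the ``balanced'' space $B := \{a : E \to \field{R} \mid \sum_j a_{i\!j} = 0\ \forall i \in V\}$ sits inside $\Im(L)$, and finally use a dimension/orthogonality argument to upgrade both inclusions to equalities.

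First I would verify $S \subseteq \Ker(L)$ by a direct substitution: for $a_{i\!j} = u_i + u_j$, the four terms of $L(a)_{i\!j}$ are $(u_{\!j}+u_k) - (u_k+u_i) + (u_i+u_l) - (u_l+u_{\!j})$, which cancel in pairs to give $0$. Next I would establish $B \subseteq \Im(L)$ through orthogonality. A clean way: I claim $S$ and $B$ are orthogonal complements of each other in $\field{R}^{|E|}$. Indeed, for $a \in S$ with $a_{i\!j}=u_i+u_j$ and any $b$,
\[
(a,b) = \sum_{\{i\!j\}\in E} (u_i+u_j)\, b_{i\!j} = \sum_{i\in V} u_i \sum_{j} b_{i\!j},
\]
which vanishes for all such $u$ precisely when $\sum_j b_{i\!j}=0$ at every vertex, i.e. when $b \in B$. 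Hence $B = S^{\perp}$.

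Combining these, I would finish as follows. Since $S \subseteq \Ker(L)$ and $\Ker(L) = \Im(L^*)^{\perp} = \Im(L)^{\perp}$, taking orthogonal complements gives $\Im(L) \subseteq S^{\perp} = B$. Conversely, I showed $B \subseteq \Im(L)$ via the inclusion route; more cleanly, from $S \subseteq \Ker(L)$ alone I get $\Im(L) = \Ker(L)^{\perp} \subseteq S^{\perp} = B$, and from $B = S^{\perp}$ together with $\Im(L) = \Ker(L)^{\perp}$ I get $B = S^{\perp} \supseteq \Ker(L)^{\perp} = \Im(L)$ iff $S \supseteq \Ker(L)$; so the two equalities are in fact equivalent to each other. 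It therefore suffices to prove just one inclusion beyond the two easy ones. The main obstacle is precisely this: showing $\Ker(L) \subseteq S$, i.e. that every $\sigma$ annihilated by $L$ has the product form $\sigma_{i\!j} = u_i + u_j$. For this I would reconstruct $u$ combinatorially. Starting from the relation $L(\sigma)_{i\!j}=0$ rewritten as $\sigma_{\!jk} - \sigma_{ki} = \sigma_{l\!j} - \sigma_{il}$, observe that within a single triangle $\{i\!jk\}$ the three putative vertex values are forced by $u_i = \tfrac12(\sigma_{ij}+\sigma_{ik}-\sigma_{jk})$ and cyclically; I would then show these locally defined values agree across shared edges exactly because $L(\sigma)=0$, so they patch to a global $u \in \field{R}^V$ with $u_i+u_j=\sigma_{i\!j}$, using connectedness to propagate. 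Equivalently, and what I expect to be the slickest route, I would invoke the dimension count: $\dim S = |V|$ when the surface is closed (the map $u \mapsto a$ is injective because every vertex meets an edge), while $\dim \Ker(L) = |E| - \dim \Im(L) = |E| - \dim B^{\perp\perp}$; since $\dim B = |E| - |V|$ from the vertex-incidence rank being $|V|$, one gets $\dim\Ker(L) = |V| = \dim S$, so the established inclusion $S \subseteq \Ker(L)$ is an equality, and dually $B = \Im(L)$.
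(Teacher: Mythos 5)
Your two easy steps are sound, and your observation that $B=S^{\perp}$ (where $S$ is the space of edge functions of the form $u_i+u_j$ and $B$ the space with vanishing vertex sums) is a genuinely nice addition that the paper does not make explicit: together with $\Im(L)=\Ker(L)^{\perp}$ from skew-adjointness, it reduces the entire lemma to the single inclusion $\Ker(L)\subseteq S$, with no dimension counting needed anywhere. The combinatorial route you sketch for that inclusion --- set $u_i=\tfrac12(a_{i\!j}+a_{ki}-a_{\!jk})$ on each face, check agreement across a shared edge using $L(a)_{i\!j}=0$, and patch --- is exactly the paper's proof. One caveat on the patching: what you need is that all faces containing a fixed vertex $i$ give the same value $u_i$, and for that you must walk around the \emph{star of the vertex} (the paper notes the vertex link being a fan suffices); ``connectedness of the surface'' alone is not the right statement, since two faces at $i$ could a priori only be joined through a path of faces not containing $i$.

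The route you call ``slickest,'' however, is circular. You write $\dim\Ker(L)=|E|-\dim\Im(L)=|E|-\dim B$, but the middle equality $\dim\Im(L)=\dim B$ is (given the easy inclusion $\Im(L)\subseteq B$) precisely the content of the lemma; nothing you have established independently bounds $\dim\Ker(L)$ above by $|V|$. The only source of that bound is the combinatorial reconstruction of $u$, so you cannot trade the patching argument for a dimension count --- you must carry it out. Separately, your justification that $u\mapsto(u_i+u_j)$ is injective ``because every vertex meets an edge'' is wrong: on a bipartite graph this map has a one-dimensional kernel (take $u=\pm1$ on the two classes). Injectivity here comes from the presence of triangles, i.e.\ odd cycles. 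Fortunately, with $B=S^{\perp}$ in hand you never need this injectivity nor any rank computation: once $\Ker(L)=S$ is proved combinatorially, $\Im(L)=\Ker(L)^{\perp}=S^{\perp}=B$ follows at once, which is cleaner than the paper's own dimension argument for the second equality.
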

\begin{proof}
	It is obvious that 
	\[
	\{a:E \to \field{R}| \exists u \in \field{R}^V \text{ s.t. } a_{i\!j}= u_i + u_j \quad \forall \{i\!j\}\in E\} \subset \Ker(L).
	\]
	Assume $a \in \Ker(L)$. For each face $\triangle_{i\!jk}$ we define
	\begin{equation}
		u_i := \frac{a_{i\!j}+a_{ki}-a_{jk}}{2} \label{eq:u}
	\end{equation}
	Suppose $\tilde{\triangle}_{il\!j}$ is the neighboring triangle sharing the edge $\{i\!j\}$ with $\triangle_{i\!jk}$. Because of $L(a)_{i\!j}=0$ we have
	\begin{align*}
		u_i = \frac{a_{i\!j}+a_{ki}-a_{jk}}{2} = \frac{a_{i\!j}+a_{il}-a_{l\!j}}{2} 	= \tilde{u}_i.
	\end{align*}
	Since the link of each vertex is a disk (although we only need the vertex link to be a fan), Equation \eqref{eq:u} in fact defines a function $u: V \to \field{R}$ such that for any edge $\{i\!j\}$
	\[
	a_{i\!j} = u_i + u_j.
	\]
	Hence
	\[
	\Ker(L) = \{a:E \to \field{R}\,|\, \exists u \in \field{R}^V \text{ s.t. } a_{i\!j}= u_i + u_j \quad \forall \{i\!j\}\in E\}.
	\]
	
	On the other hand, it is obvious that
	\[
	\Im(L) \subset \{a:E \to \field{R}\,|\, \sum_{j} a_{i\!j} =0 \quad \forall i \in V\}. 
	\]
	Since 
	\[
	\rank(L) = |E| - \dim \Ker(L) = |E| - |V|
	\]
	the two vector spaces are indeed the same. 
\end{proof}

Recall that conformal equivalence classes of a triangular mesh are parametrized by logarithmic length cross ratios. By the inverse function theorem the result below implies that by  deforming a non-isothermic surface in space we can reach all  nearby conformal equivalence classes. It is precisely in the case of an isothermic surface that the hypothesis of the inverse function theorem fails to be satisfied. Thus the space of all non-isothermic non-degenerate realizations in a fixed conformal equivalence class is a smooth manifold.

\begin{theorem}
	Suppose $f:V \to \field{R}^3$ is a non-degenerate realization of a closed triangulated surface. Then $f$ is isothermic if and only if there exists a non-trivial element $a \in \Im(L)$ such that
	\[
	(a,L(\sigma))=0 
	\]
	for all infinitesimal scalings $\sigma:E \to \field{R}$ coming from infinitesimal extrinsic deformations in Euclidean space, i.e. for which there exists $\dot{f}: V \to \field{R}^{3}, W:E \to \field{R}^{3}$ such that $d\!\dot{f} = \sigma d\!f + d\!f \times W$. 
\end{theorem}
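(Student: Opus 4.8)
The plan is to dualize the stated orthogonality condition using the skew-adjointness of $L$ and then match it against the characterization of isothermicity furnished by Lemma \ref{lem:eqdef}. Since the surface is closed we have $V_{int}=V$ and $E_{int}=E$. Because $L^*=-L$, the requirement $(a,L(\sigma))=0$ is the same as $(L(a),\sigma)=0$, so writing $b:=L(a)$ the assertion becomes: there is a nontrivial $a\in\Im(L)$ with $L(a)$ orthogonal to the space $S$ of extrinsic scalings. As $L$ restricts to an isomorphism of $\Im(L)$ (here I use $\Ker(L)\cap\Im(L)=\{0\}$ from the orthogonal decomposition, so $L|_{\Im(L)}$ is injective with image $\Im(L)$), the element $b=L(a)$ again lies in $\Im(L)$ and is nonzero exactly when $a$ is. Hence the theorem reduces to finding a nonzero $b\in\Im(L)$ with $(b,\sigma)=0$ for every $\sigma\in S$, and I will show this is equivalent to the existence of a nonzero function $k:E\to\mathbb{R}$ realizing Definition \ref{def:isoth} through Lemma \ref{lem:eqdef}.

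First I would pin down $S$ explicitly. Given any $\dot f:V\to\mathbb{R}^3$, the edge vector $d\!\dot f(e_{i\!j})$ splits into a part parallel to $d\!f(e_{i\!j})$ and a perpendicular part; since $d\!f(e_{i\!j})\times W_{i\!j}$ sweeps out the entire plane orthogonal to $d\!f(e_{i\!j})$ as $W_{i\!j}$ varies (using non-degeneracy, $d\!f(e_{i\!j})\neq 0$), one can always solve $d\!\dot f=\sigma\,d\!f+d\!f\times W$ for $W$ edge by edge. Therefore $S$ is precisely the image of the linear map $\dot f\mapsto\sigma$ with $\sigma_{i\!j}=\langle d\!f(e_{i\!j}),d\!\dot f(e_{i\!j})\rangle/|d\!f(e_{i\!j})|^2$, and the rotational field $W$ imposes no constraint on which $\sigma$ occur.

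With this description, set $k_{i\!j}:=b_{i\!j}/|d\!f(e_{i\!j})|^2$. The orthogonality $(b,\sigma)=0$ for all $\sigma\in S$ reads $\sum_{\{i\!j\}}k_{i\!j}\langle d\!f(e_{i\!j}),d\!\dot f(e_{i\!j})\rangle=0$ for every $\dot f$. Collecting terms vertex by vertex (a discrete summation by parts) turns the left-hand side into $\sum_{i}\langle -\sum_{j}k_{i\!j}d\!f(e_{i\!j}),\dot f_i\rangle$, and since the $\dot f_i$ are independent this vanishes for all $\dot f$ if and only if $\sum_{j}k_{i\!j}d\!f(e_{i\!j})=0$ for every vertex $i$, which is exactly the closedness condition of Lemma \ref{lem:eqdef}. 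At the same time, membership $b\in\Im(L)$ is equivalent to $\sum_{j}b_{i\!j}=0$, i.e. $\sum_{j}k_{i\!j}|d\!f(e_{i\!j})|^2=0$, for every $i$. Feeding the closedness just obtained into the algebraic identity from the proof of Lemma \ref{lem:eqdef} rewrites this as $\sum_{j}k_{i\!j}(|f_j|^2-|f_i|^2)=0$. Thus $k$ satisfies all three conditions of Lemma \ref{lem:eqdef}, and since $b\neq 0$ forces $k\not\equiv 0$ by non-degeneracy, the dual $1$-form $\tau=k\,d\!f$ witnesses that $f$ is isothermic. The converse runs backward along the same chain: an isothermic $\tau=k\,d\!f$ yields $b_{i\!j}:=k_{i\!j}|d\!f(e_{i\!j})|^2$, which lies in $\Im(L)$ (by \eqref{eq:sum} together with the identity), is orthogonal to $S$ (by \eqref{eq:closed}), and is nonzero; then $a:=(L|_{\Im(L)})^{-1}b$ is the desired element.

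I expect the main obstacle to be the coupling in the final step: the third defining condition \eqref{eq:sum} does not follow from $b\in\Im(L)$ alone, but only after the closedness $\sum_j k_{i\!j}d\!f(e_{i\!j})=0$ has been used, through the identity $\sum_j k_{i\!j}|d\!f(e_{i\!j})|^2=\sum_j k_{i\!j}(|f_j|^2-|f_i|^2)$ established in the proof of Lemma \ref{lem:eqdef}. Keeping the logical order straight---first extracting closedness from $b\perp S$, then combining it with $b\in\Im(L)$ to recover \eqref{eq:sum}---is the delicate point, together with checking that each step preserves nonvanishing, so that the nonzero $a$ corresponds to a dual $1$-form $\tau$ that is not identically zero and conversely.
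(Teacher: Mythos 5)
Your proposal is correct and follows essentially the same route as the paper: skew-adjointness of $L$, discrete summation by parts against an arbitrary $\dot f$, and the characterization $\Im(L)=\{b:\sum_j b_{i\!j}=0\}$, with your $b=L(a)$ and $k_{i\!j}=b_{i\!j}/|d\!f(e_{i\!j})|^2$ matching the paper's $\tau$ defined by $\langle d\!f(e_{i\!j}),\tau(e^*_{i\!j})\rangle=-L(a)_{i\!j}$ up to sign. The only cosmetic difference is that your "delicate point" is avoidable: condition \eqref{eq:sum} of Definition \ref{def:isoth} is literally $\sum_j k_{i\!j}|d\!f(e_{i\!j})|^2=0$, i.e.\ $b\in\Im(L)$, so the reformulation via $|f_j|^2-|f_i|^2$ (and hence the prior use of closedness) is not actually needed there.
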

\begin{proof}
	Suppose $f$ is isothermic with $\tau$ satisfying Definition \ref{def:isoth}. Let $\dot{f}:V \to \field{R}^3$ be an arbitrary infinitesimal deformation and we write $d\!\dot{f}= \sigma d\!f + d\!f \times W$. Since $\tau$ is closed, i.e. $\sum_j \tau(e_{i\!j}^*) = 0 \,\, \forall i \in V$ we have
	\[
	0 = -\sum_{i \in V} \!  \langle \sum_j \! \tau(e_{i\!j}^*), \dot{f}_i \rangle =\sum_{\{i\!j\} \in E} \! \! \! \! \! \langle \tau(e^*_{i\!j}), d\!\dot{f}(e_{i\!j}) \rangle = \sum_{\{i\!j\}} \!  \langle \tau(e^*_{i\!j}), \sigma_{i\!j} d\!f(e_{i\!j}) + d\!f(e_{i\!j}) \times W_{i\!j} \rangle.
	\]
	From $d\!f(e_{i\!j}) \times \tau(e^*_{i\!j}) = 0$ we obtain
	\[
	0 = \sum_{\{i\!j\}\in E} \langle \tau(e^*_{i\!j}), \sigma_{i\!j} d\!f(e_{i\!j}) + d\!f(e_{i\!j}) \times W_{i\!j} \rangle = \sum_{\{i\!j\} \in E} \langle \tau(e^*_{i\!j}), d\!f(e_{i\!j}) \rangle \sigma_{i\!j} .
	\]
	Using
	\[
	\langle \tau(e^*_{i\!j}), d\!f(e_{i\!j}) \rangle = \langle \tau(e^*_{\!ji}), d\!f(e_{\!ji}) \rangle
	\]
	we see that $\langle \tau,d\!f \rangle :E \to \field{R}$ is well defined. Since we know that for every interior vertex $i$ 
	\[ \sum_{j} \langle d\!f(e_{i\!j}), \tau(e^*_{i\!j}) \rangle =0 \] 
	we thus have $\langle \tau, d\!f \rangle \in \Im(L)$. Hence there exists an non-trivial element $a\in \Im(L)$ such that for every edge $\{i\!j\}$
	\[
	L(a)_{i\!j} = -\langle \tau(e^*_{i\!j}),d\!f(e_{i\!j}) \rangle.
	\]
	Because $\dot{f}$ is arbitrary we conclude that
	\[
	0 = (\langle \tau, d\!f \rangle,\sigma) = (-L(a),\sigma) = (a,L(\sigma))
	\]
	for all infinitesimal scaling $\sigma: E \to \field{R}$ coming from infinitesimal extrinsic deformations.
	
	On the other hand, suppose there exists a non-trivial $a \in \Im(L)$ such that
	\[
	(a,L(\sigma))=0 
	\]
	for all infinitesimal scaling $\sigma \in \field{R}^{|E|}$ coming from infinitesimal extrinsic deformations. We define a dual 1-form $\tau:\vv{E}_{int}^* \to \field{R}^3$ via
	\begin{align*}
		d\!f(e_{i\!j}) \times \tau(e_{i\!j}^*) &= 0,  \\
		\langle d\!f(e_{i\!j}), \tau(e^*_{i\!j}) \rangle &= -L(a)_{i\!j} 
	\end{align*}
	for every edge $\{i\!j\}$. Since $\langle d\!f, \tau \rangle \in \Im(L)$, we have 
	\[
	\sum_{j} \langle d\!f(e_{i\!j}), \tau(e^*_{i\!j}) \rangle =0 \quad \forall i \in V.
	\]
	In addition, for any infinitesimal deformation $\dot{f}: V \to \field{R}^3$ we write $d\!\dot{f}= \sigma d\!f + d\!f \times W$ for some $\sigma:E \to \field{R}$ and $W:E\to \field{R}^3$. We obtain
	\begin{align*}
		-\sum_{i \in V} \langle \sum_j \tau(e_{i\!j}^*), \dot{f}_i \rangle = \sum_{{i\!j}} \langle \tau(e_{i\!j}^*), d\!\dot{f}(e_{i\!j}) \rangle
		= \sum_{{i\!j}} \langle \tau(e_{i\!j}^*), d\!f(e_{i\!j}) \rangle \sigma_{i\!j} 
		= (a,L(\sigma))
		= 0. 
	\end{align*}
	Since $\dot{f}$ is arbitrary we conclude that $\tau$ is closed, i.e.
	\[
	\sum_{j} \tau(e^*_{i\!j})=0 \quad \forall i \in V.
	\]
	Hence, $f$ is isothermic with dual 1-form $\tau$. 
\end{proof}
\begin{proof}[Proof of Theorem \ref{thm:conformal}]
	Consider the composition of maps
	\[
	\{\text{infinitesimal deformations in }\field{R}^3\} \xrightarrow{\sigma} \{ \text{infinitesimal scalings} \} \xrightarrow{L} \{\text{change of lcrs} \}.
	\]
	The space of infinitesimal conformal deformations is exactly $\Ker(L\circ \sigma)$. Moreover, we know
	\begin{align*}
		\dim(\Ker(L\circ \sigma))= 3|V| - \rank(L\circ \sigma)
		\geq 3|V| - (|E|-|V|) 
		= |V| - 6g +6. 
	\end{align*}
	Finally we conclude: The inequality is strict $\iff$ $L\circ \sigma$ is not surjective $\iff$ $f$ is isothermic. 
\end{proof}

Since the conformal equivalence classes of a triangle mesh are parametrized by length cross ratios, we can rephrase the previous theorems as follows. 

\begin{corollary}\label{cor:conflength}
	Given a closed triangulated surface, isothermic realizations are precisely the points in the space of all non-degenerate realizations where the map that takes a non-degenerate realization to the conformal equivalence class of its induced metric fails to be a submersion.
\end{corollary}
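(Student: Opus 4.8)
The plan is to recognize this corollary as a geometric restatement of Theorem~\ref{thm:conformal}, phrased in terms of the submersion property of the map $\Phi$ that sends a non-degenerate realization $f$ to the conformal equivalence class of its induced metric $\ell_{i\!j}=|f_{\!j}-f_i|$. First I would pin down the target of $\Phi$. Conformal equivalence classes are parametrized by logarithmic length cross ratios (by the characterization quoted above), and by the dimension corollary the space of such classes has dimension $|E|-|V|$. Using the earlier lemma that computes
\[
\Im(L) = \{a:E \to \field{R}\,|\, \sum_{j} a_{i\!j} =0 \quad \forall i \in V\}, \qquad \rank(L)=|E|-|V|,
\]
I would identify the tangent space of the target with $\Im(L)$, since $L$ is precisely the differential of $\log\lcrs$ along an infinitesimal scaling.

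Next I would compute the differential of $\Phi$ at $f$. An infinitesimal deformation $\dot{f}:V\to\field{R}^3$ induces an infinitesimal change $\dot{\ell}$ of the discrete metric; writing $\dot{\ell}=\sigma\ell$ defines the infinitesimal scaling $\sigma$, and the resulting infinitesimal change of the logarithmic length cross ratios is exactly $L(\sigma)$ by the definition of $L$ in Section~\ref{sec:infindeform}. Hence
\[
d\Phi_f = L\circ\sigma,
\]
which is precisely the composite map appearing in the proof of Theorem~\ref{thm:conformal}.

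With these identifications, $\Phi$ is a submersion at $f$ exactly when $d\Phi_f=L\circ\sigma$ surjects onto the tangent space $\Im(L)$ of the target, i.e. exactly when $L\circ\sigma$ is surjective. The proof of Theorem~\ref{thm:conformal} already establishes that $L\circ\sigma$ fails to be surjective if and only if $f$ is isothermic. Combining the two statements yields the corollary: $f$ is isothermic if and only if $\Phi$ fails to be a submersion at $f$.

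I expect the only delicate point to be the bookkeeping that identifies the tangent space of the space of conformal equivalence classes with $\Im(L)$ and verifies that $d\Phi_f$ coincides with $L\circ\sigma$ under this identification. Once this is in place the statement is an immediate consequence of Theorem~\ref{thm:conformal}, so there is no genuine analytic obstacle; the content is a translation of the rank condition ``the inequality is strict'' into the language of submersions.
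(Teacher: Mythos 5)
Your proposal is correct and follows essentially the same route as the paper, which presents this corollary as a direct rephrasing of Theorem \ref{thm:conformal}: the differential of the realization-to-conformal-class map is $L\circ\sigma$, its target tangent space is $\Im(L)$, and failure of surjectivity is exactly the isothermic condition established in the proof of that theorem. The bookkeeping you flag as the only delicate point is precisely the content the paper leaves implicit, and your treatment of it is sound.
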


It is interesting to see how combinatorics affect geometry. It is known that the number of vertices of a closed genus-$g$ triangulated surface satisfies the Heawood bound \cite{Heawoo1890}
\[
|V| \geq \frac{7+\sqrt{1+48g}}{2}.
\]
This condition is known to be sufficient for the existence of a genus-g triangulated surfaces with $|V|$ vertices except for $g=2$. Comparing the Heawood bound with the inequality in Theorem \ref{thm:conformal} we obtain more examples of isothermic surfaces.

\begin{corollary}
	Every non-degenerate realization of a closed triangulated surface with $|V|< 6g+4$ is isothermic.
\end{corollary}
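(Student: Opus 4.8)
The plan is to combine Theorem~\ref{thm:conformal} with a dimension count for the ambient conformal group. By Theorem~\ref{thm:conformal}, $f$ is isothermic exactly when the space of infinitesimal conformal deformations has dimension strictly greater than $|V|-6g+6$; and since $|V|<6g+4$ is equivalent to $|V|-6g+6\le 9$, it is enough to exhibit, for an arbitrary non-degenerate realization, a $10$-dimensional space of infinitesimal conformal deformations.

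First I would manufacture these deformations from the Möbius group of $\field{R}^3\cup\{\infty\}$, whose Lie algebra $\mathfrak{o}(4,1)$ has dimension $10$. Because the conformal equivalence class of a realization is Möbius invariant (Section~\ref{sec:infcon}), applying a one-parameter family of Möbius transformations $\sigma_t$ keeps $\sigma_t\circ f$ in the conformal class of $f$; differentiating at $t=0$ produces an infinitesimal deformation $\dot f\colon V\to\field{R}^3$ that is conformal, i.e. lies in $\Ker(L\circ\sigma)$. This defines a linear map from $\mathfrak{o}(4,1)$ into the space of infinitesimal conformal deformations, whose image is the desired candidate subspace.

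The key step, and the one I expect to be the main obstacle, is to show that this map is injective, i.e. that no nonzero infinitesimal Möbius transformation fixes every vertex to first order. A nonzero conformal vector field on $S^3$ can have a positive-dimensional zero set only along a round circle, so if its zeros contain all of $f(V)$ then the vertices are concyclic or collinear. Collinear vertices are ruled out by non-degeneracy (the faces would be degenerate), and concyclic vertices lie on a sphere, so $f$ is isothermic by the construction for meshes inscribed in a sphere in Section~\ref{sec:inscribedmesh}. In the remaining (generic) case the map is injective, its image has the full dimension $10>|V|-6g+6$, and Theorem~\ref{thm:conformal} gives the strict inequality, hence isothermicity.

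Equivalently --- and this is perhaps the cleanest way to phrase the final argument --- one can work directly with ranks: $L\circ\sigma$ annihilates all Möbius directions, so $\rank(L\circ\sigma)\le 3|V|-10$, whereas surjectivity of $L\circ\sigma$ onto $\Im(L)$ (the non-isothermic case) would require $\rank(L\circ\sigma)=|E|-|V|=2|V|-6+6g$. The inequality $2|V|-6+6g>3|V|-10$ is precisely $|V|<6g+4$, so under the hypothesis $L\circ\sigma$ cannot be surjective and $f$ is isothermic.
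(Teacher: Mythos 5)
Your proposal is correct and takes essentially the same route as the paper: the published proof is precisely your first paragraph --- the infinitesimal M\"obius transformations supply a ($\ge 10$)-dimensional space of infinitesimal conformal deformations, so the inequality of Theorem \ref{thm:conformal} is strict as soon as $10 > |V|-6g+6$. The injectivity discussion you add is a genuine refinement that the paper silently skips (it just asserts the dimension is at least $10$); only be aware that in the paper's terminology \emph{non-degenerate} does not force faces to span planes, so collinear or concyclic vertex sets are not excluded and the corollary of Section \ref{sec:inscribedmesh} you invoke for them applies to closed surfaces only when $g\ge 2$ --- that residual sub-case is better closed by the dimension count of Section \ref{sec:harm}, which improves when all edge vectors are coplanar.
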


\begin{proof}
	The space of infinitesimal conformal deformations contains all deformations that come from infinitesimal M{\"o}bius transformations. Therefore this space has dimension at least 10 and hence a surface must be isothermic if $10 > |V| - 6g+6$. 
\end{proof}

Some of these surfaces with small number of vertices can be realized in Euclidean space without self-intersection. For example, there are embedded surfaces with $g=2$ and $|V|=10$ as shown in \cite{Hougardy2007}. 

\section{Preserving intersection angles}\label{sec:intangles}

Given a triangulated surface in Euclidean space, every triangle determines a circumscribed circle and every two triangles sharing an edge determines a circumscribed sphere if the vertices are not con-circular. Two circumscribed circles are called neighboring if their corresponding triangles share an edge. We will call two circumscribed spheres neighboring if they have a common vertex.

Intersection angles of circles and spheres are M\"{o}bius invariant. The intersection angles of neighboring circumcircles of a triangulated surface were used to define discrete Willmore functional \cite{Bobenko2005}.  

\begin{proof}[Proof of Theorem \ref{thm:intangles}]
	Suppose we have an infinitesimal deformation $\dot{f}$ that preserves the angles between circumcircles and circumspheres but is not induced from M\"{o}bius transformations. Then it cannot be that $\dot{f}$ also preserves the length cross ratios (because it is not hard to see that in this case $\dot{f}$ is an infinitesimal M{\"o}bius transformation).
	We write $d\!\dot{f} = \sigma d\!f + d\!f \times W$ for some $\sigma:E \to \mathbb{R}$ and $W:E \to \mathbb{R}^3$. Then the change of logarithmic length cross ratios is $L(\sigma)$ where
	\[
	L(\sigma)_{i\!j}:=\sigma_{jk} -\sigma_{ki} + \sigma_{il}-\sigma_{l\!j} \quad \forall \{i\!j\} \in E_{int}.
	\]
	(See Figure \ref{fig:orientation}.) By our assumptions $L(\sigma)$ does not vanish identically.
	
	We define a dual $1$-form
	\[
	\tau(e^*_{i\!j}):= L(\sigma)_{i\!j} \frac{d\!f(e_{i\!j})}{|d\!f(e_{i\!j})|^2}.
	\] 
	Then we have
	\begin{gather*}
		\tau(e^*_{i\!j}) \times d\!f(e_{i\!j}) =0 \quad \forall \{ i\!j \} \in E\\
		\sum_{j} \langle \tau(e^*_{i\!j}), d\!f(e_{i\!j}) \rangle = \sum_{j} L(\sigma)_{i\!j} = 0 \quad \forall i \in V_{int}.
	\end{gather*}
	In order to show that $f$ is isothermic, we need to verify the closedness of $\tau$, i.e. for every interior vertex i
	\[
	\sum_{j} \tau(e^*_{i\!j}) = 0.
	\]
	
	We identify Euclidean space $\field{R}^3$ with the space $\Im\field{H}$ of imaginary quaternions (Section \ref{sec:smooth}). Pick any vertex $v_0$ and denote its neighboring vertices by $v_1,v_2,\dots,v_n$. Then we take an inversion in the unit sphere centered at $f_0:=f(v_0)$ and denote the images of the neighboring vertices by $\tilde{f}_i$. We have the following relations:
	\begin{align*}
		\tilde{f}_j - f_0 &= (f_j-f_0)^{-1}, \\
		\tilde{f}_{j+1} - \tilde{f}_{j} &= -(f_j-f_0)^{-1} ((f_{j+1}-f_0)-(f_{j}-f_0)) (f_{j+1}-f_0)^{-1}\\ &= -(f_j-f_0)^{-1} (f_{j+1}-f_{j}) (f_{j+1}-f_0)^{-1}.
	\end{align*}
	We define the infinitesimal scaling
	\[
	\tilde{\sigma}_{j,j+1} :=\frac{ |\tilde{f}_{j+1}-\tilde{f}_{j}|^{\LargerCdot}}{|\tilde{f}_{j+1}-\tilde{f}_{j}|}
	\]
	By taking the logarithmic derivative of the following equation 
	\[
	\frac{|\tilde{f}_{j+1}-\tilde{f}_{j}|}{|\tilde{f}_{j}-\tilde{f}_{j-1}|} = \frac{|f_{j+1}-f_{j}||f_{j-1}-f_{0}|}{|f_{j+1}-f_{0}||f_{j}-f_{j-1}|}
	\]
	we obtain for $j=1,\dots,n$
	\begin{equation*}
		\tilde{\sigma}_{j,j+1} - \tilde{\sigma}_{j-1,j} = (L(\sigma))_{0j}
	\end{equation*}
	where $\sigma_{i\!j} = |f_j - f_i|^{\LargerCdot}/|f_j - f_i|$.
	On the other hand, the vertices $\tilde{f}_1,\tilde{f}_2,\dots,\tilde{f}_n,\tilde{f}_1$ form a closed polygon in $\mathbb{R}^3$. We define
	\begin{gather*}
		\tilde{\ell}_{j,j+1}:= |\tilde{f}_{j+1}-\tilde{f}_j|, \\
		\tilde{T}_{j,j+1}:= \frac{\tilde{f}_{j+1}-\tilde{f}_j}{|\tilde{f}_{j+1}-\tilde{f}_j|}.
	\end{gather*}
	Since the polygon is closed, we have
	\[
	0= \sum_{j=1}^{n} \tilde{\ell}_{j,j+1} \tilde{T}_{j,j+1}.
	\]
	The fact that the deformation $\dot{f}$ preserves the intersection angles of neighboring circles and neighboring spheres implies that the angles between the neighboring segments and osculating planes of the closed polygon remain constant. Thus there exists a constant vector $c \in \mathbb{R}^3$ such that
	\begin{align*}
		0 =& \sum \dot{\tilde{\ell}}_{j,j+1} \tilde{T}_{j,j+1} + \sum \tilde{\ell}_{j,j+1} \tilde{T}_{j,j+1} \times c \\ =& \sum \tilde{\sigma}_{j,j+1} (\tilde{f}_{j+1}-\tilde{f}_{j}) \\ 
		%	\\
		%	= \sum_{j=1}^n (-\tilde{\sigma}_{j,j+1}+\tilde{\sigma}_{j-1,j}) \tilde{f}_j \\
		=& - \sum L(\sigma)_{0j}(f_j-f_0)^{-1} \\
		=& \sum_{j=1}^n \tau(e^*_{0j}).
	\end{align*}
	To show that the converse is true one only has to reverse the previous argument.     
\end{proof}

\section{Example: Isothermic quadrilateral surfaces} \label{sec:quad}
We show that isothermic quadrilateral surfaces as defined by Bobenko and Pinkall \cite{Bobenko1996} are isothermic under our definition (after an arbitrary subdivision into triangles). Isothermic quadrilateral surfaces are analogous to conformal curvature line parametrizations of smooth isothermic surfaces. They can be treated using the theory of integrable systems. New isothermic surfaces can be obtained from a given isothermic surface via  the Christoffel duality and Darboux transformations \cite{Hertrich1999}. Special discrete surfaces related to isothermic quadrilateral meshes were studied in \cite{Bobenko2014,Bobenko2006}.

Questions about infinitesimal rigidity of quadrilateral meshes have been considered by \cite{Schief2008,Wallner2008}. 

We first review some results on isothermic quadrilateral surfaces from \cite{Bobenko2008}. Then we construct an infinitesimal isometric deformation for every isothermic quadrilateral surface and show that the change of mean curvature around each vertex is zero. In this way we obtain isothermic triangulated surfaces from the earlier notion of isothermic quadrilateral surfaces.
\subsection{Review}\label{sec:review}
	\begin{definition}[\cite{Bobenko1996}]
		A discrete isothermic net is a map $F:\field{Z}^2 \to \field{R}^3$, for which all elementary quadrilaterals have factorized real cross-ratios in the form
		\[
		q(F_{m,n},F_{m+1,n},F_{m+1,n+1},F_{m,n+1}) = \frac{\alpha_m}{\beta_n} \quad \forall m,n \in \field{Z},
		\]
		where $\alpha_m \in \field{R}$ does not depend on $n$ and $\beta_n \in \field{R}$ not depend on $m$.
	\end{definition}
\begin{figure}[ht]
	\centering
	\begin{tikzpicture}[scale=1.5]
	\coordinate [label=below:$F_{m,n}$](mn) at (0,0);
	\coordinate [label=below:$F_{m+1,n}$](m1n) at (1,0);
	\coordinate [label=above:$F_{m+1,n+1}$] (m1n1) at (1,1);
	\coordinate [label=above:$F_{m,n+1}$](mn1) at (0,1);
	\draw (mn)--(m1n)--(m1n1)--(mn1)--(mn);
	\end{tikzpicture}
	\caption{An elementary quadrilateral}
\end{figure}
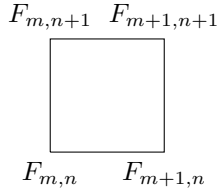
	\begin{theorem}[\cite{Bobenko1996}] \label{thm:dual}
		Let $F:\field{Z}^2 \to \field{R}^3$ be a discrete isothermic net. Then the discrete net $F^*: \field{Z}^2 \to \field{R}^3$ defined (up to translation) by the equations
		\begin{gather*}
			F^*_{m+1,n} - F^*_{m,n} = \alpha_m \frac{F_{m+1,n}-F_{m,n}}{||F_{m+1,n}-F_{m,n}||^2}, \\
			F^*_{m,n+1} - F^*_{m,n} =\beta_n \frac{F_{m,n+1}-F_{m,n}}{||F_{m,n+1}-F_{m,n}||^2}
		\end{gather*}
		is isothermic. $F^*$ is called the Christoffel dual of $F$.
	\end{theorem}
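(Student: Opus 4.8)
The plan is to verify directly that the net $F^*$ is well-defined (i.e.\ that the two defining equations are compatible around each elementary quadrilateral) and that it is again a discrete isothermic net, meaning its elementary quadrilaterals have factorized real cross-ratios. First I would check closedness: going around a quadrilateral $F_{m,n}, F_{m+1,n}, F_{m+1,n+1}, F_{m,n+1}$, the four prescribed increments of $F^*$ must sum to zero, so that $F^*$ is genuinely a map on $\field{Z}^2$ rather than a multivalued one. This is the integrability condition
\[
(F^*_{m+1,n}-F^*_{m,n}) + (F^*_{m+1,n+1}-F^*_{m+1,n}) - (F^*_{m+1,n+1}-F^*_{m,n+1}) - (F^*_{m,n+1}-F^*_{m,n}) = 0,
\]
and I expect this to be the main obstacle: it is exactly here that the factorized-cross-ratio hypothesis $q = \alpha_m/\beta_n$ must be used, and the cleanest route is to pass to quaternions. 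Identifying $\field{R}^3$ with the imaginary quaternions $\Im\field{H}$, the defining equations read $F^*_{m+1,n}-F^*_{m,n} = \alpha_m (F_{m+1,n}-F_{m,n})^{-1}$ and similarly in the $n$-direction, since for an imaginary quaternion $a$ one has $a/|a|^2 = a^{-1}$ up to sign. The real cross-ratio condition then becomes a multiplicative quaternionic identity among the four edge vectors of each quadrilateral, and substituting this into the closed-loop sum should make the four inverse-edge terms cancel in pairs.

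Once closedness is established, $F^*$ is defined up to the single additive constant of integration, which accounts for the phrase ``up to translation'' in the statement. The remaining step is to show $F^*$ is isothermic with the \emph{same} factorizing functions $\alpha_m, \beta_n$ (or their reciprocals). For this I would compute the cross-ratio of an elementary quadrilateral of $F^*$ in terms of that of $F$. The edges of $F^*$ are the edges of $F$ inverted and rescaled by $\alpha_m$ or $\beta_n$; since the cross-ratio is a Möbius-covariant quantity, inversion replaces $q$ by a simple expression in $q$, and the extra scalar factors $\alpha_m, \beta_n$ combine to give precisely a factorized real cross-ratio of the form $\alpha_m/\beta_n$ again (up to swapping the roles of the two directions). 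Concretely, one checks that the quaternionic cross-ratio of the inverted edges equals a conjugate of the original, hence has the same real value, and the scalar factors attach to the correct index.

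I would carry out the computation entirely in $\field{H}$, exploiting that a product of an even number of imaginary quaternions reconstructs the cross-ratio as a genuine quaternion whose reality is equivalent to the four points being concircular or, more generally, to the cross-ratio lying in $\field{R}$. The routine algebra—expanding $(F_{m+1,n}-F_{m,n})^{-1}$ etc.\ and collecting terms—I would not grind through here; the conceptual content is that the factorization hypothesis is exactly what is needed for both the closedness of $dF^*$ and the persistence of the factorized real cross-ratio, so the two halves of the proof use the same structural input. The only genuine subtlety is bookkeeping the signs and the assignment of $\alpha_m$ versus $\beta_n$ to the horizontal and vertical edges of $F^*$, which is where I expect most of the care to be required.
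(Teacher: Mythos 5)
The paper does not prove Theorem \ref{thm:dual}: it is imported verbatim from Bobenko--Pinkall \cite{Bobenko1996} (see also \cite{Bobenko2008}) as part of the review in Section \ref{sec:review}, so there is no in-paper argument to compare yours against. That said, your outline follows the standard route to this classical result, and its structure is sound: the one nontrivial point is that the two prescribed increments of $F^*$ are compatible around each elementary quadrilateral, this is precisely where the factorization $q=\alpha_m/\beta_n$ enters, and the computation is cleanest in $\Im\field{H}$, where $x/\|x\|^2=-x^{-1}$ for imaginary $x$.

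The difficulty is that, as written, your proposal stops exactly where the theorem begins. With $A=F_{m,n}$, $B=F_{m+1,n}$, $C=F_{m+1,n+1}$, $D=F_{m,n+1}$, the compatibility you must establish is
\[
\alpha_m \frac{B-A}{\|B-A\|^2} \;+\; \beta_n \frac{C-B}{\|C-B\|^2} \;=\; \beta_n \frac{D-A}{\|D-A\|^2} \;+\; \alpha_m \frac{C-D}{\|C-D\|^2},
\]
to be deduced from the single hypothesis $q(A,B,C,D)=(A-B)(B-C)^{-1}(C-D)(D-A)^{-1}=\alpha_m/\beta_n\in\field{R}$. This identity \emph{is} the first half of the theorem, and relegating it to ``routine algebra'' leaves the proof empty; moreover your expectation that the four inverse-edge terms ``cancel in pairs'' is not how the cancellation works. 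One must use the reality of the cross-ratio (so that it commutes with all quaternions and its value is unchanged under cyclic conjugation of the factors) to rewrite one term as a product of the others, and then invoke the closure of the original quadrilateral, $(B-A)+(C-B)=(D-A)+(C-D)$; both inputs are needed simultaneously, and the sign bookkeeping you flag as a subtlety is genuinely where a naive attempt goes wrong. The second half of your plan --- that the quadrilaterals of $F^*$ again have real cross-ratio $\alpha_m/\beta_n$, read off from the inverted and rescaled edges --- is correct and genuinely routine once compatibility is in hand. Note finally that for the way this theorem is used in Section \ref{sec:isothermicnet}, what the authors actually need is exactly this compatibility statement together with the diagonal formula of Lemma \ref{lma:dia}, not the isothermicity of $F^*$ itself; so the part of the argument you have deferred is also the part that carries all the weight downstream.
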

We need a formula for the diagonals of its Christoffel dual (Corollary 4.33 in \cite{Bobenko2008}).
\begin{lemma} \label{lma:dia}
	Given a discrete isothermic net $F$, the diagonals of any elementary quadrilateral of its Christoffel dual are given by
	\begin{gather*}
		F^*_{m+1,n} - F^*_{m,n+1} = (\alpha_m - \beta_n) \frac{F_{m+1,n+1}-F_{m,n}}{||F_{m+1,n+1}-F_{m,n}||^2}, \\
		F^*_{m+1,n+1} - F^*_{m,n} = (\alpha_m - \beta_n) \frac{F_{m+1,n}-F_{m,n+1}}{||F_{m+1,n}-F_{m,n+1}||^2}. 
	\end{gather*}
\end{lemma}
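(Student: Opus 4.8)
\textbf{Proof proposal for Lemma \ref{lma:dia}.}

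The plan is to reduce the diagonal formulas to the edge formulas of Theorem \ref{thm:dual} together with the factorized cross-ratio condition. I would set up quaternionic (or vector) notation for the four edge vectors of an elementary quadrilateral, writing $a := F_{m+1,n}-F_{m,n}$, $b := F_{m+1,n+1}-F_{m+1,n}$, $c := F_{m+1,n+1}-F_{m,n+1}$ and $d := F_{m,n+1}-F_{m,n}$, so that the quadrilateral-closure relation reads $a+b = d+c$. The two diagonals on the primal side are $a+b = d+c$ (the vertex diagonal $F_{m+1,n+1}-F_{m,n}$) and $a-d = c-b$ (the other diagonal $F_{m+1,n}-F_{m,n+1}$). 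The first step is to translate Theorem \ref{thm:dual} into statements about the dual edges: $F^*$ has dual edges $\alpha_m a/|a|^2$ and $\alpha_{m+1} b/|b|^2$ along the two $n$-type sides, and $\beta_n d/|d|^2$ and $\beta_{n+1} c/|c|^2$ along the two $m$-type sides.

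The key computational step is to express the dual diagonal $F^*_{m+1,n}-F^*_{m,n+1}$ as a sum of two dual edges, say $F^*_{m+1,n}-F^*_{m,n} + F^*_{m,n}-F^*_{m,n+1} = \alpha_m a/|a|^2 - \beta_n d/|d|^2$, and then to show this equals $(\alpha_m-\beta_n)(F_{m+1,n+1}-F_{m,n})/|F_{m+1,n+1}-F_{m,n}|^2$. The cleanest route, I expect, is to use quaternions: the real factorized cross-ratio condition $q = \alpha_m/\beta_n$ means that in the quaternionic formulation the cross-ratio $a\,b^{-1}c\,d^{-1}$ (with appropriate conventions) is real, which forces a rigid relation among the four edge vectors and their inverses. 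Because $x/|x|^2 = \bar{x}^{-1}$ for an imaginary quaternion $x$ (up to sign $\bar{x}=-x$, giving $x/|x|^2 = -x^{-1}$), each term $\alpha_m a/|a|^2$ becomes $\mp \alpha_m a^{-1}$, and the closure relation $a+b=d+c$ for the diagonal $a+b$ combined with reality of the cross-ratio should collapse the difference $\alpha_m a^{-1} - \beta_n d^{-1}$ into a single scalar multiple of the inverse of the primal diagonal. Carrying this out for both diagonals and then rewriting the inverses back as $x/|x|^2$ yields the two stated formulas.

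The main obstacle, I anticipate, is bookkeeping: keeping the signs, the conjugations, and the indices ($\alpha_m$ versus $\alpha_{m+1}$, $\beta_n$ versus $\beta_{n+1}$) consistent while invoking the reality of the cross-ratio at exactly the right moment. The reality condition must be used in the precise algebraic form that lets $\alpha_m a^{-1}-\beta_n d^{-1}$ factor through $(a+b)^{-1}$; verifying that the leftover scalar is exactly $\alpha_m-\beta_n$ rather than some other combination is the delicate point, and it is here that one genuinely needs $q=\alpha_m/\beta_n$ (not merely that $q$ is real). An alternative, more pedestrian approach would avoid quaternions entirely and simply add the two known edge vectors of $F^*$ directly, then use the factorized cross-ratio to simplify; since this is stated as Corollary 4.33 of \cite{Bobenko2008}, I would also be prepared to cite that reference and present only the short verification rather than a from-scratch derivation.
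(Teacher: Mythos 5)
The paper offers no proof of this lemma at all --- it is stated with a bare citation to Corollary 4.33 of \cite{Bobenko2008} --- so your fallback of simply citing the reference is exactly what the authors do, and your quaternionic sketch is a genuinely more self-contained route. The sketch works, and the delicate point you flag does resolve. With $a=F_{m+1,n}-F_{m,n}$, $b=F_{m+1,n+1}-F_{m+1,n}$, $c=F_{m+1,n+1}-F_{m,n+1}$, $d=F_{m,n+1}-F_{m,n}$ viewed as imaginary quaternions (so $x/|x|^2=-x^{-1}$), Theorem \ref{thm:dual} gives $F^*_{m+1,n}-F^*_{m,n+1}=-\alpha_m a^{-1}+\beta_n d^{-1}$; multiplying on the right by the primal diagonal $e:=a+b=c+d$ gives
\[
\left(-\alpha_m a^{-1}+\beta_n d^{-1}\right)e=-(\alpha_m-\beta_n)-\alpha_m a^{-1}b+\beta_n d^{-1}c,
\]
and the last two terms cancel because the cross-ratio $q=ab^{-1}cd^{-1}=\alpha_m/\beta_n$ is real: reversing the order of the four imaginary factors produces the quaternionic conjugate, so $d^{-1}cb^{-1}a=\bar q=q$, whence $\beta_n d^{-1}c=\alpha_m a^{-1}b$. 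This yields $F^*_{m+1,n}-F^*_{m,n+1}=-(\alpha_m-\beta_n)e^{-1}=(\alpha_m-\beta_n)e/|e|^2$, which is the first formula; the second is the same computation with $g:=a-d=c-b$, where the required cancellation $\alpha_m a^{-1}d=\beta_n b^{-1}c$ follows directly from $ab^{-1}cd^{-1}=\alpha_m/\beta_n$ without any reversal. So, as you anticipated, both the reality and the precise value of the cross-ratio get used. One bookkeeping slip to fix before running the second computation: by Theorem \ref{thm:dual} the dual edge parallel to $b$ carries the coefficient $\beta_n$ (not $\alpha_{m+1}$) and the one parallel to $c$ carries $\alpha_m$ (not $\beta_{n+1}$), since $\alpha$ always attaches to steps in the $m$-direction and $\beta$ to steps in the $n$-direction; indeed $\alpha_{m+1}$ and $\beta_{n+1}$ cannot appear, as they are absent from the statement of the lemma.
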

\subsection{Infinitesimal flexibility of isothermic quadrilateral surfaces}\label{sec:isothermicnet}
Given a discrete isothermic net we first arbitrarily introduce a diagonal for each quadrilateral in order to get a triangulation. Then, we define infinitesimal rotations on faces as follows.
\paragraph{Rule:} Suppose $ABCD$ is an elementary quadrilateral of a discrete isothermic net $F: \field{Z}^2 \to \field{R}^3$ and the diagonal $AC$ is inserted. Then we get two triangles $ABC$ and $ACD$. We define infinitesimal rotations $Z_{ABC}:= B^*$ and $Z_{ACD}:= D^*$ where $B^*$ and $D^*$ are the corresponding vertices of the Christoffel dual $F^*: \field{Z}^2 \to \field{R}^3$. 
\begin{theorem}
	Suppose we are given a discrete isothermic net $F:\field{Z}^2 \to \field{R}^3$ and its Christoffel dual $F^*:\field{Z}^2 \to \field{R}^3$. We assume that the faces of $F$ have been subdivided into triangles in an arbitrary way. Then the infinitesimal rotations given by the above rule for each triangle define an infinitesimal isometric deformation of the triangulated surface. Moreover, the infinitesimal deformation preserves the integrated mean curvature and is not induced from Euclidean transformations.  
\end{theorem}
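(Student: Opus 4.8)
The plan is to take the face rotation field $Z:F \to \field{R}^3$ prescribed by the Rule and verify three things: that $Z$ integrates to a genuine infinitesimal isometric deformation $\dot f$, that this $\dot f$ leaves every vertex mean curvature $\dot H_i$ unchanged, and that $\dot f$ is not an infinitesimal Euclidean motion. The only inputs from the isothermic structure will be the parallelism built into the Christoffel dual, namely that $F^*_{m+1,n}-F^*_{m,n}$ is a scalar multiple of $F_{m+1,n}-F_{m,n}$ (and likewise in the $n$-direction) by Theorem \ref{thm:dual}, and that the dual diagonals are scalar multiples of the corresponding primal diagonals by Lemma \ref{lma:dia}. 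Throughout I work with the exact dual $1$-form $\tau(e^*_{ij}) := Z_{ijk}-Z_{jil}$, which is automatically closed.

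First I show $\dot f$ exists and is isometric, i.e. that $df(e_{ij})\times Z$ agrees on the two faces of every interior edge. There are two kinds of edges. For an inserted diagonal of a quadrilateral $ABCD$ the two adjacent $Z$-values are $B^*$ and $D^*$, and $B^*-D^*$ is parallel to the primal diagonal $C-A=df(e_{AC})$ by Lemma \ref{lma:dia}, so the cross products agree. For an original grid edge $e$ the key combinatorial observation is that, whichever diagonal is chosen in an adjacent quadrilateral, the triangle containing $e$ carries the dual of one of the two endpoints of $e$ (the endpoint not lying on that quadrilateral's diagonal); since the duals of the two endpoints differ by a multiple of $df(e)$ by the Christoffel parallelism, $df(e)\times Z$ is independent of the diagonal choice and hence equal on both sides. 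Because the three edge vectors of a triangle sum to zero, $e\mapsto df(e)\times Z$ is automatically closed, so on the simply connected domain it integrates to $\dot f$ with $d\dot f=df\times Z$; and $\langle df,d\dot f\rangle=\langle df,df\times Z\rangle=0$ shows $\dot f$ is isometric.

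For the mean curvature I rewrite $\dot H_v=\sum_j\langle df(e_{vj}),\tau(e^*_{vj})\rangle$ by summation by parts around the vertex star into $\dot H_v=\sum_{T\ni v}\langle\mathrm{opp}_v(T),Z_T\rangle$, where $\mathrm{opp}_v(T)$ is the edge of $T$ opposite to $v$, oriented so that these edges trace the closed link polygon of $v$. I then claim $\dot H_v$ is independent of the diagonal choices: flipping the diagonal of a single quadrilateral $Q$ alters the contribution to $\dot H_O$, for each corner $O$ of $Q$, by a correction $\Delta_{Q,O}$; after the terms carrying $O^*$ cancel (they are weighted by $\mathrm{opp}$-vectors summing to zero), this reduces to $\mathrm{param}_{OP}\,\langle Q'-P,P-O\rangle/|P-O|^2+\mathrm{param}_{OR}\,\langle R-Q',R-O\rangle/|R-O|^2$, where $P,R$ are the edge-neighbors of $O$ and $Q'$ the opposite corner. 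Using that the elementary quadrilateral is concyclic (equivalently, its cross-ratio is real), the two inner products equal $-|Q'-P|\,|P-O|\cos\angle P$ and $|R-Q'|\,|R-O|\cos\angle R$ with $\angle P+\angle R=\pi$, while the magnitude of the cross-ratio expresses $\mathrm{param}_{OP}/\mathrm{param}_{OR}$ exactly as $-|R-Q'|\,|P-O|/(|Q'-P|\,|R-O|)$; these combine to give $\Delta_{Q,O}=0$. Granting this, I compute $\dot H_v$ in the locally convenient triangulation in which all four quadrilaterals around $v$ have diagonals avoiding $v$: then every triangle at $v$ carries $Z=v^*$, so $\dot H_v=\langle\sum_T\mathrm{opp}_v(T),v^*\rangle=0$ since the link polygon closes up.

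Finally, $\dot f$ fails to be Euclidean because an infinitesimal Euclidean motion corresponds to a constant $Z$, whereas here $Z$ runs through distinct vertices $B^*,D^*,\dots$ of the non-degenerate dual net $F^*$. The main obstacle is the flip-invariance identity $\Delta_{Q,O}=0$: this is the one place where the isothermic hypothesis (real cross-ratio, hence concyclicity of the elementary quadrilaterals) is genuinely used, through the inscribed-angle relation $\cos\angle R=-\cos\angle P$ together with the side-length formula for the cross-ratio. Everything else is formal bookkeeping with discrete forms and the two parallelism relations of the Christoffel dual.
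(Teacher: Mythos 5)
Your proof is correct, and its first half (compatibility of the face rotations across both the inserted diagonals and the original grid edges, hence integrability to an isometric $\dot f$) is exactly the paper's argument. For the mean-curvature verification you take a genuinely different route: the paper enumerates the $2^4=16$ diagonal configurations around a vertex, reduces them to six by symmetry, and tabulates the change of mean curvature edge by edge, whereas you first rewrite $\dot H_v=\sum_{T\ni v}\langle \mathrm{opp}_v(T),Z_T\rangle$ by summation by parts, prove a flip-invariance identity $\Delta_{Q,O}=0$ showing that $\dot H_v$ is independent of all diagonal choices, and then evaluate in the configuration where every diagonal avoids $v$, so that every triangle at $v$ carries $Z=v^*$ and $\dot H_v=\langle\sum_T\mathrm{opp}_v(T),v^*\rangle=0$ because the link polygon closes. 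This is more conceptual than the table: it explains why all six cases work out and isolates the single place where the isothermic hypothesis is used. One presentational caveat: your derivation of $\Delta_{Q,O}=0$ via inscribed angles ($\angle P+\angle R=\pi$) together with the sign $-|R-Q'|\,|P-O|/(|Q'-P|\,|R-O|)$ for the parameter ratio tacitly assumes the embedded arrangement of the four concyclic points (equivalently, a negative cross-ratio); for a non-embedded elementary quadrilateral both signs flip simultaneously and the identity survives. A uniform one-line argument avoids the case split: in the complex plane of the circumcircle set $u=(Q'-P)/(P-O)$ and $w=(R-Q')/(R-O)$; the cross-ratio satisfies $q=-w/u$, so $q\in\field{R}$ gives $q\,\Re(u)+\Re(w)=\Re(qu+w)=0$, which is exactly $\Delta_{Q,O}=0$ after dividing by $\beta_n$. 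This is a refinement of your step, not a gap. Your closing observation that $Z$ is non-constant (so $\dot f$ is not an infinitesimal Euclidean motion) is also correct and is in fact more explicit than the paper's own proof on this point.
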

\begin{proof}
	By Theorem \ref{thm:dual} and Lemma \ref{lma:dia}, the infinitesimal rotations of two adjacent triangles are compatible on the common edge. Therefore they define an infinitesimal isometric deformation.
	
	It remains to show that around an arbitrary vertex the change of the integrated mean curvature around vertices is zero. For every vertex there are $2^4=16$ ways of inserting diagonals on the four neighboring quadrilaterals. Taking the symmetry into account we can reduce them to $6$ cases. We enumerate these $6$ cases and calculate the change of mean curvature on each edge in Table \ref{table}. It can be checked directly that in all cases the sum around the vertex is zero.   
	\begin{table}[t]
		\centering
		\begin{tabular}{ccc}
			\begin{tikzpicture}[scale=1.6]
			\coordinate [label=below right:$F_{m,n}$](mn) at (0,0);
			\coordinate [label=right:$0\phantom{\alpha_{m}}$](m1n) at (1,0);
			\coordinate (m1n1) at (1,1);
			\coordinate [label=above:$0$](mn1) at (0,1);
			\coordinate (m-1n1) at (-1,1);
			\coordinate [label=left:$\phantom{\alpha_{m-1}}0$](m-1n) at (-1,0);
			\coordinate (m-1n-1) at (-1,-1);
			\coordinate [label=below:$0$](mn-1) at (0,-1);
			\coordinate (m1n-1) at (1,-1);
			\draw[dashed] (m1n)--(m1n1)--(mn1)--(m-1n1)--(m-1n)--(m-1n-1)--(mn-1)--(m1n-1)--(m1n);
			\draw (mn)--(m1n);
			\draw (mn)--(mn1);
			\draw (mn)--(m-1n);
			\draw (mn)--(mn-1);
			
			\draw[dashed] (m1n)--(mn1)--(m-1n)--(mn-1)--(m1n);
			
			\end{tikzpicture}
			
			&   &
			
			\begin{tikzpicture}[scale=1.6]
			\coordinate [label=below right:$F_{m,n}$] (mn) at (0,0);
			\coordinate [label=right:$\alpha_m$] (m1n) at (1,0);
			\coordinate [label=above :$-\alpha_m + \beta_n$] (m1n1) at (1,1);
			\coordinate [label=above:$-\beta_n$] (mn1) at (0,1);
			\coordinate (m-1n1) at (-1,1);
			\coordinate [label=left:$\phantom{\alpha_{m-1}}0$](m-1n) at (-1,0);
			\coordinate (m-1n-1) at (-1,-1);
			\coordinate [label=below:$0$](mn-1) at (0,-1);
			\coordinate (m1n-1) at (1,-1);
			\draw[dashed] (m1n)--(m1n1)--(mn1)--(m-1n1)--(m-1n)--(m-1n-1)--(mn-1)--(m1n-1)--(m1n);
			\draw (mn)--(m1n);
			\draw (mn)--(mn1);
			\draw (mn)--(m-1n);
			\draw (mn)--(mn-1);
			
			\draw[dashed] (mn1)--(m-1n)--(mn-1)--(m1n);
			\draw (mn)--(m1n1);
			
			\end{tikzpicture}
			
			\\
			\begin{tikzpicture}[scale=1.6]
			\coordinate [label=below right:$\quad F_{m,n}$] (mn) at (0,0);
			\coordinate [label=right:$0$] (m1n) at (1,0);
			\coordinate [label=above :$-\alpha_m + \beta_n$] (m1n1) at (1,1);
			\coordinate [label=above:$-\beta_n$] (mn1) at (0,1);
			\coordinate (m-1n1) at (-1,1);
			\coordinate [label=left:$\phantom{\alpha_{m-1}}0$] (m-1n) at (-1,0);
			\coordinate (m-1n-1) at (-1,-1);
			\coordinate [label=below:$-\beta_{n-1}$] (mn-1) at (0,-1);
			\coordinate [label=below :$\alpha_m + \beta_{n-1}$] (m1n-1) at (1,-1);
			\draw[dashed] (m1n)--(m1n1)--(mn1)--(m-1n1)--(m-1n)--(m-1n-1)--(mn-1)--(m1n-1)--(m1n);
			\draw (mn)--(m1n);
			\draw (mn)--(mn1);
			\draw (mn)--(m-1n);
			\draw (mn)--(mn-1);
			
			\draw[dashed] (mn1)--(m-1n)--(mn-1);
			\draw (mn)--(m1n1);
			\draw (mn)--(m1n-1);
			
			\end{tikzpicture}
			
			&  &
			
			\begin{tikzpicture}[scale=1.6]
			\coordinate [label=below right:$F_{m,n}$] (mn) at (0,0);
			\coordinate [label=right:$\alpha_m$](m1n) at (1,0);
			\coordinate [label=above :$-\alpha_m + \beta_n$] (m1n1) at (1,1);
			\coordinate [label=above:$-\beta_n$] (mn1) at (0,1);
			\coordinate (m-1n1) at (-1,1);
			\coordinate [label=left: $\alpha_{m-1}$](m-1n) at (-1,0);
			\coordinate [label=below:$-\alpha_{m-1}+\beta_{n-1}$] (m-1n-1) at (-1,-1);
			\coordinate [label=below right:$-\beta_{n-1}$] (mn-1) at (0,-1);
			\coordinate (m1n-1) at (1,-1);
			\draw[dashed] (m1n)--(m1n1)--(mn1)--(m-1n1)--(m-1n)--(m-1n-1)--(mn-1)--(m1n-1)--(m1n);
			\draw (mn)--(m1n);
			\draw (mn)--(mn1);
			\draw (mn)--(m-1n);
			\draw (mn)--(mn-1);
			
			\draw[dashed] (mn1)--(m-1n);
			\draw[dashed] (m1n)--(mn-1);
			\draw (mn)--(m1n1);
			\draw (mn)--(m-1n-1);
			
			\end{tikzpicture}
			
			\\
			\begin{tikzpicture}[scale=1.6]
			\coordinate [label=below right:$\quad F_{m,n}$] (mn) at (0,0);
			\coordinate [label=right:$0$] (m1n) at (1,0);
			\coordinate [label=above :$-\alpha_m + \beta_n$] (m1n1) at (1,1);
			\coordinate [label=above:$-\beta_n$] (mn1) at (0,1);
			\coordinate (m-1n1) at (-1,1);
			\coordinate [label=left:$\alpha_{m-1}$] (m-1n) at (-1,0);
			\coordinate [label=below:$-\alpha_{m-1}+\beta_{n-1}$](m-1n-1) at (-1,-1);
			\coordinate [label=below :$0$](mn-1) at (0,-1);
			\coordinate [label=below:$\alpha_m - \beta_{n-1}$] (m1n-1) at (1,-1);
			\draw[dashed] (m1n)--(m1n1)--(mn1)--(m-1n1)--(m-1n)--(m-1n-1)--(mn-1)--(m1n-1)--(m1n);
			\draw (mn)--(m1n);
			\draw (mn)--(mn1);
			\draw (mn)--(m-1n);
			\draw (mn)--(mn-1);
			
			\draw [dashed] (mn1)--(m-1n);
			\draw (mn)--(m1n-1);
			\draw (mn)--(m1n1);
			\draw (mn)--(m-1n-1);
			
			\end{tikzpicture}
			
			&  &
			
			\begin{tikzpicture}[scale=1.6]
			\coordinate [label=below right:$\quad F_{m,n}$] (mn) at (0,0);
			\coordinate [label=right:$0$](m1n) at (1,0);
			\coordinate [label=above :$-\alpha_m + \beta_n$] (m1n1) at (1,1);
			\coordinate [label=above:$0$](mn1) at (0,1);
			\coordinate [label=above: $\alpha_{m-1}-\beta_n$] (m-1n1) at (-1,1);
			\coordinate [label=left:$\phantom{\alpha_{m-1}}0$](m-1n) at (-1,0);
			\coordinate [label=below: $-\alpha_m+\beta_{n-1}$](m-1n-1) at (-1,-1);
			\coordinate [label=below:$0$](mn-1) at (0,-1);
			\coordinate [label=below: $\alpha_m - \beta_{n-1}$](m1n-1) at (1,-1);
			\draw[dashed] (m1n)--(m1n1)--(mn1)--(m-1n1)--(m-1n)--(m-1n-1)--(mn-1)--(m1n-1)--(m1n);
			\draw (mn)--(m1n);
			\draw (mn)--(mn1);
			\draw (mn)--(m-1n);
			\draw (mn)--(mn-1);
			
			\draw (mn)--(m-1n1);
			\draw (mn)--(m1n-1);
			\draw (mn)--(m1n1);
			\draw (mn)--(m-1n-1);
			
			\end{tikzpicture}
			
		\end{tabular}
		\caption{The six types of triangulations around a vertex $F_{m,n}$ and the corresponding change of mean curvature on the edges.}
		\label{table}
	\end{table} 
\end{proof}
\begin{remark}
	Although the infinitesimal rotations on faces depend on the triangulation, the deformations of the edges already present in the quad mesh do not. For example, the change of the edge $F_{m+1,n} - F_{m,n}$ is given by
	\begin{align*}
		(\dot{F}_{m+1,n} - \dot{F}_{m,n}) &= (F_{m+1,n} - F_{m,n}) \times F^*_{m+1,n} \\ &= (F_{m+1,n} - F_{m,n}) \times F^*_{m,n} \\ &= (F_{m+1,n} - F_{m,n}) \times \frac{F^*_{m+1,n} + F^*_{m,n}}{2}.
	\end{align*}
	Here we have used $(F^*_{m+1,n} - F^*_{m,n}) \parallel (F_{m+1,n} - F_{m,n})$. Moreover, the quadrilaterals do not stay con-circular under the infinitesimal deformation.
\end{remark}
The infinitesimal isometric deformation defined above has an exact counterpart in the smooth theory. Given a simply connected isothermic surface $f$ and its Christoffel dual $f^*$, there exists an infinitesimal isometric deformation $\dot{f}$ satisfying $d\!\dot{f}=d\!f \times f^*$ (Section \ref{sec:smooth}). It preserves the mean curvature but does not preserve curvature lines. If in addition the curvature lines were preserved, the shape operator would remain unchanged and the deformation would be trivial, i.e. an infinitesimal Euclidean transformation.

\section{Example: Homogeneous discrete cylinders}\label{sec:cylinder}
In this section we show that every homogeneous triangulation of a circular cylinder in $\mathbb{R}^3$ is isothermic. Here ``homogeneous'' means that there is a subgroup of Euclidean transformations that acts transitively at vertices and respects the combinatorics. Note that in general none of the edges of such an isothermic discrete cylinder is aligned with the curvature line directions of the underlying smooth cylinder.

We consider the group $G$ of all Euclidean motions that fix the $z$-axis. Every element $g \in G$ is of the form that acts on a point $p \in \field{R}^3$ as
\[
g(p) = \left( \begin{array}{ccc} \cos \theta & \sin \theta & 0\\ -\sin \theta & \cos \theta & 0\\
0 & 0 & 1 
\end{array} \right) p + \left( \begin{array}{c} 0 \\ 0 \\ h
\end{array}\right)
\]
where $\theta,h \in \field{R}$.

We pick two elements $g_1,g_2$ of $G$ in general position and consider the group $H$ generated by $g_1,g_2$. For a generic choice of $g_1,g_2$ the group $H$ is isomorphic to $\field{Z}^2$. An element $(s,t) \in \field{Z}^2$ corresponds to the element $g_1^s g_2^t \in H$. 

We also consider $\field{Z}^2$ as the vertex set of a triangulated surface with faces of the form $\{(s,t),(s+1,t),(s,t+1)\}$ or $\{(s+1,t),(s+1,t+1),(s,t+1)\}$ (Figure \ref{fig:z2}).
\begin{figure}[ht]
	\centering
	\begin{tikzpicture}[scale=1.5]
	\tikzstyle{every node}=[font=\scriptsize]
	
	\coordinate [label=above right: {(s,t)}] (mn) at (0,0);
	\coordinate [label=above right: {(s+1,t)}] (m1n) at (1,0);
	\coordinate [label=above right: {(s+1,t+1)}] (m1n1) at (1,1);
	\coordinate [label=above right: {(s,t+1)}] (mn1) at (0,1);
	\coordinate  (m-1n1) at (-1,1);
	\coordinate (m-1n) at (-1,0);
	\coordinate (m-1n-1) at (-1,-1);
	\coordinate (mn-1) at (0,-1);
	\coordinate (m1n-1) at (1,-1);
	\coordinate (m2n) at (2,0);
	\coordinate (m2n1) at (2,1);
	\coordinate (m2n2) at (2,2);
	\coordinate (m1n2) at (1,2);
	\coordinate (mn2) at (0,2);
	\coordinate (m-1n2) at (-1,2);
	\coordinate (m2n-1) at (2,-1);
	
	\draw (m2n-1)--(m2n)--(m2n1)--(m2n2)--(m1n2)--(mn2)--(m-1n2);
	\draw (m1n-1)--(m2n-1)--(m1n)--(m2n)--(m1n1)--(m2n1)--(m1n2)--(m1n1)--(mn2)--(mn1)--(m-1n2)--(m-1n-1);
	\draw (m1n)--(m1n1)--(mn1)--(m-1n1)--(m-1n)--(m-1n-1)--(mn-1)--(m1n-1)--(m1n);
	\draw (mn)--(m1n);
	\draw (mn)--(mn1);
	\draw (mn)--(m-1n);
	\draw (mn)--(mn-1);
	
	\draw (mn)--(m-1n1);
	\draw (mn)--(m1n-1);
	\draw (mn1)--(m1n);
	\draw (m-1n)--(mn-1);
	
	\end{tikzpicture}
	\caption{A triangulated surface with vertex set $\field{Z}^2$}
	\label{fig:z2}
\end{figure}

We now define a map $f:\mathbb{Z}^2 \to \field{R}^3$ by picking $r>0$ and setting
\[
f(s,t)= g_1^sg_2^t(r,0,0).
\]
For suitable $g_1,g_2 \in G$ this map $f$ will be a non-degenerate realization. Figure \ref{fig:cylinder} shows a piece of such a discrete surface.

\begin{figure}[ht]
	\centering
	\includegraphics[width=0.8\textwidth]{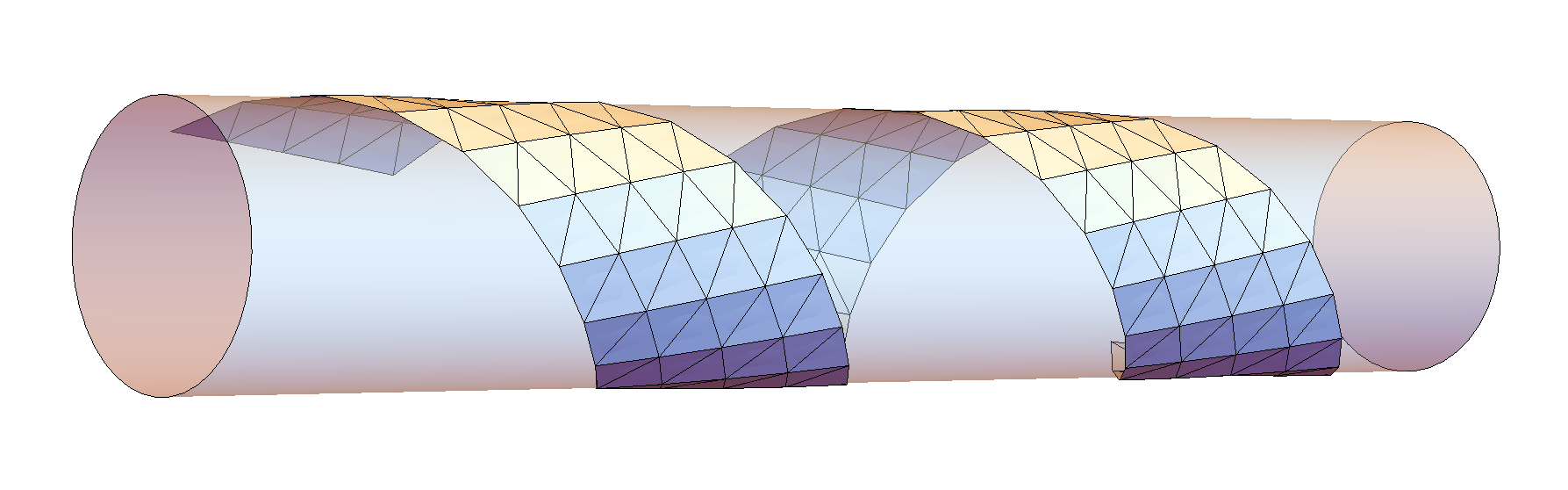}
	\caption{A strip of an isothermic triangulated cylinder}
	\label{fig:cylinder}
\end{figure}

We now prove that realizations $f:\field{Z}^2 \to \field{R}^3$ constructed as above are isothermic. We will do this by showing that they admit a non-trivial infinitesimal isometric deformation preserving the integrated mean curvature. Note that up to symmetry there are only three types of edges, represented by $\{f(0,0),f(1,0)\}$, $\{f(1,0),f(0,1)\}$ and $\{f(0,1),f(0,0)\}$. We denote their lengths by
\[\ell_a (r,\theta_1,h_1,\theta_2,h_2)\quad ,\quad \ell_b (r,\theta_1,h_1,\theta_2,h_2) \quad,\quad \ell_c (r,\theta_1,h_1,\theta_2,h_2).\]
The integrated mean curvature is the same at all vertices. We denote it by \[ H(r,\theta_1,h_1,\theta_2,h_2).\] Now the derivative of the map $\mu:=(\ell_a,\ell_b, \ell_c,H):\mathbb{R}^5 \to \mathbb{R}^4$ has a non-trivial kernel at every point $(r,\theta_1,h_1,\theta_2,h_2)\in\mathbb{R}^5$. Moreover, it is easy to see that any non-zero element
\[(\dot{r},\dot{\theta}_1,\dot{h}_1,\dot{\theta}_2,\dot{h}_2) \in \mbox{ker}\,d\mu \]
corresponds to an infinitesimal deformation of $f$ which is not induced from Euclidean transformations. This infinitesimal deformation preserves all the edge lengths and the integrated mean curvature around vertices. Therefore the triangulated cylinder $f$ is isothermic.

\section{Example: Planar triangular meshes}\label{sec:harm}

In this section we show that certain planar triangular meshes are isothermic. For a simply connected surface, we know that a realization is isothermic if and only if there exists a non-trivial infinitesimal isometric deformation preserving the integrated mean curvature (Corollary \ref{cor:infiniterigidmean}). We will see that every such deformation of a planar triangular mesh is given by a discrete harmonic function in the sense of the cotangent Laplacian  \cite{McNeal1946,Smirnov2010}.   

\begin{theorem}\label{thm:planarisothermic}
	Suppose $f:V \to \field{R}^2 \subset \field{R}^3$ is a non-degenerate realization of a triangulated surface with Euler Characteristic $\chi$ and $|V_b|$ boundary vertices. Then $f$ is isothermic if $ |V_b| - 3\chi > 0$.
\end{theorem}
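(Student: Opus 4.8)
The plan is to reduce the isothermic condition to a homogeneous linear system in a single real-valued function on interior edges, and then to show by a dimension count that this system has a nontrivial solution precisely when $|V_b|-3\chi>0$. The crucial feature is that $f$ is planar. By Definition \ref{def:isoth} together with Lemma \ref{lem:eqdef}, a realization is isothermic exactly when there is a nonzero $k:E_{int}\to\field{R}$ with
\begin{align*}
	\sum_j k_{i\!j}\, d\!f(e_{i\!j}) &= 0 \quad \forall i \in V_{int}, \\
	\sum_j k_{i\!j}\,(|f_j|^2 - |f_i|^2) &= 0 \quad \forall i \in V_{int}.
\end{align*}
The cross-product equation \eqref{eq:cross} is automatically encoded by setting $\tau(e^*_{i\!j}):=k_{i\!j}\,d\!f(e_{i\!j})$, and since $d\!f(e_{i\!j})\neq 0$ by non-degeneracy, a nonzero $k$ produces a nonzero $\tau$. (One only checks that $k_{i\!j}=k_{\!ji}$, so that $\tau$ is a well-defined dual $1$-form; this is built into Lemma \ref{lem:eqdef}.)

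The point of planarity is that $d\!f(e_{i\!j})$ lies in $\field{R}^2$, so the first (vector) condition contributes only \emph{two} scalar equations per interior vertex rather than three; this is exactly the geometric input that makes the bound favorable. First I would view the two displayed conditions as a homogeneous linear system of $3|V_{int}|$ equations in the $|E_{int}|$ unknowns $k_{i\!j}$. A nontrivial solution exists as soon as there are more unknowns than equations, that is, whenever $|E_{int}|>3|V_{int}|$.

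It then remains to translate $|E_{int}|-3|V_{int}|$ into the topological data of the statement. Using that each boundary component is a cycle, so $|E_b|=|V_b|$, the triangle--edge incidence relation $3|F|=2|E_{int}|+|E_b|$, and Euler's formula $\chi=|V|-|E|+|F|$ with $|V|=|V_{int}|+|V_b|$ and $|E|=|E_{int}|+|E_b|$, a short computation gives
\[
	|E_{int}| - 3|V_{int}| = |V_b| - 3\chi .
\]
Hence $|V_b|-3\chi>0$ forces $|E_{int}|>3|V_{int}|$, the linear system has a nonzero solution $k$, and $f$ is isothermic.

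I expect the only genuine subtlety here to be bookkeeping rather than depth: one must count equations correctly — crucially using planarity to obtain two, not three, equations from closedness — and carry out the Euler-characteristic manipulation cleanly. The conceptually richer task, though strictly unnecessary for this sufficiency statement, is to identify the solutions $k$ with discrete harmonic functions for the cotangent Laplacian: the cotangent weights themselves solve the first condition (the identity map of a planar mesh is discrete harmonic), so via the self-stress / Maxwell--Cremona picture the remaining freedom is governed by a discrete harmonic function. This is the viewpoint that later supplies the Weierstrass data for discrete minimal surfaces, but it is not needed to establish the inequality above.
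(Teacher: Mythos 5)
Your proposal is correct and follows essentially the same route as the paper: reduce to the linear system of Lemma \ref{lem:eqdef} in the unknown $k:E_{int}\to\field{R}$, use planarity to count only $2|V_{int}|$ scalar equations from the closedness condition plus $|V_{int}|$ from the scalar condition, and convert $|E_{int}|-3|V_{int}|$ into $|V_b|-3\chi$ via $|E_b|=|V_b|$, the triangle--edge incidence relation, and Euler's formula. The closing remark about discrete harmonic functions is exactly the content of the paper's subsequent theorem and is, as you say, not needed here.
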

\begin{proof}
	Since each boundary component is a simple closed polygon, the number of boundary edges is $|E_b|=|V_b|$. The Euler characteristic is given by
	\[
	|V|- |E| +|F|= \chi.
	\]
	Since the surface is triangulated, we have
	\[
	3|F| = 2|E|-|E_b|.
	\]
	Hence
	\begin{align*}
		|E_{int}| - 3|V_{int}| = |V_b| - 3\chi.	
	\end{align*}
	By Lemma \ref{lem:eqdef}, a dual 1-form satisfying Definition \ref{def:isoth} is equivalent to a function $k:E_{int} \to \field{R}^3$ such that
	for every interior vertex $i$
	\begin{align}
		\sum_j k_{ij} d\!f(e_{ij}) &=0  \label{eq:1}\\
		\sum_j k_{ij} (|f_j|^2-|f_i|^2) &=0  \label{eq:2}
	\end{align}
	which is a system of linear equations. By simple counting and using the fact that $f$ is planar we obtain a lower bound for the dimension of the solution space  
	\[
	\dim\{k:E_{int}\to \field{R} \text{ satisfying } \eqref{eq:1} \eqref{eq:2} \} \geq |E_{int}|-2|V_{int}|-|V_{int}| = |V_b| - 3\chi.
	\]
	Hence $f$ is isothermic if $|V_b| - 3\chi >0$. 
\end{proof}
In particular the above theorem implies that every planar triangulated disk ($\chi=1$) with more than 3 boundary vertices is isothermic. Since a disk is simply connected, by Corollary \ref{cor:infiniterigidmean} there exists a non-trivial infinitesimal isometric deformation preserving the integrated mean curvature. The following indicates how to obtain such infinitesimal deformations.
\begin{theorem}
	Let $f:V \to \field{R}^2 \subset \field{R}^3$ be a strongly non-degenerate realization of a triangulated surface with normal $N\in \field{S}^2$ and $u:V \to \field{R}$ be a function. Then the infinitesimal isometric deformation $\dot{f}:=u N$ preserves the integrated mean curvature if and only if $u$ is a discrete harmonic function in the sense of the cotangent Laplacian, i.e. for every interior vertex $i$
	\[
	\sum_j (\cot \beta_{i\!j}^k + \cot \beta_{ji}^l) (u_j - u_i) =0. 
	\]
	where $\beta_{i\!j}^k \in (-\pi,\pi)$ denotes the angle $\angle jki$ of the triangle $\{i\!jk\}$ under the image of $f$ (Figure \ref{fig:orientation}).
	\begin{figure}[h]
		\centering
		\def\svgwidth{0.4\textwidth}
		\resizebox{0.4\textwidth}{!}{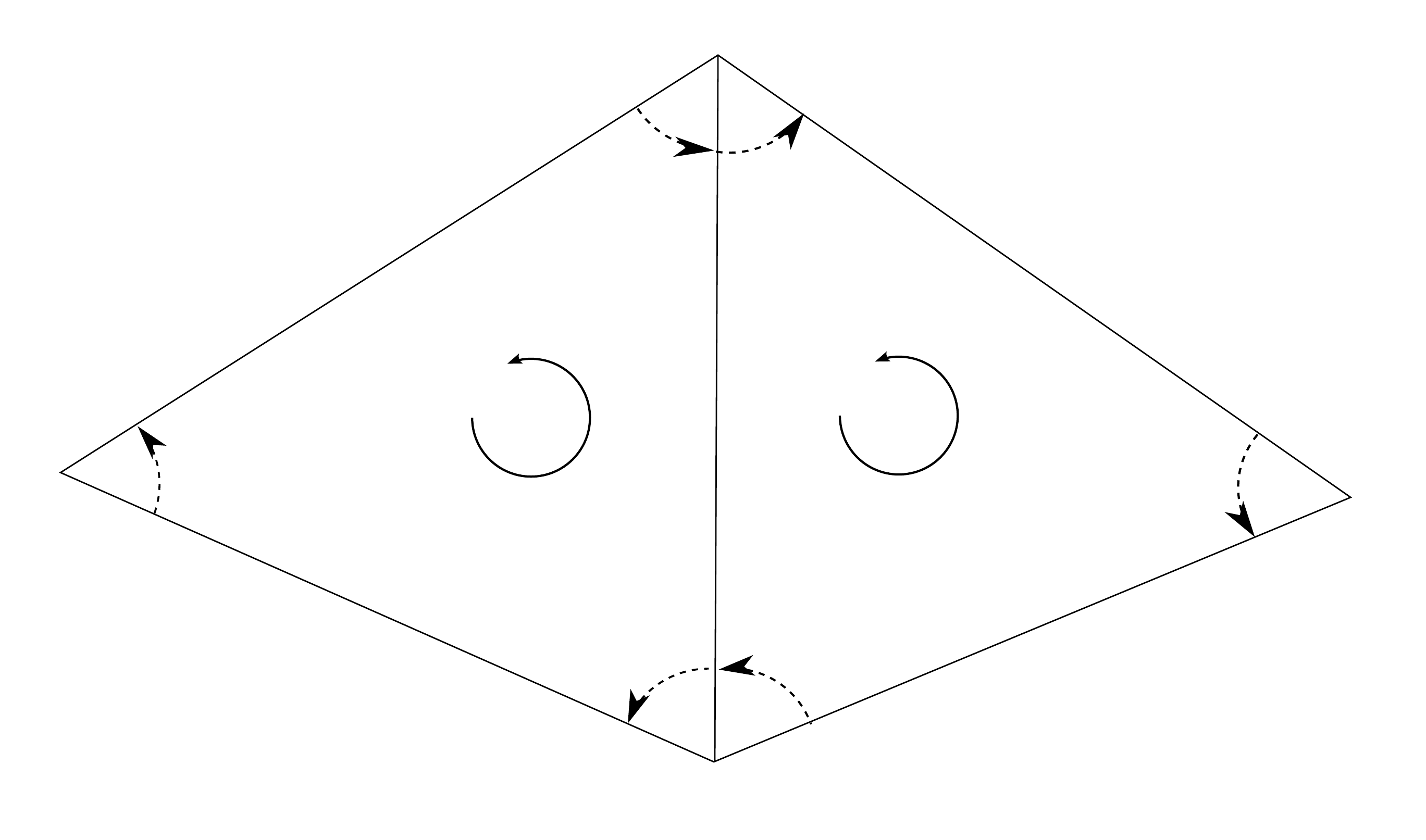}
		\caption{Two neighboring and oriented triangles sharing edge $\{i\!j\}$.}
		\label{fig:orientation}
	\end{figure}
\end{theorem}

\begin{proof}
	It is easy to see that for any function $u: V \to \field{R}$ the infinitesimal deformation $\dot{f} = uN$ preserves edge lengths. The infinitesimal rotation of a face $\{i\!jk\}$ is given by
	\[
	Z_{i\!jk} = -\frac{u_i d\!f(e_{\!jk})+ u_j d\!f(e_{ki})+u_k d\!f(e_{i\!j})}{2 A_{i\!jk}}
	\]
	where $A_{i\!jk}:= \langle (df(e_{i\!j})\times df(e_{ik}))/2,N \rangle$ is the (signed) area of the triangle $\{i\!jk\}$. This follows from
	\[
	d\dot{f}(e_{i\!j}) = u_j N - u_i N = d\!f(e_{i\!j}) \times Z_{i\!jk}.
	\]
	To preserve the integrated mean curvature around vertices, the map $Z: F \to \field{R}^3$ has to satisfy for every interior vertex $i$
	\begin{equation}
		\label{eq:meanzero}
		0 = \sum_{\{i\!j\}\in E:i} \langle d\!f(e_{i\!j}), Z_{i\!jk} - Z_{\!jil} \rangle = \sum_{\{ijk\}\in F:i} \langle d\!f(e_{k\!j}), Z_{i\!jk}\rangle.
	\end{equation}
	Note that
	\begin{align*}
		\langle d\!f(e_{k\!j}), Z_{i\!jk}\rangle =& \langle d\!f(e_{k\!j}), -\frac{u_i d\!f(e_{jk})+ u_j d\!f(e_{ki})+u_k d\!f(e_{i\!j})}{2 A_{i\!jk}}\rangle \\
		=& \langle d\!f(e_{\!jk}), \frac{ (u_j-u_i) d\!f(e_{ki})+(u_k-u_i) d\!f(e_{i\!j})}{2 A_{i\!jk}}\rangle \\
		=& -\cot \beta^k_{ij} (u_j -u_i) - \cot \beta^l_{ji}(u_k-u_i).
	\end{align*}
	Thus \eqref{eq:meanzero} is equivalent to saying that for every interior vertex $i $
	\[
	0 = 
	\sum_{\{i\!j\} \in E:i} (\cot \beta_{i\!j}^k + \cot \beta_{ji}^l) (u_j - u_i).
	\]
	Hence the infinitesimal deformation $uN$ preserves the integrated mean curvature if and only if $u$ is a discrete harmonic function.
\end{proof}
In fact, every infinitesimal isometric deformation of a strongly non-degenerate planar triangular mesh is of the form $u\,N$ for some function $u$ modulo infinitesimal Euclidean motions.

It can be checked that an infinitesimal normal deformation $u\,N$ of a planar mesh $f$ is an Euclidean motion if and only if $u$ is a linear function, i.e. if there exists a vector $a \perp N $ and a constant $c \in \field{R}$ such that
\[
u = \langle a,f \rangle + c.
\]
\section{Example: Inscribed triangular meshes}\label{sec:inscribedmesh}

Since the notion of isothermic triangulated surfaces is M\"{o}bius invariant (Theorem \ref{thm:mob}), our results for planar triangular meshes can be rephrased for triangular meshes inscribed in a sphere. Theorem \ref{thm:planarisothermic} immediately implies the following.
\begin{corollary}
	Suppose $f:V \to \field{S}^2$ is a non-degenerate realization of a triangulated surfaces with Euler Characteristic $\chi$ and $|V_b|$ boundary vertices. Then $f$ is isothermic if $|V_b| - 3\chi >0$.
\end{corollary}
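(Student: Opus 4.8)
The plan is to reduce the spherical case to the planar case already settled in Theorem~\ref{thm:planarisothermic}, and then to promote the conclusion back to the sphere via the Möbius invariance of Theorem~\ref{thm:mob}. The transformation I would use is stereographic projection, which is the restriction to $\field{S}^2$ of an inversion of $\field{R}^3 \cup \{\infty\}$ centered at a point of the sphere, and hence a Möbius transformation in exactly the sense of Section~\ref{sec:mob}.

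First I would choose a point $p \in \field{S}^2$ distinct from all the vertices $f_i$; such a $p$ exists because $V$ is finite. Let $\sigma$ be a Möbius transformation of $\field{R}^3 \cup \{\infty\}$ whose restriction to $\field{S}^2 \setminus \{p\}$ is stereographic projection onto a plane, which after composing with a Euclidean motion we may take to be $\field{R}^2 \subset \field{R}^3$. Since $p$ is not a vertex, $\sigma$ sends no vertex to infinity, so $\sigma \circ f$ is legitimately defined as in Section~\ref{sec:mob}: the realization whose vertices are $(\sigma \circ f)_i := \sigma(f_i) \in \field{R}^2$, with affine faces. Because $\sigma$ is injective, $f_i \neq f_j$ forces $(\sigma \circ f)_i \neq (\sigma \circ f)_j$ on every edge, so $\sigma \circ f$ is a non-degenerate \emph{planar} realization of the same triangulated surface.

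Next I would note that the combinatorial quantities entering the hypothesis of Theorem~\ref{thm:planarisothermic} are untouched by $\sigma$: the simplicial complex $M = (V,E,F)$ is unchanged, hence the Euler characteristic $\chi$ and the number of boundary vertices $|V_b|$ are the same for $\sigma \circ f$ as for $f$. Thus the assumption $|V_b| - 3\chi > 0$ holds for $\sigma \circ f$, and Theorem~\ref{thm:planarisothermic} shows that $\sigma \circ f$ is isothermic. Since $\sigma^{-1}$ is again a Möbius transformation and $f = \sigma^{-1} \circ (\sigma \circ f)$, Theorem~\ref{thm:mob} then gives that $f$ itself is isothermic.

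I do not expect a genuine obstacle here, as the argument is essentially bookkeeping. The only points requiring care are the choice of $p$ away from the vertices, so that the construction of $\sigma \circ f$ from Section~\ref{sec:mob} applies and no vertex escapes to infinity, and the observation that Theorem~\ref{thm:planarisothermic} asks only for non-degeneracy (distinct edge endpoints) rather than strong non-degeneracy; both are immediate from the injectivity of $\sigma$ and the finiteness of $V$.
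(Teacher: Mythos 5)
Your proposal is correct and takes essentially the same route as the paper: the paper likewise deduces the spherical case from Theorem~\ref{thm:planarisothermic} via the M\"obius invariance of Theorem~\ref{thm:mob}, with stereographic projection as the (implicit) M\"obius transformation carrying the inscribed mesh to a planar one. Your additional care in choosing the projection point away from the vertices and noting that the combinatorial data $\chi$ and $|V_b|$ are unchanged is just the bookkeeping the paper leaves implicit.
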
  
In particular, it follows that every inscribed triangulated disk with more than 3 boundary vertices is isothermic. This is analogous to the fact that disks immersed smoothly in a sphere are isothermic.

On the other hand, to show that a triangulated surface is isothermic, we can look for a non-trivial infinitesimal isometric deformation preserving the integrated mean curvature. For an inscribed triangulated surface, we will see that it suffices to find an infinitesimal isometric deformation. 

The following lemma shows that for any triangular mesh inscribed in a sphere a dual 1-form $\tau$ satisfying Equation \eqref{eq:closed} and \eqref{eq:cross} in Definition \ref{def:isoth} will satisfy Equation \eqref{eq:sum} automatically.
\begin{lemma}
	Given a non-degenerate realization $f:V \to \field{S}^2$ of a triangulated surface and a function $k:E_{int} \to \field{R}$, then for every interior vertex $i$
	\[
	\sum_j k_{ij} d\!f(e_{i\!j}) =0  \quad \implies \quad \sum_j k_{ij} |d\!f(e_{i\!j})|^2 =0.
	\] 
\end{lemma}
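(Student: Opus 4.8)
The plan is to exploit the fact that all vertices lie on the unit sphere, so that $|f_i|^2 = 1$ for every vertex $i$. This makes the quantity $|f_j|^2 - |f_i|^2$ vanish identically, which is precisely the bridge between the hypothesis and the conclusion. First I would expand $|d\!f(e_{i\!j})|^2 = |f_j - f_i|^2$ using the bilinearity of the inner product, writing
\[
|d\!f(e_{i\!j})|^2 = |f_j|^2 - 2\langle f_j, f_i \rangle + |f_i|^2 = \langle f_j - f_i, f_j - f_i \rangle.
\]
The key algebraic manipulation is to rewrite $|f_j - f_i|^2$ in a way that separates a term proportional to $d\!f(e_{i\!j})$ from a term proportional to $|f_j|^2 - |f_i|^2$, exactly as in the computation inside the proof of Lemma \ref{lem:eqdef}.

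Concretely, I would use the identity
\[
|f_j - f_i|^2 = |f_j|^2 - |f_i|^2 - 2\langle f_j - f_i, f_i \rangle,
\]
which rearranges the expansion above. Multiplying by $k_{ij}$ and summing over the neighbors $j$ of a fixed interior vertex $i$ gives
\[
\sum_j k_{ij} |d\!f(e_{i\!j})|^2 = \sum_j k_{ij}(|f_j|^2 - |f_i|^2) - 2\Big\langle \sum_j k_{ij}\, d\!f(e_{i\!j}),\, f_i \Big\rangle.
\]
Now I would invoke the hypothesis $\sum_j k_{ij}\, d\!f(e_{i\!j}) = 0$, which kills the second term on the right outright, leaving $\sum_j k_{ij} |d\!f(e_{i\!j})|^2 = \sum_j k_{ij}(|f_j|^2 - |f_i|^2)$.

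The final step is to use the inscribed condition $f: V \to \mathbb{S}^2$, which forces $|f_j|^2 = |f_i|^2 = 1$ for all vertices, so that each summand $k_{ij}(|f_j|^2 - |f_i|^2)$ is zero. Hence the right-hand side vanishes and we conclude $\sum_j k_{ij} |d\!f(e_{i\!j})|^2 = 0$, as desired. I do not anticipate any serious obstacle here: the only subtlety is making sure the algebraic identity is applied with the correct signs, and recognizing that the hypothesis is exactly what is needed to discard the inner-product term before the sphere condition finishes the job. This is essentially a specialization of the identity already established in Lemma \ref{lem:eqdef}, now with the simplification afforded by a constant radius.
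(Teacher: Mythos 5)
Your proof is correct and follows essentially the same computation as the paper: both expand $|f_j - f_i|^2$, use the hypothesis $\sum_j k_{ij}\,d\!f(e_{i\!j})=0$ to kill the inner-product term, and use $|f_i|=|f_j|=1$ to kill the rest. The paper merely applies the sphere condition first and the hypothesis second, so the two arguments differ only in bookkeeping.
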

\begin{proof}
	Since $|f|\equiv 1$, we have
	\begin{align*}
		\sum_{j} k_{i\!j} |d\!f(e_{i\!j})|^2 = \sum_{j} k_{i\!j} (2|f_i|^2 - 2\langle f_i,f_{\!j} \rangle) = 2 \langle f_i,\sum_{j}k_{i\!j} (f_i - f_{\!j}) \rangle =0.
	\end{align*} 
\end{proof}
\begin{theorem}\label{thm:sphiso}
	Suppose $f:V \to \field{S}^2$ is a strongly non-degenerate realization of a triangulated surface $M$. Then every infinitesimal isometric deformation preserves the integrated mean curvature. 
	
	Hence if $f$ is infinitesimally flexible, then it is isothermic. If $f$ is isothermic and $M$ is simply connected, then $f$ is infinitesimally flexible.
\end{theorem}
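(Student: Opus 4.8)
The plan is to prove the first assertion — that every infinitesimal isometric deformation preserves the integrated mean curvature — directly from the compatibility condition \eqref{eq:dihe} together with the preceding lemma, and then to read off the two equivalences by translating between the face angular velocities $Z:F\to\field{R}^3$ and the dual $1$-form $\tau$ of Definition \ref{def:isoth}.

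First I would take an arbitrary infinitesimal isometric deformation $\dot{f}$ and pass to the angular velocities $Z_{i\!jk}\in\field{R}^3$ of the faces, which are well defined because $f$ is strongly non-degenerate, so that $d\!\dot{f}(e_{i\!j})=d\!f(e_{i\!j})\times Z_{i\!jk}$ as in \eqref{eq:dihe}. I then set $\tau(e^*_{i\!j}):=Z_{i\!jk}-Z_{\!jil}=dZ(e^*_{i\!j})$. By construction $\tau$ is exact, hence closed, so $\sum_j\tau(e^*_{i\!j})=0$ holds automatically by telescoping around each interior vertex; this is \eqref{eq:closed}. The compatibility condition \eqref{eq:dihe} on the shared edge $\{i\!j\}$ says exactly that $d\!f(e_{i\!j})\times\tau(e^*_{i\!j})=0$, which is \eqref{eq:cross}, and (using $d\!f(e_{i\!j})\neq 0$) lets me write $\tau(e^*_{i\!j})=k_{i\!j}\,d\!f(e_{i\!j})$ for a scalar function $k:E_{int}\to\field{R}$.

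The crux is then a single application of the lemma above. The change of integrated mean curvature at an interior vertex $i$ is $\dot{H}_i=\sum_j\langle d\!f(e_{i\!j}),\tau(e^*_{i\!j})\rangle=\sum_j k_{i\!j}|d\!f(e_{i\!j})|^2$. Closedness of $\tau$ gives $\sum_j k_{i\!j}\,d\!f(e_{i\!j})=0$, and since $f$ maps into $\field{S}^2$ the lemma upgrades this to $\sum_j k_{i\!j}|d\!f(e_{i\!j})|^2=0$. Hence $\dot{H}_i=0$ at every interior vertex, proving the first assertion. This is the step I expect to carry the whole argument: the sphere condition $|f|\equiv 1$ is precisely what forces \eqref{eq:sum} as soon as \eqref{eq:closed} and \eqref{eq:cross} hold.

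For the two equivalences I would just reverse this dictionary. If $f$ is infinitesimally flexible, I choose a non-trivial isometric $\dot{f}$; the associated $\tau=dZ$ is not identically zero, since otherwise $Z$ would be constant on the (connected) face graph and $\dot{f}$ an infinitesimal Euclidean motion, and by the paragraph above it satisfies all of \eqref{eq:closed}–\eqref{eq:sum}, so $f$ is isothermic — no topological hypothesis is needed here. Conversely, if $f$ is isothermic with non-trivial $\tau$ and $M$ is simply connected, then the closed form $\tau$ is exact, $\tau=dZ$, recovering $Z:F\to\field{R}^3$; the primal $1$-form $\omega(e_{i\!j}):=d\!f(e_{i\!j})\times Z_{i\!jk}$ is well defined on edges by \eqref{eq:cross} and closed on each face because $d\!f$ is closed, so simple connectivity yields $\dot{f}:V\to\field{R}^3$ with $d\!\dot{f}=\omega$. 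This $\dot{f}$ preserves all edge lengths (each face moves rigidly) and is non-Euclidean because $Z$ is non-constant, hence $f$ is infinitesimally flexible. The only place simple connectivity enters is in integrating the two closed forms $\tau$ and $\omega$, which explains why it is required for the converse but not for the forward implication.
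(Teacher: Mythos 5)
Your proposal is correct and follows essentially the same route as the paper: the authors likewise pass from the isometric deformation to the face rotation field $Z$, observe that the compatibility condition forces $dZ(e^*_{i\!j})=k_{i\!j}\,d\!f(e_{i\!j})$, and then invoke the same lemma (that for $f$ into $\field{S}^2$, $\sum_j k_{i\!j}\,d\!f(e_{i\!j})=0$ implies $\sum_j k_{i\!j}\,|d\!f(e_{i\!j})|^2=0$) to conclude $\dot{H}_i=0$. The paper leaves the two final equivalences to the dictionary of Corollary \ref{cor:infiniterigidmean}, which you have simply written out explicitly, including the correct role of simple connectivity in integrating $\tau$ and the primal form $\omega$.
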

\begin{proof}
	Suppose an infinitesimal isometric deformation is given by a rotation vector field $Z:F \to \field{R}^3$. The compatibility condition implies that there exists $k:E_{int}\to \field{R}$ such that on every interior edge $\{i\!j\}$ 
	\[
	(Z_{i\!jk} - Z_{\!jil}) =k_{ij}d\!f(e_{i\!j})
	\] 
	where $\{i\!jk\}$ denotes the left face of $e_{i\!j}$ and $\{\!jil\}$ denotes the right face. The previous lemma yields for any vertex $i \in V_{int}$,
	\[
	\dot{H}_i = \sum_{j} k_{i\!j} |d\!f(e_{i\!j})|^2 = 0.
	\]
	Hence the integrated mean curvature is preserved. 
\end{proof}

\begin{example}
	Jessen's orthogonal icosahedron is obtained from a regular icosahedron by flipping $6$ edges symmetrically without self intersection \cite{Jessen1967,Goldberg1978}. Its vertices are exactly those of a regular icosahedron and hence lie on a sphere (Figure \ref{fig:jessen}). It is known to be infinitesimally flexible and thus isothermic.
	
	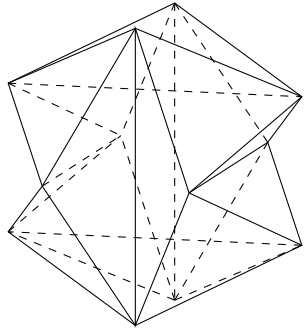
\begin{figure}[ht]
		\centering
		\tdplotsetmaincoords{80}{15}
		\begin{tikzpicture}[scale=1,tdplot_main_coords]
		\coordinate (A) at (1,-2,0); 1
		\coordinate (E) at (-1,-2,0);2
		\coordinate (D) at (2,0,1);3
		\coordinate (H) at (2,0,-1);4
		\coordinate (B) at (0,-1,2);5
		\coordinate (I) at (0,1,2);6
		\coordinate (F) at (-2,0,1);7
		\coordinate (G) at (-2,0,-1);8
		\coordinate (bA) at (1,2,0);9
		\coordinate (bE) at (-1,2,0);10
		\coordinate (C) at (0,-1,-2);11
		\coordinate (bI) at (0,1,-2);12
		
		\draw (A)--(B)--(C)--(A);
		\draw (B)--(E)--(C);
		\draw (A)--(D)--(B);
		\draw (B)--(F)--(E);
		\draw (E)--(G)--(C);
		\draw (A)--(H)--(C);
		\draw (D)--(I)--(F);
		\draw[dashed] (F)--(D);
		\draw (A)--(bA)--(D);
		\draw (H)--(bA);
		\draw[dashed] (E)--(bE);
		\draw[dashed] (I)--(bA);
		\draw[dashed] (I)--(bE);
		\draw[dashed] (I)--(bI);
		\draw[dashed] (bI)--(H);
		\draw[dashed] (bI)--(G);
		\draw[dashed] (bI)--(bA);
		\draw[dashed] (bI)--(bE);
		\draw[dashed] (H)--(G);
		\draw[dashed] (G)--(bE);
		\draw[dashed] (F)--(bE);
		\end{tikzpicture}
		\caption{Jessen's orthogonal icosahedron}
		\label{fig:jessen}
	\end{figure}
\end{example}

Note that the property of being isothermic is M\"{o}bius invariant.

\begin{corollary}
	The infinitesimal rigidity of a non-degenerate simply connected triangulated surface inscribed in a sphere is M\"{o}bius invariant.
\end{corollary}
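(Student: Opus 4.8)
The plan is to reduce the statement about infinitesimal rigidity to the M\"obius invariance of the isothermic property, which is already established in Theorem \ref{thm:mob}. The bridge between the two notions is Theorem \ref{thm:sphiso}: for a strongly non-degenerate realization inscribed in $\field{S}^2$, every infinitesimal isometric deformation automatically preserves the integrated mean curvature. I would therefore first record the precise equivalence that this theorem yields for a \emph{simply connected} inscribed surface, namely that $f$ is infinitesimally flexible if and only if $f$ is isothermic. Indeed, one direction is the implication ``flexible $\Rightarrow$ isothermic'' (a non-trivial infinitesimal isometric deformation preserves the integrated mean curvature, so $f$ is isothermic by Corollary \ref{cor:infiniterigidmean}), and the other is the implication ``isothermic and simply connected $\Rightarrow$ flexible''; both are exactly the two conclusions of Theorem \ref{thm:sphiso}. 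Passing to complements, $f$ is infinitesimally rigid if and only if $f$ is not isothermic.

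Next I would invoke M\"obius invariance. Let $\sigma$ be a M\"obius transformation mapping no vertex to infinity and such that $\sigma\circ f$ is again inscribed in a sphere. Then $\sigma\circ f$ is a strongly non-degenerate simply connected realization inscribed in a sphere (strong non-degeneracy is preserved because $\sigma$ carries three non-collinear points to three non-collinear points), so the equivalence of the previous paragraph applies verbatim to it as well: $\sigma\circ f$ is infinitesimally rigid if and only if $\sigma\circ f$ is not isothermic. By Theorem \ref{thm:mob} the isothermic property is preserved by $\sigma$, so $f$ is isothermic if and only if $\sigma\circ f$ is isothermic. Chaining the three equivalences gives
\[
f \text{ rigid} \iff f \text{ not isothermic} \iff \sigma\circ f \text{ not isothermic} \iff \sigma\circ f \text{ rigid},
\]
which is the assertion.

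The main point to be careful about is the hypothesis that the image remains inscribed in a sphere: a M\"obius transformation sends a sphere to a sphere or to a plane, and the equivalence ``flexible $\iff$ isothermic'' genuinely fails in the planar case (every normal field $uN$ is isometric on a planar mesh, so such meshes are essentially always flexible, whereas isothermicity requires $u$ to be harmonic). Thus the statement must be read within the class of sphere-inscribed surfaces, and the argument applies precisely to those $\sigma$ for which the inversion centre avoids the circumsphere. Apart from this, the only routine checks are that the triviality conventions (deformations induced from Euclidean, respectively M\"obius, transformations) are respected under $\sigma$, which holds because $\sigma$ conjugates the relevant symmetry groups.
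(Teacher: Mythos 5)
Your proposal is correct and takes essentially the same route as the paper: the corollary is stated right after Theorem \ref{thm:sphiso} together with the observation that being isothermic is M\"obius invariant (Theorem \ref{thm:mob}), so the intended argument is exactly your chain ``rigid $\iff$ not isothermic $\iff$ $\sigma\circ f$ not isothermic $\iff$ $\sigma\circ f$ rigid'', with your caveat about transformations sending the circumsphere to a plane being the correct implicit reading of the statement. The only addition in the paper is a remark that the corollary can alternatively be seen as a special case of the projective invariance of infinitesimal rigidity of frameworks, which you do not need.
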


We can regard a M\"{o}bius transformation of a triangulated surface inscribed in a sphere as being induced from a projective transformation of the ambient space. Then the above corollary is simply a special case of the projective invariance of infinitesimal rigidity \cite{Izmestiev2008}.

\section{Discrete minimal surfaces}\label{sec:minimalsurf}

In the smooth theory a minimal surface is the Christoffel dual of its Gau{\ss} map. We give a definition of discrete minimal surfaces that is inspired by this fact.

\begin{definition}
	Given a non-degenerate realization $f:V \to \field{R}^3$ of a triangulated surface, a non-constant map $f^*: F \to \field{R}^3$ is called a \emph{Christoffel dual} of $f$ if
	\begin{align*}
		d\!f(e_{i\!j}) \times d\!f^*(e^*_{i\!j}) &= 0  \quad \forall \{i\!j\} \in E_{int},\\
		\sum_{j} \langle d\!f(e_{i\!j}), d\!f^*(e^*_{i\!j}) \rangle &=0 \quad \forall i \in V_{int}.
	\end{align*}
\end{definition}

Since we know from the previous section that triangulated disks inscribed in a sphere with more than $3$ boundary vertices are isothermic, the following definition is natural.
\begin{definition}
	Given a non-degenerate realization $f:V \to \field{S}^2$ of a triangulated surface, its Christoffel dual $f^*:F \to \field{R}^3$ is called a \emph{discrete minimal surface} with Gauss map $f$.
\end{definition}

Equivalently, a discrete minimal surface is a \emph{reciprocal-parallel mesh} of an inscribed triangulated surface. The combinatorics of a discrete minimal surface is that of the dual cell complex and each dual edge is parallel to the corresponding primal edge. Figure \ref{fig:enneper} shows a discrete minimal surface together with its Gauss map.

Starting with a planar triangulated disk $f$ together with a discrete harmonic function $u$ we obtain a dual 1-form $\tau$ satisfying Definition \ref{def:isoth}. Using Lemma \ref{lem:trans} we can apply a stereographic projection $\Phi$ to $f$ in order to obtain a dual 1-form $\tau_{\Phi}$ for $\Phi\circ f$. Then integrating the dual 1-form $\tau_{\Phi}$ on the dual mesh yields a discrete minimal surface. 
\begin{corollary}\label{thm:discreteminhar}
	Every simply connected discrete minimal surface is given by a discrete harmonic function on a planar triangular mesh.
\end{corollary}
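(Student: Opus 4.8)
The plan is to reverse the construction sketched immediately before the statement. Let $f^*\colon F\to\field{R}^3$ be a simply connected discrete minimal surface; by definition it is the Christoffel dual of its Gauss map $f\colon V\to\field{S}^2$. The first point is that the two defining equations of the Christoffel dual are nothing but the cross and sum conditions \eqref{eq:cross}, \eqref{eq:sum} of Definition \ref{def:isoth} for the dual $1$-form $\tau:=df^*$, while the closedness \eqref{eq:closed} holds automatically because $\tau=df^*$ is exact. Hence the inscribed Gauss map $f$ is isothermic, carrying the dual $1$-form $\tau$.

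I would then flatten the sphere by a stereographic projection $\sigma$ based at a point of $\field{S}^2$ distinct from the finitely many vertices $f_i$, so that $g:=\sigma\circ f\colon V\to\field{R}^2\subset\field{R}^3$ is a (strongly non-degenerate) planar triangular mesh. Because $\sigma$ is M\"obius, Lemma \ref{lem:trans} transports $\tau$ to a dual $1$-form $\tau'$ on $g$ again satisfying Definition \ref{def:isoth}; in particular $\tau'$ is closed, and since $M$ is simply connected it is exact, $\tau'=dZ$ for some $Z\colon F\to\field{R}^3$.

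The heart of the proof is to identify $(g,Z)$ with the data coming from a discrete harmonic function. The cross condition $df(e_{ij})\times(Z_{ijk}-Z_{jil})=0$ for $\tau'=dZ$ is precisely the compatibility condition \eqref{eq:dihe}, so $Z$ is the infinitesimal-rotation field of an infinitesimal isometric deformation $\dot g$ of the planar mesh; here one integrates the primal $1$-form $e\mapsto dg(e)\times Z$, which is closed on each face and hence exact on the simply connected $M$. By the remark following the cotangent-Laplacian theorem of Section \ref{sec:harm}, such a deformation of a strongly non-degenerate planar mesh equals $u\,N$ for some $u\colon V\to\field{R}$, modulo an infinitesimal Euclidean motion. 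Finally the sum condition \eqref{eq:sum} for $\tau'$ says exactly that $\dot g=u\,N$ preserves the integrated mean curvature at every interior vertex, which by the same theorem forces $u$ to be discrete harmonic in the sense of the cotangent Laplacian.

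This closes the loop: from the planar mesh $g$ and the discrete harmonic function $u$ one reconstructs $\tau'=dZ$, applies $\sigma^{-1}$ through Lemma \ref{lem:trans} to recover $\tau=df^*$ on $f$, and integrates to obtain the given minimal surface $f^*$. I expect the main difficulty to be bookkeeping rather than conceptual depth: one must verify that closedness and strong non-degeneracy are preserved under the M\"obius transport of Lemma \ref{lem:trans}, and that the Euclidean-motion ambiguity in writing $\dot g=u\,N$ is harmless, since infinitesimal Euclidean motions change neither the edge lengths nor the integrated mean curvature and therefore do not affect the harmonicity of $u$.
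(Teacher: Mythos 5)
Your argument is correct and follows essentially the same route as the paper: the paper's justification is the paragraph preceding the corollary, which describes the chain (planar mesh, harmonic function) $\to$ dual $1$-form $\to$ stereographic transport via Lemma~\ref{lem:trans} $\to$ integration, and you traverse that chain in reverse, which is exactly what the ``every'' in the statement requires. The one bookkeeping item worth making explicit is that the centre of the stereographic projection should be chosen off the circumcircles of all faces of the Gauss map (not merely off the vertices), so that the planar image is strongly non-degenerate and the fact from Section~\ref{sec:harm} that every infinitesimal isometric deformation of such a planar mesh equals $u\,N$ modulo infinitesimal Euclidean motions is applicable.
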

\begin{figure}[ht]
	\centering
	\includegraphics[width=\textwidth]{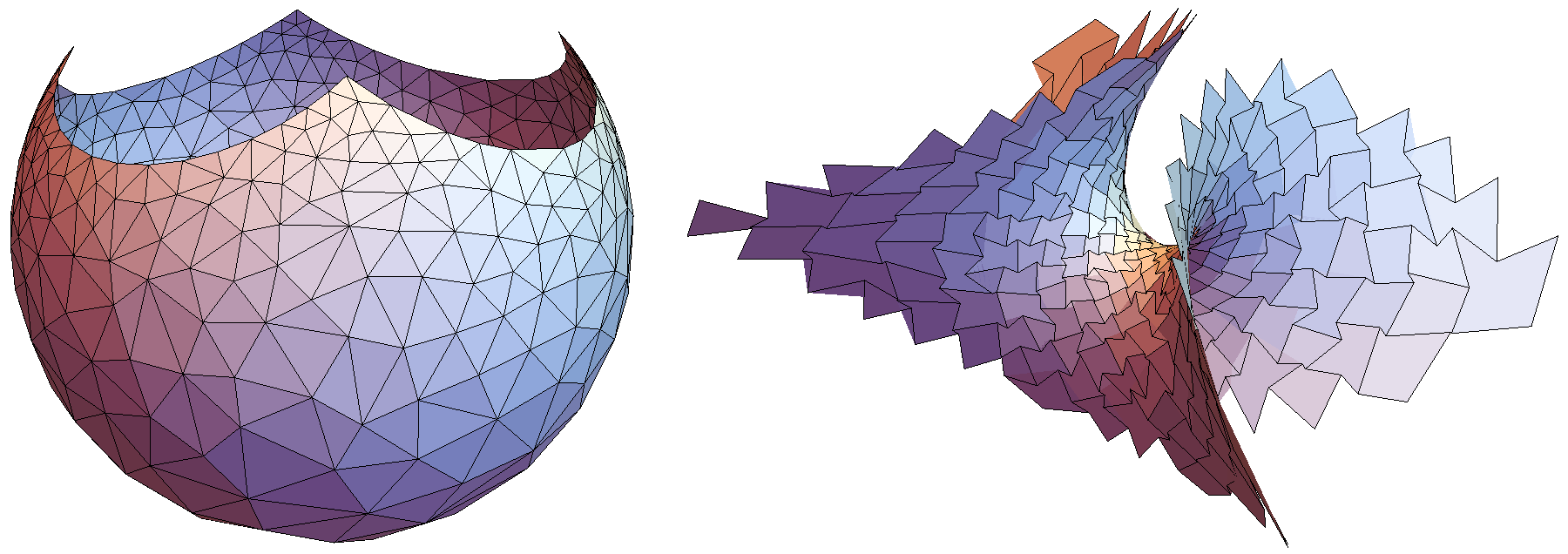}
	\caption{Right: A discrete Enneper surface corresponding to the discrete harmonic function $u|_{\partial M}=xy$. The edges are parallel to the primal edges of the Gau{\ss} image shown on the left.}
	\label{fig:enneper}
\end{figure}

As an example we take a triangulated square $f$ in the $(x,y)$-plane (Figure \ref{fig:enneper} left shows the stereographic projection of $f$). Notice that on a Delaunay triangulated disk $D$ in the plane, a discrete harmonic function $u$ is uniquely determined by its boundary values. The choice $u|_{\partial D}=xy$ leads to a discrete Enneper surface (Figure \ref{fig:enneper} right).
\begin{figure}[ht]
	\includegraphics[width=0.9\textwidth]{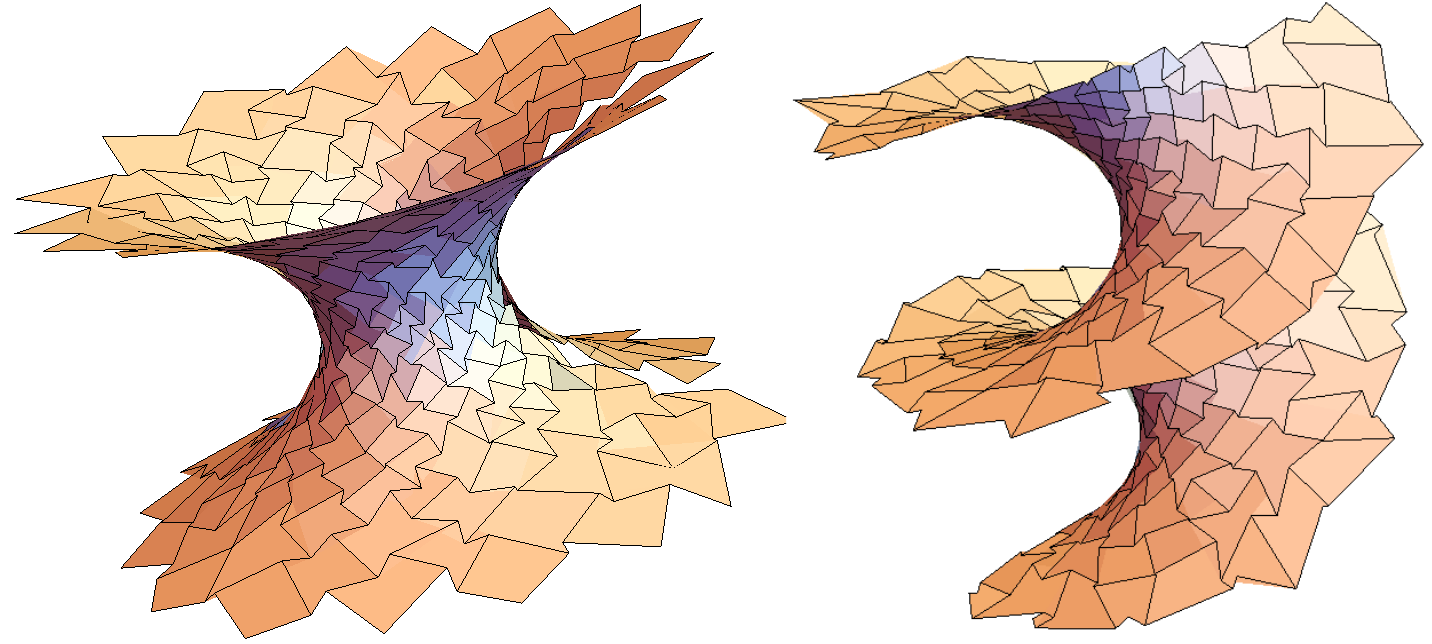}
	\caption{A discrete catenoid and a discrete helicoid.}
	\label{fig:catenoid}
\end{figure}
As a second example, we consider a triangulated annulus centered at the origin with a cut along the positive x-axis. We solve for the discrete harmonic function $u$ with boundary values either given by $u|_{\partial M} = \log |z|$ or by $u|_{\partial M} = \arg z$. We obtain a discrete helicoid and a discrete catenoid (Figure \ref{fig:catenoid}) respectively.

Theorem \ref{thm:discreteminhar} indicates that the Weierstrass data needed to construct a discrete minimal surface consist of a discrete harmonic function on a planar triangular mesh. Its smooth counterpart is the Weierstrass representation for minimal surfaces, which states that each minimal surface locally is given by a pair of holomorphic functions. The choice of such a pair of holomorphic functions is equivalent to prescribing the Gauss map and the Hopf differential of a minimal surface. A similar formula for discrete minimal surfaces is elaborated in \cite{Lam2015}, where a notion of holomorphic quadratic differentials is introduced to link discrete harmonic functions and discrete minimal surfaces by considering infinitesimal conformal deformations. Furthermore, it turns out that our definition of discrete minimal surfaces bridges the gap between earlier notions of discrete minimal surfaces \cite{Lam2015b}.

\section{Smooth analogues} \label{sec:smooth}

The main goal of this section is to prove the smooth analogue of Theorem \ref{thm:sphiso}:
\begin{theorem}\label{thm:sspiso}
	For every infinitesimal isometric deformation of an immersion $f:M \to S^2$, the mean curvature is preserved. 
\end{theorem}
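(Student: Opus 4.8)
The plan is to exploit the one special feature of an immersion into the round sphere: since $|f|\equiv 1$, the position vector is itself a unit normal, so $N=f$ (up to sign) and the image is totally umbilic, with shape operator $-\mathrm{Id}$ and second fundamental form $II=-I$. I work locally and identify a tangent vector with its image under $df$, writing $E_i:=df(e_i)$ for an $I$-orthonormal frame $\{e_1,e_2\}$. The first thing I would record is the classical encoding of the deformation by a \emph{rotation field}: the condition $\dot I=0$ is equivalent to the existence of $Y:M\to\field{R}^3$ with $d\dot f=Y\times df$ (the smooth analogue of the per-face vectors $Z$ in \eqref{eq:dihe}). Its tangential and normal parts I denote $Y^{\top}$ and $\psi:=\langle Y,N\rangle$.

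Because $d\dot f$ is exact it is closed, so $d(Y\times df)=0$. Expanding this in the frame, i.e. $dY(e_1)\times df(e_2)-dY(e_2)\times df(e_1)=0$, and separating tangential from normal components (using $II=-I$), the two tangential equations collapse to the single statement that $Y^{\top}$ is a gradient, $Y^{\top}=\grad\psi$; the normal equation gives $\div Y^{\top}+2\psi=0$, which I will not need. This is the integrability input, and it is automatic here since $Y$ comes from an honest $\dot f$.

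Next I would compute the variation of $H$. The Laplace--Beltrami operator is intrinsic, hence unchanged under an isometric deformation, so differentiating $\Delta_M f=2HN$ gives $\Delta_M\dot f=2\dot H\,N+2H\,\dot N$; taking the $N$-component and using $\langle\dot N,N\rangle=0$ yields the clean formula $2\dot H=\langle\Delta_M\dot f,N\rangle$. Substituting $d\dot f=Y\times df$ and $N=f$ and evaluating in a geodesic frame at a point, the contributions of the normal part $\psi$ and of the umbilic curvature cancel: the terms $\langle Y\times E_i,E_i\rangle$ and $\langle f\times Y,f\rangle$ (the latter arising from the normal part $-f$ of $dE_i(e_i)$) vanish identically, and what survives is exactly the rotation of the tangential field, $2\dot H=\langle\nabla_{e_2}Y^{\top},e_1\rangle-\langle\nabla_{e_1}Y^{\top},e_2\rangle=\curl Y^{\top}$.

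Combining the two computations, $2\dot H=\curl(\grad\psi)=0$ because the Hessian is symmetric (the Levi--Civita connection is torsion-free), so $\dot H=0$ at every point. I expect the main obstacle to be the middle step: correctly carrying out the variation of the mean curvature in terms of the rotation field and checking that every contribution except $\curl Y^{\top}$ drops out, where the umbilicity $II=-I$ and the identity $N=f$ are used repeatedly. The conceptual heart, once that reduction is in place, is simply that integrability of $Y\times df$ forces $Y^{\top}$ to be a gradient, whose curl vanishes; this is precisely the smooth shadow of the discrete cancellation $\sum_j k_{ij}|d\!f(e_{ij})|^2=0$ that drove Theorem \ref{thm:sphiso}.
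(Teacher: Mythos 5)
Your proof is correct, and it reaches the paper's conclusion by a genuinely more elementary route. The paper works inside its quaternionic spin--transformation formalism: the isometric deformation is encoded by a purely imaginary $\dot{\lambda}=d\!f(Y)+hN$, Theorem \ref{thm:infspin} gives $(H|d\!f|^2)^{\LargerCdot}=-d\!f\wedge d\dot{\lambda}$, and Lemma \ref{thm:vector} expands this wedge product so that reality of $\dot{\rho}$ forces $Y=\grad h$ and leaves $(H|d\!f|^2)^{\LargerCdot}=-\curl(\grad h)=0$. You replace $\dot{\lambda}$ by the classical rotation field $Y$ with $d\!\dot{f}=Y\times d\!f$ (which is $-2\dot{\lambda}$ in disguise, since $2\Im(d\!f\,\dot{\lambda})=-2\dot{\lambda}\times d\!f$ for imaginary $\dot{\lambda}$), replace the wedge computation by the integrability condition $d(Y\times d\!f)=0$, and replace the spin formula for $\dot{H}$ by differentiating $\Delta_M f=2HN$ and using that $\Delta_M$ is intrinsic. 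The structural skeleton is identical in both arguments --- umbilicity of the sphere turns the tangential part of the integrability condition into $Y^{\top}=\grad\psi$, the variation of $H$ is the curl of $Y^{\top}$, and $\curl\circ\grad=0$ finishes --- so your version is best viewed as an unpacking of the paper's Lemma \ref{thm:vector} into bare-hands vector calculus. What your route buys is independence from the surrounding machinery (no Theorem \ref{thm:infspin}, no quaternions); what the paper's route buys is that the same Lemma \ref{thm:vector} computation simultaneously delivers the general formulas for $(|d\!f|^2)^{\LargerCdot}$ and $(H|d\!f|^2)^{\LargerCdot}$ used elsewhere in Section \ref{sec:smooth}. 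Two small remarks: your identity $2\dot{H}=\curl Y^{\top}$ may differ in sign from the paper's convention $\curl(Y)=\langle JX,\nabla_X Y\rangle-\langle X,\nabla_{JX}Y\rangle$, which is harmless here; and when asserting existence of the rotation field $Y$ you should note that the pointwise skew map $d\!f(X)\mapsto d\!\dot{f}(X)$ defined on the tangent plane extends uniquely to a skew endomorphism of $\field{R}^3$, which is what makes $Y$ well defined and smooth.
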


Beyond this we also use the opportunity to review some known results on smooth isothermic surfaces that directly correspond to our discrete results. We rely on the treatment of smooth isothermic surfaces by means of quaternionic analysis as developed in  \cite{Kamberov2002,Kamberov1998,Richter1997}.

\begin{definition}
	Given two immersions $f$ and $\tilde{f}$ of a surface M in $\field{R}^3 \cong \Im\mathds{H}$, $\tilde{f}$ is a spin transformation of $f$ if there exists a quaternion-valued function $\lambda : M \to \mathds{H}\backslash \{0\} $ such that
	\begin{equation*}
		d\!\tilde{f}=\bar{\lambda}d\!f\,\lambda.
	\end{equation*}
	In this case, the normals $N,\tilde{N}$ of $f,\tilde{f}$ are related by $\tilde{N}= \lambda\!^{-1} N \lambda$.
\end{definition}
\begin{theorem}
	Given an immersion $f:M \to \field{R}^3\cong \Im\mathds{H}$ and a function $\lambda : M \to \mathds{H}\backslash \{0\} $. Then $\bar{\lambda}d\!f\,\lambda$ is a closed 1-form if and only if there exists $\rho:M \to \field{R}$ such that 
	\[
	d\!f \wedge d\lambda = -\rho \lambda |d\!f|^2.
	\]
\end{theorem}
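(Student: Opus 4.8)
The plan is to expand the exterior derivative of the quaternion-valued $1$-form $\bar\lambda\,df\,\lambda$ directly and then rewrite the resulting closedness condition into a shape from which the real factor $\rho$ can be read off. First I would apply the (noncommutative) Leibniz rule. Since $f$ is a function we have $d(df)=0$, so
\[
d(\bar\lambda\,df\,\lambda)=d\bar\lambda\wedge df\,\lambda-\bar\lambda\,df\wedge d\lambda,
\]
and hence $\bar\lambda\,df\,\lambda$ is closed if and only if $d\bar\lambda\wedge df\,\lambda=\bar\lambda\,df\wedge d\lambda$.

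Next, because $\lambda$ is nowhere zero I would multiply this identity on the left by $\bar{\lambda}^{-1}$ and on the right by $\lambda^{-1}$; this is an invertible operation, so it preserves the equivalence. Introducing the $\mathds{H}$-valued $1$-form $\mu:=d\lambda\,\lambda^{-1}$, whose conjugate is $\bar\mu=\bar{\lambda}^{-1}d\bar\lambda$, and using that a scalar function pulls through a wedge, the closedness condition becomes the more symmetric statement
\[
df\wedge\mu=\bar\mu\wedge df.
\]

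The key observation is then a conjugation identity in the noncommutative wedge calculus: for quaternion-valued $1$-forms one has $\overline{\alpha\wedge\beta}=-\bar\beta\wedge\bar\alpha$, and since $f$ takes values in $\Im\mathds{H}$ we have $\overline{df}=-df$. Combining these yields $\overline{df\wedge\mu}=-\bar\mu\wedge\overline{df}=\bar\mu\wedge df$. Therefore the closedness condition $df\wedge\mu=\bar\mu\wedge df$ is precisely the statement that $df\wedge\mu$ equals its own conjugate, i.e.\ that the $\mathds{H}$-valued $2$-form $df\wedge\mu$ is in fact real-valued.

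Finally I would use that on a surface the real-valued $2$-forms form a line, spanned at every point by the nowhere-vanishing area form $|df|^2$ (nonvanishing because $f$ is an immersion). Hence $df\wedge\mu$ is real if and only if $df\wedge\mu=-\rho\,|df|^2$ for a unique real function $\rho$; multiplying on the right by $\lambda$ and using $df\wedge(\mu\lambda)=(df\wedge\mu)\lambda$ together with the fact that $\rho$ and $|df|^2$ commute with $\lambda$ recovers $df\wedge d\lambda=-\rho\,\lambda\,|df|^2$, while reversing each step gives the converse. The part demanding the most care is the sign-and-ordering bookkeeping in the quaternionic wedge algebra — in particular establishing $\overline{\alpha\wedge\beta}=-\bar\beta\wedge\bar\alpha$ and seeing that the imaginary character of $df$ collapses the condition to the clean assertion ``$df\wedge\mu$ is real''; once that identity is secured, the reduction to the one-dimensional space of real $2$-forms is immediate.
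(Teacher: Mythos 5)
Your proof is correct. The paper states this theorem without proof (it is quoted from the quaternionic surface theory of Kamberov--Pedit--Pinkall and Richter), so there is no in-paper argument to compare against; your derivation is the standard one, and every step checks out: the graded Leibniz computation $d(\bar\lambda\,df\,\lambda)=d\bar\lambda\wedge df\,\lambda-\bar\lambda\,df\wedge d\lambda$, the reduction via $\mu=d\lambda\,\lambda^{-1}$ to $df\wedge\mu=\bar\mu\wedge df$, the conjugation identity $\overline{\alpha\wedge\beta}=-\bar\beta\wedge\bar\alpha$ combined with $\overline{df}=-df$ to recognize this as the reality of $df\wedge\mu$, and the final identification of real $2$-forms with multiples of the nowhere-vanishing area form $|df|^2$.
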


Suppose $f:M \to \field{R}^3$ is a smoothly immersed surface and $X,JX \in T_p M$ form an orthonormal basis in principal directions at $p \in M$. The corresponding principal curvatures $\kappa_1,\kappa_2 \in \field{R}$ is given by
\[
dN(X_i) = \kappa_i d\!f(X_i). 
\]
The mean curvature $H_p$ at $p$ then satisfies
\[
d\!f(X) \times dN(JX) - d\!f(JX) \times dN(X)  = (\kappa_1 + \kappa_2) N = 2 H N. 
\]
Writing the above formula into quaternions yields
\[
d\!f \wedge dN = 2HN |d\!f|^2.
\]
If $\tilde{f}$ is a spin transformation of $f$, then the change of the metric and the mean curvature can be expressed as follows.
\begin{theorem}
	If $\tilde{f}$ is a spin transformation of $f$ given by $\lambda$ then the followings hold:
	\begin{enumerate}
		\item $\tilde{f}$ and $f$ are conformally equivalent since
		$|d\! \tilde{f}|^2=|\lambda|^4 |d\!f|^2$.
		\item We have $d(\bar{\lambda}d\!f\,\lambda) =0$ and hence $\exists \rho:M \to \field{R}$ such that $d\!f \wedge d\lambda = -\rho \lambda |d\!f|^2$.
		\item $\tilde{H} |d\!\tilde{f}|^2 = (H + \rho) |\lambda|^2 |d\!f|^2$.
	\end{enumerate}
\end{theorem}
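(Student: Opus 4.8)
The plan is to dispatch the three claims in sequence, the first two being essentially formal and the third carrying the actual computation. Throughout I identify $\field{R}^3$ with $\Im\mathds{H}$ and use that $df(X)$ is a tangent vector, hence a purely imaginary quaternion perpendicular to $N$; the normalization conventions for $|df|^2$ (as conformal factor in (1) and as the associated area form in the wedge identities) are those of the cited quaternionic treatment.

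For (1), I would evaluate the induced metric directly. Writing $\lambda = |\lambda|\,u$ with $u$ a unit quaternion, the map $p \mapsto \bar{\lambda}\,p\,\lambda = |\lambda|^2 u^{-1} p\, u$ is $|\lambda|^2$ times a rotation of $\Im\mathds{H}$. Hence for tangent vectors $X,Y$ one gets $\langle d\!\tilde{f}(X), d\!\tilde{f}(Y)\rangle = |\lambda|^4 \langle d\!f(X), d\!f(Y)\rangle$, which is the assertion $|d\!\tilde{f}|^2 = |\lambda|^4 |d\!f|^2$; the same rotation sends $N$ to $\tilde{N}=\lambda^{-1}N\lambda$. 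For (2), since $\tilde{f}$ is by hypothesis an immersion, $d\!\tilde{f}=\bar{\lambda}d\!f\,\lambda$ is exact and therefore closed, $d(\bar{\lambda}d\!f\,\lambda)=0$; the preceding theorem characterizing closedness then produces $\rho:M\to\field{R}$ with $d\!f\wedge d\lambda = -\rho\lambda|d\!f|^2$, so (2) is just an invocation of that theorem.

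The substance is (3). I would start from the defining relation $d\!\tilde{f}\wedge d\!\tilde{N} = 2\tilde{H}\tilde{N}|d\!\tilde{f}|^2$ and compute the left-hand side. Differentiating $\tilde{N}=\lambda^{-1}N\lambda$ with $d(\lambda^{-1})=-\lambda^{-1}d\lambda\,\lambda^{-1}$ gives $d\!\tilde{N} = \lambda^{-1}(dN)\lambda + \lambda^{-1}N\,d\lambda - \lambda^{-1}d\lambda\,\lambda^{-1}N\lambda$, and wedging with $d\!\tilde{f}=\bar{\lambda}d\!f\,\lambda$ yields three terms in which the inner factors $\lambda\lambda^{-1}$ telescope. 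The first term collapses to $\bar{\lambda}(d\!f\wedge dN)\lambda = 2H\,\bar{\lambda}N\lambda|d\!f|^2 = 2H|\lambda|^2\tilde{N}|d\!f|^2$, using the mean curvature identity $d\!f\wedge dN = 2HN|d\!f|^2$ together with $\bar{\lambda}N\lambda = |\lambda|^2\tilde{N}$. The third term collapses to $-\bar{\lambda}(d\!f\wedge d\lambda)\lambda^{-1}N\lambda = \rho|\lambda|^2\tilde{N}|d\!f|^2$ after substituting $d\!f\wedge d\lambda = -\rho\lambda|d\!f|^2$ from (2) and simplifying $\bar{\lambda}\lambda\lambda^{-1}N\lambda = \bar{\lambda}N\lambda$.

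The one step needing care is the middle term, of the form $\bar{\lambda}\big((d\!f\,N)\wedge d\lambda\big)$, where an $N$ sits between $d\!f$ and $d\lambda$. Here I would use that $d\!f(X)$ is tangent, hence perpendicular to $N$, so purely imaginary quaternions anticommute, $d\!f(X)\,N = -N\,d\!f(X)$; this pulls $N$ to the left and reduces the term to $-\bar{\lambda}N(d\!f\wedge d\lambda) = \rho|\lambda|^2\tilde{N}|d\!f|^2$ as well. Summing the three contributions gives $d\!\tilde{f}\wedge d\!\tilde{N} = 2(H+\rho)|\lambda|^2\tilde{N}|d\!f|^2$; comparing with $2\tilde{H}\tilde{N}|d\!\tilde{f}|^2$ and inserting $|d\!\tilde{f}|^2 = |\lambda|^4|d\!f|^2$ from (1) yields $\tilde{H}|d\!\tilde{f}|^2 = (H+\rho)|\lambda|^2|d\!f|^2$, as claimed. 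I expect the anticommutation step in the middle term, together with the bookkeeping of non-commuting quaternionic factors, to be the main obstacle; the remaining work is telescoping and substitution.
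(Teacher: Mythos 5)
The paper states this theorem without proof; it is quoted from the quaternionic-analysis treatment of isothermic surfaces cited in Section 9 (Kamberov et al., Richter), so there is no in-paper argument to compare against. Your proposal is correct and is essentially the standard derivation from that literature: (1) holds because $p\mapsto\bar{\lambda}p\lambda=|\lambda|^2\lambda^{-1}p\lambda$ acts on $\Im\mathds{H}$ as $|\lambda|^2$ times a rotation, (2) is exactness-implies-closedness plus the preceding characterization theorem, and in (3) the three terms of $d\!\tilde{f}\wedge d\tilde{N}$ telescope exactly as you describe, with the anticommutation $d\!f(X)\,N=-N\,d\!f(X)$ (valid since $d\!f(X)$ and $N$ are perpendicular imaginary quaternions) handling the middle term; each $\rho$-term contributes $\rho|\lambda|^2\tilde{N}|d\!f|^2$, giving $2(H+\rho)|\lambda|^2\tilde{N}|d\!f|^2=2\tilde{H}\tilde{N}|d\!\tilde{f}|^2$ and hence the claim.
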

Now we use the fact that every infinitesimal conformal deformation can be expressed as an infinitesimal spin transformation.
\begin{theorem}\label{thm:infspin}
	Suppose $f: M \to \field{R}^3=\Im \field{H}$ is a simply connected smooth surface and $\dot{\lambda}:M \to \field{H}$ is a function. Then there exists an infinitesimal conformal deformation $\dot{f}: M \to \field{R}^3=\Im \field{H}$ given by
	\[
	d\!\dot{f} = 2\Im(d\!f\,{\dot{\lambda}})
	\]
	if and only if there exists $\dot{\rho}:M \to \field{R}$ such that
	\[
	-d\!f \wedge d\dot{\lambda} = \dot{\rho}|d\!f|^2.
	\]
	In particular 
	\begin{align*}
		(|d\! f|^2)^{\LargerCdot} &=  4 \Re(\dot{\lambda}) |d\!f|^2, \\
		(H |d\!f|^2)^{\LargerCdot} &= (\dot{\rho}+ 2\Re(\dot{\lambda})H) |d\!f|^2.
	\end{align*}
	%	
	%	$\dot{\rho} = (H|d\!f|)^{\LargerCdot}$ is the change of mean curvature half density.
\end{theorem}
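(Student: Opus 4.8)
The plan is to recognize the stated formula $d\!\dot{f}=2\Im(d\!f\,\dot{\lambda})$ as the infinitesimal version of a spin transformation: writing a path of spin transformations with $\lambda_0=1$ and $\dot{\lambda}_0=\dot{\lambda}$, one has $\tfrac{d}{dt}\big|_0(\bar{\lambda}\,d\!f\,\lambda)=\bar{\dot{\lambda}}\,d\!f+d\!f\,\dot{\lambda}$, and since $d\!f$ is $\Im\field{H}$-valued this equals $d\!f\,\dot{\lambda}-\overline{d\!f\,\dot{\lambda}}=2\Im(d\!f\,\dot{\lambda})$. Thus the theorem is the linearization of the earlier finite statements on spin transformations, and I would prove it either by differentiating those statements at $\lambda=1$ or---more self-containedly---by a direct computation of $d(d\!\dot{f})$. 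Note first that $d\!\dot{f}$ is manifestly $\Im\field{H}$-valued, so once it is exact the primitive $\dot{f}$ can be taken to map into $\Im\field{H}=\field{R}^3$; since $M$ is simply connected, ``$\dot{f}$ exists'' is equivalent to ``$d\!\dot{f}$ is closed.''

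For the main equivalence I would compute $d(d\!\dot{f})$ from the form $d\!\dot{f}=\bar{\dot{\lambda}}\,d\!f+d\!f\,\dot{\lambda}$. Using $d(d\!f)=0$ and the Leibniz rule for quaternion-valued forms (being careful with the sign $d(\omega\,g)=d\omega\,g-\omega\wedge dg$ and with the order of the noncommuting factors) gives $d(d\!\dot{f})=d\bar{\dot{\lambda}}\wedge d\!f-d\!f\wedge d\dot{\lambda}$. The key algebraic identity is $d\bar{\dot{\lambda}}\wedge d\!f=\overline{d\!f\wedge d\dot{\lambda}}$, which follows because conjugation reverses quaternion products and $d\!f$ is imaginary (so $\overline{d\!f}=-d\!f$); a one-line check in a local frame $d\!f=f_x\,dx+f_y\,dy$, $d\dot{\lambda}=\dot{\lambda}_x\,dx+\dot{\lambda}_y\,dy$ confirms it. Hence $d(d\!\dot{f})=\overline{d\!f\wedge d\dot{\lambda}}-d\!f\wedge d\dot{\lambda}=-2\Im(d\!f\wedge d\dot{\lambda})$, so $d\!\dot{f}$ is closed if and only if $d\!f\wedge d\dot{\lambda}$ is real-valued. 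Because $|d\!f|^2$ is a nowhere-vanishing real area $2$-form, every real $2$-form is a real multiple of it; writing this multiple as $-\dot{\rho}$ yields $-d\!f\wedge d\dot{\lambda}=\dot{\rho}\,|d\!f|^2$ with $\dot{\rho}:M\to\field{R}$, which is exactly the asserted condition, and the argument is reversible.

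For the two ``in particular'' identities I would simply linearize the corresponding finite formulas $|d\!\tilde{f}|^2=|\lambda|^4|d\!f|^2$ and $\tilde{H}\,|d\!\tilde{f}|^2=(H+\rho)|\lambda|^2|d\!f|^2$ at $\lambda=1$, $\rho=0$. From $|\lambda|^2=\lambda\bar{\lambda}$ one gets $\tfrac{d}{dt}\big|_0|\lambda|^2=\dot{\lambda}+\bar{\dot{\lambda}}=2\Re(\dot{\lambda})$ and hence $\tfrac{d}{dt}\big|_0|\lambda|^4=4\Re(\dot{\lambda})$, giving $(|d\!f|^2)^{\LargerCdot}=4\Re(\dot{\lambda})\,|d\!f|^2$; in particular the change of the metric is a real multiple of the metric, which is what makes $\dot{f}$ a \emph{conformal} deformation. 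Differentiating the second formula and using $\rho|_0=0$ kills the cross term and leaves $(H\,|d\!f|^2)^{\LargerCdot}=(\dot{\rho}+2\Re(\dot{\lambda})H)\,|d\!f|^2$.

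The main obstacle I anticipate is purely the quaternionic bookkeeping: getting the Leibniz signs and the noncommutative factor orders right in $d(d\!\dot{f})$, and verifying the conjugation identity $d\bar{\dot{\lambda}}\wedge d\!f=\overline{d\!f\wedge d\dot{\lambda}}$ cleanly. A secondary point that needs care is that, when linearizing the finite theorem, the base point must be taken at $\lambda=1$ (forcing $\rho=0$) so that the products in $(H+\rho)|\lambda|^2$ and in the existence criterion $d\!f\wedge d\lambda=-\rho\lambda|d\!f|^2$ linearize without leftover cross terms; this is what identifies the linearized existence criterion with $-d\!f\wedge d\dot{\lambda}=\dot{\rho}\,|d\!f|^2$.
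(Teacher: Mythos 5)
Your argument is correct, and it follows the route the paper itself intends: the paper states Theorem \ref{thm:infspin} without proof, presenting it as the linearization at $\lambda=1$ of the two preceding (also unproved, cited) theorems on finite spin transformations, which is exactly what you do. Your supplementary direct computation $d(d\!\dot{f})=d\bar{\dot{\lambda}}\wedge d\!f-d\!f\wedge d\dot{\lambda}=-2\Im(d\!f\wedge d\dot{\lambda})$ and the identification of real $2$-forms with multiples of $|d\!f|^2$ are accurate and make the equivalence self-contained.
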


\begin{definition}\label{def:smoothiso}
	A smooth surface $f: M \to \field{R}^3\cong \Im \field{H}$ is called isothermic if locally there exists a non-trivial $\Im \field{H}$-valued closed 1-form $\tau$ such that
	\[
	d\!f \wedge \tau =0.
	\] 
\end{definition}

\begin{theorem}
	A smooth surface $f: M \to \field{R}^3 \cong \Im \field{H}$ is isothermic if and only if locally there exists a non-trivial infinitesimal isometric deformation such that the mean curvature is unchanged.
\end{theorem}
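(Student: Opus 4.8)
The plan is to show both directions of the equivalence using the infinitesimal spin transformation formalism from Theorem \ref{thm:infspin}, since that theorem already packages the relationship between conformal deformations and the change of the metric and mean curvature. The key observation is that an infinitesimal \emph{isometric} deformation preserving $H$ corresponds, in the spin picture, to the conditions $(|df|^2)^{\LargerCdot}=0$ and $(H|df|^2)^{\LargerCdot}=0$, which by the displayed formulas force $\Re(\dot\lambda)=0$ (i.e. $\dot\lambda$ is imaginary-valued) and $\dot\rho=0$.

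For the forward direction, I would start from a nontrivial isometric deformation $\dot f$ preserving $H$. First I would invoke Theorem \ref{thm:infspin} (valid locally, where $M$ is simply connected) to write $d\dot f = 2\Im(df\,\dot\lambda)$ for some $\dot\lambda:M\to\field{H}$, with an accompanying $\dot\rho$ satisfying $-df\wedge d\dot\lambda = \dot\rho\,|df|^2$. The isometry condition gives $\Re(\dot\lambda)\equiv 0$ via the first displayed formula, and preservation of $H|df|^2$ (which equals preservation of $H$ once the metric is fixed) gives $\dot\rho\equiv 0$ via the second. Hence $df\wedge d\dot\lambda = 0$ with $\dot\lambda$ purely imaginary. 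The candidate for the isothermic $1$-form is then $\tau := d\dot\lambda$ (an $\Im\field{H}$-valued $1$-form). It is closed, being exact, and the wedge condition $df\wedge\tau = df\wedge d\dot\lambda = 0$ is exactly Definition \ref{def:smoothiso}. The main point to verify is that $\tau$ is nontrivial: here I would argue that if $d\dot\lambda\equiv 0$ then $\dot\lambda$ is a constant imaginary quaternion, and $d\dot f = 2\Im(df\,\dot\lambda)$ then corresponds to an infinitesimal rotation, contradicting nontriviality of the deformation.

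For the converse, given a nontrivial closed $\Im\field{H}$-valued $1$-form $\tau$ with $df\wedge\tau=0$, I would locally (using simple connectivity) integrate $\tau = d\dot\lambda$ for some imaginary-valued $\dot\lambda$. Then $\Re(\dot\lambda)=0$ and $df\wedge d\dot\lambda = 0$, so the associated $\dot\rho$ vanishes. Theorem \ref{thm:infspin} produces the infinitesimal conformal deformation $\dot f$ with $d\dot f = 2\Im(df\,\dot\lambda)$; the formulas then give $(|df|^2)^{\LargerCdot}=0$ (isometric) and $(H|df|^2)^{\LargerCdot}=0$ (mean curvature preserved). Nontriviality of $\dot f$ follows from nontriviality of $\tau=d\dot\lambda$ by the same rotation argument in reverse.

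The hard part will be the careful bookkeeping in the nontriviality argument in both directions, i.e. correctly identifying which $\dot\lambda$ produce trivial (Euclidean) deformations and confirming that the correspondence $\dot\lambda \leftrightarrow \dot f$ respects the notion of triviality, so that ``$\tau\not\equiv 0$'' and ``$\dot f$ not an infinitesimal isometry of Euclidean space'' match up. The analytic steps (local integration of the closed form, extracting real and imaginary parts) are routine given Theorem \ref{thm:infspin}, so the essential content is really just unwinding the two displayed variation formulas under the constraints $\Re(\dot\lambda)=0$ and $\dot\rho=0$.
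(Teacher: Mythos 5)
Your proposal is correct and follows essentially the same route as the paper: both directions are obtained by locally identifying the closed $1$-form $\tau$ with $d\dot\lambda$ for an imaginary-valued $\dot\lambda$ and then reading off $(|d\!f|^2)^{\LargerCdot}=0$ and $(H|d\!f|^2)^{\LargerCdot}=0$ from the two variation formulas of Theorem \ref{thm:infspin} under the constraints $\Re(\dot\lambda)=0$ and $\dot\rho=0$. The only difference is that you spell out the nontriviality bookkeeping (constant imaginary $\dot\lambda$ giving $d\!\dot f = 2\,d\!f\times c$, an infinitesimal rotation), which the paper leaves implicit.
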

\begin{proof}
	Let $f$ be isothermic and $\tau$ be a 1-form satisfying Definition \ref{def:smoothiso}. The closedness of $\tau$ implies that on any simply connected open set $U\subset M$ there exists $\dot{\lambda}: U \to \Im \field{H}$ such that
	\[
	\tau = d\dot{\lambda}.
	\] 
	Since $d\!f \wedge d\dot{\lambda} =0$, Theorem \ref{thm:infspin} implies that the 1-form $\Im(d\!f \,\dot{\lambda})$ is closed and hence there exists an infinitesimal deformation $\dot{f}:U \to \Im \field{H}$ satisfying
	\begin{gather*}
		d\!\dot{f} = 2\Im(d\!f \, \dot{\lambda}).
	\end{gather*}
	Because $\dot{\lambda}$ is purely imaginary and $d\!f \wedge d\dot{\lambda} =0$, from Theorem \ref{thm:infspin} we have
	\begin{align*}
		(|d\! f|^2)^{\LargerCdot} &= 0, \\
		(H |d\!f|^2)^{\LargerCdot} &= 0
	\end{align*}
	and hence $\dot{f}$ is an infinitesimal isometric deformation preserving the mean curvature. The converse is proved similarly. 
\end{proof}

The smooth analog of Theorem \ref{thm:mob} is a classical result of isothermic surfaces \cite{Hertrich-Jeromin2003}.
\begin{theorem} \label{thm:smoothmob}
	The class of isothermic surfaces is M\"{o}bius invariant.
\end{theorem}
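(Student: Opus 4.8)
The plan is to follow exactly the strategy used in the discrete case (Theorem \ref{thm:mob}): the Möbius group of $\field{R}^3\cup\{\infty\}$ is generated by Euclidean similarities (translations, rotations, scalings) together with inversions, so it suffices to check that Definition \ref{def:smoothiso} is preserved under each generator, working locally away from the preimage of $\infty$. The similarity cases are immediate. If $f$ is replaced by $\bar q\,f\,q+c$ for a unit quaternion $q$ and a constant $c\in\Im\field{H}$, then $df$ is replaced by $\bar q\,df\,q$, and $\tilde\tau:=\bar q\,\tau\,q$ is again a closed $\Im\field{H}$-valued $1$-form with $d\tilde f\wedge\tilde\tau=\bar q\,(df\wedge\tau)\,q=0$; a positive scaling is handled the same way. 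Thus the entire content lies in the behavior under inversion.

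For the inversion $g:=f^{-1}=-f/|f|^2$ in the unit sphere (quaternionic inverse), I would mirror Remark \ref{rmk:spin} and set
\[
\tilde\tau := f\,\tau\,\bar f .
\]
First, $\tilde\tau$ is again $\Im\field{H}$-valued: since $\tau$ is imaginary, $\overline{f\tau\bar f}=f\,\bar\tau\,\bar f=-f\tau\bar f$. It is non-trivial wherever $\tau$ is, because $|\tilde\tau|=|f|^2\,|\tau|$. The condition $dg\wedge\tilde\tau=0$ is then a one-line computation using $dg=-f^{-1}\,df\,f^{-1}$: for tangent vectors $X,Y$,
\[
(dg\wedge\tilde\tau)(X,Y)=-f^{-1}\bigl(df(X)\,\tau(Y)-df(Y)\,\tau(X)\bigr)\bar f=-f^{-1}\,(df\wedge\tau)(X,Y)\,\bar f=0 .
\]

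The step that requires a genuine observation is the closedness of $\tilde\tau$. Writing $\bar f=-f$ and using $d\tau=0$, the graded Leibniz rule gives
\[
d\tilde\tau = df\wedge\tau\,\bar f - f\,\tau\wedge d\bar f = -(df\wedge\tau)\,f + f\,(\tau\wedge df).
\]
The first term vanishes because $df\wedge\tau=0$. For the second I would invoke the key fact, special to $\Im\field{H}$-valued $1$-forms on a surface, that $df\wedge\tau=0$ already forces $\tau\wedge df=0$: evaluating on a basis $X,Y$ and writing $a=df(X),\,b=df(Y),\,p=\tau(X),\,q=\tau(Y)$, the hypothesis reads $aq=bp$, and conjugating (all of $a,b,p,q$ being imaginary, so $\bar a=-a$, etc.) yields $qa=pb$, which is precisely $\tau\wedge df=0$. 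Hence $d\tilde\tau=0$, so $g$ is isothermic with data $\tilde\tau$, and combining this with the similarity cases proves the theorem.

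I expect this conjugation identity $df\wedge\tau=0\Rightarrow\tau\wedge df=0$ to be the only non-formal point; everything else is manipulation of quaternionic $1$-forms directly parallel to the discrete computation in Lemma \ref{lem:trans} and Remark \ref{rmk:spin}, and matches the smooth transformation rule recorded in \cite{Richter1997}.
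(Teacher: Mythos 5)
The paper does not actually prove Theorem \ref{thm:smoothmob}: it records the statement as a classical fact and cites \cite{Hertrich-Jeromin2003}, so there is no in-paper argument to compare against. Your proof is correct and self-contained, and it is the faithful smooth transcription of the discrete argument (Lemma \ref{lem:trans} and Remark \ref{rmk:spin}). The reduction to similarities plus the unit-sphere inversion, the choice $\tilde\tau=f\,\tau\,\bar f$, and the verification of $dg\wedge\tilde\tau=-f^{-1}(d\!f\wedge\tau)\bar f=0$ are all fine; the one step carrying real content is exactly the one you isolate, namely that closedness of $\tilde\tau$ reduces via the graded Leibniz rule and $d\tau=0$ to the implication $d\!f\wedge\tau=0\Rightarrow\tau\wedge d\!f=0$, and your conjugation argument (with $a,b,p,q\in\Im\field{H}$, from $aq=bp$ one gets $qa=\overline{aq}=\overline{bp}=pb$) establishes this correctly; it is the smooth counterpart of the observation in Lemma \ref{lem:eqdef} that $\tau$ is pointwise a real multiple of $d\!f$. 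What the two routes buy: the paper's citation is economical and places the result in its classical context, while your computation makes Section \ref{sec:smooth} exactly parallel to Section \ref{sec:mob} and exhibits the discrete and smooth proofs as literally the same quaternionic formulas. Two cosmetic points: to generate the full M\"obius group you should also admit orientation-reversing similarities (a plane reflection $x\mapsto nxn$ with $n$ a unit imaginary quaternion is handled by $\tilde\tau=n\tau n$ exactly as in your rotation case), and since Definition \ref{def:smoothiso} is local you are indeed free to work away from $f=0$ and from the point sent to infinity, as you note.
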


We need the global existence of $\tau$ in order to relate it to the space of immersions.

\begin{definition}\label{def:stonglysmoothiso}
	A smooth surface $f: M \to \field{R}^3 \cong \Im \field{H}$ is called \emph{strongly isothermic} if there exists a non-trivial $\Im \field{H}$-valued closed 1-form $\tau$ on $M$ such that
	\[
	d\!f \wedge \tau =0.
	\]
	In addition, if $\tau$ is exact and $\tau = d\!f^*$ for some $f^*: M \to \field{R}^3 \cong \Im \field{H}$, then $f^*$ is called a Christoffel dual of $f$. 
\end{definition}

The smooth analog of Corollary \ref{cor:conflength} in Section \ref{sec:infcon} is known:
	\begin{theorem}[\cite{Bohle2008}]
		Strongly isothermic immersions of a closed surface are the points in the space of immersions where the map from the space of immersions to Teich\-m\"{u}l\-ler space (which assigns to each immersion the conformal class of its induced metric) fails to be a submersion.
	\end{theorem}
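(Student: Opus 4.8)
The plan is to mirror the discrete argument of Section~\ref{sec:infcon}, with the codifferential on a Riemann surface playing the role of the skew-adjoint operator $L$ and holomorphic quadratic differentials playing the role of $\Im(L)$. Fix a closed surface $M$ of genus $g\ge 1$ and let $\mathcal{I}$ denote the (Sobolev or Fréchet) manifold of immersions $f:M\to\field{R}^3\cong\Im\field{H}$. The map in question, $\Psi:\mathcal{I}\to\mathcal{T}(M)$, sends $f$ to the point of Teichm\"uller space determined by the conformal class of the induced metric $g=|d\!f|^2$. At $f$ the tangent space $T_f\mathcal{I}$ consists of all $\dot f:M\to\field{R}^3$, and $d\Psi_f(\dot f)$ is the class of the trace-free part of $\dot g = 2\langle d\!f,d\!\dot f\rangle$. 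First I would recall that the cotangent space $T^*_{[g]}\mathcal{T}(M)$ is canonically the space $Q(M)$ of holomorphic quadratic differentials, paired with $\dot g$ by integrating $\langle q,\dot g\rangle$ against the area form; since a holomorphic $q$ is trace-free and divergence-free, this pairing automatically annihilates the pure-trace directions (infinitesimal conformal rescalings) and the Lie-derivative directions (reparametrizations), so it descends to Teichm\"uller space. Hence $\Psi$ fails to be a submersion at $f$ precisely when $\mathrm{coker}(d\Psi_f)\neq 0$, i.e. when there is a nonzero $q\in Q(M)$ with $\int_M\langle q,\dot g\rangle\,dA=0$ for every $\dot f$.

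The core step is to convert this integral condition into the closed $1$-form of Definition~\ref{def:stonglysmoothiso}. Contracting one index of the symmetric tensor $q$ against the $\field{R}^3$-valued $1$-form $d\!f$ produces an $\field{R}^3$-valued $1$-form $q\lrcorner d\!f$, and a short computation gives $\langle q,\dot g\rangle = 2\langle q\lrcorner d\!f,\,d\!\dot f\rangle$ pointwise. As $M$ is closed, integrating by parts component-wise yields $\int_M\langle q\lrcorner d\!f,\,d\!\dot f\rangle\,dA = \int_M\langle \delta(q\lrcorner d\!f),\,\dot f\rangle\,dA$, so the vanishing of the pairing for all $\dot f$ is equivalent to $\delta(q\lrcorner d\!f)=0$. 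I would then set
\[
	\tau:=\star\,(q\lrcorner d\!f),
\]
an $\Im\field{H}$-valued $1$-form. Co-closedness of $q\lrcorner d\!f$ is exactly $d\tau=0$, and in a local conformal orthonormal frame $e_1,e_2$ one checks that trace-freeness of $q$ together with $|d\!f(e_1)|=|d\!f(e_2)|$ and $\langle d\!f(e_1),d\!f(e_2)\rangle=0$ makes both the scalar and the vector part of the quaternionic wedge $d\!f\wedge\tau$ vanish. Thus $\tau$ is a nontrivial closed $1$-form with $d\!f\wedge\tau=0$, so $f$ is strongly isothermic.

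For the converse I would run this correspondence backwards. At each point the relation $d\!f\wedge\tau=0$ (scalar and vector parts) cuts the admissible $\tau$ down to the two-real-dimensional space $\{\star(q\lrcorner d\!f): q\ \text{trace-free symmetric}\}$, which has the same pointwise dimension as the fibre of $Q(M)$; hence a closed $\tau$ with $d\!f\wedge\tau=0$ determines a unique trace-free symmetric $q$ with $\tau=\star(q\lrcorner d\!f)$. Reversing the Hodge-star and integration-by-parts steps, $d\tau=0$ becomes $\delta(q\lrcorner d\!f)=0$, the divergence-free condition equivalent to holomorphicity of $q$, and $\tau\not\equiv 0$ corresponds to $q\neq 0$. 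This exhibits a linear isomorphism between the strongly-isothermic $1$-forms of $f$ and $\mathrm{coker}(d\Psi_f)$, completing the equivalence. As a cross-check, Theorem~\ref{thm:infspin} shows that infinitesimal conformal deformations are spin transformations and so lie in $\Ker(d\Psi_f)$, confirming that only the non-conformal directions survive in the pairing.

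The main obstacle is analytic rather than algebraic. One must fix a functional-analytic setting in which $\mathcal{I}$ is a smooth Banach manifold, $\Psi$ is smooth, and ``submersion'' is meaningful; the identification $\mathrm{coker}(d\Psi_f)\cong Q(M)$ then rests on the ellipticity (Fredholmness) of the linearized conformal-class map, essentially the $\bar\partial$-operator on quadratic differentials, and on carefully quotienting out the diffeomorphism and rescaling directions so that only holomorphic $q$ pair nontrivially. These are exactly the contributions of \cite{Bohle2008}, on which I would rely for the rigorous submersion statement; the remaining task, carried out above, is to match their cokernel with the closed $1$-form $\tau$ of Definition~\ref{def:stonglysmoothiso}. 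The genus enters only through $\dim_{\field{C}}Q(M)$: this is $3g-3$ for $g\ge 2$, equals $1$ for $g=1$ (constant differentials), and vanishes for $g=0$, consistently forcing $\Psi$ to be a submersion and admitting no strongly isothermic spheres.
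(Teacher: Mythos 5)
The paper offers no internal proof of this statement---it is imported verbatim from \cite{Bohle2008}---so the comparison is with that reference and with the paper's own discrete counterpart. Your sketch is essentially the standard argument, and it mirrors the discrete proof in Section \ref{sec:infcon} faithfully: your holomorphic quadratic differential $q$ plays the role of the element $a\in\Im(L)$ annihilating all realizable changes of length cross ratio, and $\tau=\star\,(q\lrcorner d\!f)$ plays the role of the dual $1$-form built there from $L(a)$ via the conditions $d\!f\times\tau=0$, $\langle d\!f,\tau\rangle=-L(a)$. The pointwise linear algebra is correct: $d\!f\wedge\tau=0$ is $4$ real conditions on the $6$-dimensional fibre, the trace-free symmetric $q$ give a $2$-parameter family exhausting the kernel, and the frame computation that trace-freeness kills both the scalar and vector parts of the quaternionic wedge checks out. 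One claim is mislabelled, although your logic survives it: $\delta(q\lrcorner d\!f)=0$ is \emph{not} equivalent to holomorphicity of $q$. Writing $q\lrcorner d\!f=d\!f\circ Q$ for the associated endomorphism $Q$ and using that $\nabla d\!f$ is the second fundamental form $h$ times the normal, one finds
\[
\delta(q\lrcorner d\!f)\;=\;-\,d\!f(\div q)\;-\;\langle q,h\rangle\,N,
\]
so the tangential part of the equation is $\div q=0$ (holomorphicity), while the normal part is the compatibility $\langle q,h\rangle=0$ with the second fundamental form---which is exactly where the isothermic geometry resides, being the contribution of normal variations $\dot f=uN$ with $\dot g=-2uh$. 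Your chain of equivalences never actually uses the claimed equivalence: you only need ``$\int\langle q,\dot g\rangle=0$ for all $\dot f$ $\Leftrightarrow$ $\delta(q\lrcorner d\!f)=0$ $\Leftrightarrow$ $d\tau=0$,'' and in the converse direction holomorphicity of the recovered $q$ follows from the tangential component alone, so the proof goes through; but as written the sentence would mislead a reader into thinking closedness of $\tau$ encodes only $\bar\partial q=0$. The genuinely analytic points---the manifold structure on the space of immersions, Fredholmness of the linearized map, the identification of the cotangent space of Teichm\"uller space with $Q(M)$---you explicitly delegate to \cite{Bohle2008}, which is the same division of labour the paper itself makes, so this is an acceptable and essentially complete reconstruction rather than a gap.
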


Furthermore, Theorem \ref{thm:intangles} is analogous to the infinitesimal version of the following:
\begin{theorem}[\cite{Burstall2002}]
	Suppose $f,\tilde{f}: M \to S^3$ are two conformal immersions that share the same Hopf differential but do not differ by a M\"{o}bius transformation. Then $f$ and $\tilde{f}$ are isothermic surfaces.
\end{theorem}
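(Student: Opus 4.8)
The plan is to establish the statement through the local characterization in Definition~\ref{def:smoothiso}: on each simply connected patch one must exhibit a nonzero closed $\Im\mathds{H}$-valued $1$-form $\tau$ with $d\!f\wedge\tau=0$, and likewise for $\tilde f$. Because isothermicity is a local property and, by Theorem~\ref{thm:smoothmob}, invariant under Möbius transformations, I would work on a simply connected chart, fix a conformal parameter $z$ for the common conformal structure, and normalize $f$ and $\tilde f$ by ambient Möbius transformations as convenient. The organizing principle is the Bonnet theorem of Möbius geometry: a conformal immersion into $S^3$ is determined up to a Möbius transformation by its conformal structure, its conformal Hopf differential $\mathfrak q$, and one further real invariant (the Möbius metric factor), all coupled by the conformal Gauss--Codazzi equations. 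Since $f$ and $\tilde f$ share the first two data, their entire discrepancy lives in the third invariant, and the crux is that freezing $\mathfrak q$ turns the variation of this invariant into an isothermic deformation.

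Concretely I would pass to the projective light-cone model $S^3=\mathbb{P}(\mathcal L)\subset \mathbb{P}(\field{R}^{4,1})$, lift $f$ to the light cone, and form its conformal Gauss map (central sphere congruence) $S\colon M\to\{\text{spheres}\}$, with $\tilde S$ the analogue for $\tilde f$. Here the Möbius invariants are read from $dS$: the $(2,0)$-part encodes $\mathfrak q$, a real scalar $c$ records the remaining invariant, and flatness of the tautological $\mathfrak{so}(4,1)$-valued connection is the conformal Gauss--Codazzi system. The decisive structural fact is that, once the conformal structure and $\mathfrak q=\tilde{\mathfrak q}$ are fixed, these equations become \emph{affine} in the remaining invariant; consequently the difference $c-\tilde c$ of the two solutions satisfies the associated \emph{homogeneous linear} equation. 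This homogeneous equation is exactly the integrability condition for a closed $1$-form of the type required by Definition~\ref{def:smoothiso} -- equivalently, the linearized Calapso equation governing the isothermic ($T$-)deformation family.

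Next I would convert the homogeneous solution $c-\tilde c$ into the form $\tau$. Sharing the conformal structure, $f$ and $\tilde f$ are related by a spin transformation $d\tilde f=\bar\lambda\,d\!f\,\lambda$ in the sense of Section~\ref{sec:smooth}, and the spin-transformation theorem supplies $\rho$ with $d\!f\wedge d\lambda=-\rho\lambda|d\!f|^2$ together with the law $\tilde H|d\tilde f|^2=(H+\rho)|\lambda|^2|d\!f|^2$. I would show that $c-\tilde c$ is the potential for a purely imaginary $\dot\lambda$ satisfying $d\!f\wedge d\dot\lambda=0$ -- the infinitesimal generator of the isothermic family joining $f$ and $\tilde f$. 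Setting $\tau:=d\dot\lambda$ and invoking Theorem~\ref{thm:infspin} then gives $(|d\!f|^2)^{\LargerCdot}=0$ and $(H|d\!f|^2)^{\LargerCdot}=0$ (since $\Re\dot\lambda=0$, and $d\!f\wedge d\dot\lambda=0$ forces $\dot\rho=0$), which is precisely the situation of Definition~\ref{def:smoothiso}. Hence $f$ is isothermic, and by the symmetry of the hypotheses (or by transporting $\tau$ through $\lambda$) so is $\tilde f$.

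The hard part will be twofold. First, one must pin down the correct Möbius-invariant Hopf differential via the conformal Gauss map and prove the rigidity together with the affine structure above rigorously; the cleanest route realizes $\tau$ directly as (the $\Im\mathds{H}$-transport of) the bracket $[S,\tilde S]$ of the two conformal Gauss maps and verifies its closedness from flatness of the Möbius connection. Second, one must secure the dichotomy: $\tau\equiv0$ should force $c=\tilde c$ and hence, by the Bonnet rigidity, that $f$ and $\tilde f$ differ only by a Möbius transformation. Thus the non-congruence hypothesis is exactly what guarantees $\tau\neq0$, and this finite statement is the integrated counterpart of the infinitesimal rigidity underlying Theorem~\ref{thm:intangles}.
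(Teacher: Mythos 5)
You should first note that the paper contains no proof of this statement: it is quoted verbatim from Burstall--Pedit--Pinkall \cite{Burstall2002}, so your sketch can only be measured against that reference. Its skeleton does reconstruct the strategy of \cite{Burstall2002} correctly: in the light-cone model the M\"obius invariants of a conformal immersion of a Riemann surface into $S^3$ are the conformal Hopf differential $q$ and the Schwarzian $c$; the conformal Gauss equation expresses $c_{\bar z}$ in terms of $q$ and its derivatives alone, while the conformal Codazzi equation is real-linear in $c$, so for two immersions sharing the conformal structure and $q$ the difference $Q := c - \tilde c$ is a holomorphic quadratic differential satisfying the homogeneous reality condition (schematically, $\bar q\,Q$ real); and $Q \neq 0$ precisely by the Bonnet-type uniqueness theorem, which is where the hypothesis that $f$ and $\tilde f$ are not M\"obius congruent enters. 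All of this you identify, and you correctly flag the structure equations and the uniqueness theorem as the load-bearing technical parts.

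The genuine gap is your conversion of $Q$ into the $1$-form $\tau$ of Definition~\ref{def:smoothiso}. You claim that $c-\tilde c$ is the potential of a \emph{purely imaginary} $\dot\lambda$ with $d\!f\wedge d\dot\lambda=0$, describing it as the generator of ``the isothermic family joining $f$ and $\tilde f$''. But the family joining $f$ to $\tilde f$ with $q$ fixed and Schwarzian $c+tQ$ is the Calapso ($T$-transform) family, which preserves only the \emph{conformal class} of the induced metric, whereas by Theorem~\ref{thm:infspin} a purely imaginary $\dot\lambda$ with $d\!f\wedge d\dot\lambda=0$ generates an \emph{isometric} deformation with $\dot H=0$. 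These are two different deformations --- both characteristic of isothermic surfaces, which is presumably the source of the conflation --- so the identification you rely on is false as stated, and with it your construction of $\tau$ collapses. The bridge your sketch needs and does not supply is the classical equivalence: a conformal immersion with Hopf differential $q$ is isothermic if and only if there exists a nonzero holomorphic quadratic differential $Q$ with $\bar q\,Q$ real; away from the zeros of $Q$ one chooses a chart with $Q=dz^2$, which makes $q$ real, i.e.\ $z$ is a conformal curvature line parameter, and then $\tau$ arises as the differential of the Christoffel dual. Since $f$ and $\tilde f$ share $q$, the same $Q$ serves for both, giving the conclusion; this equivalence must be proved or cited, not asserted. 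Your fallback suggestion that $\tau$ is realized as the bracket $[S,\tilde S]$ of the two conformal Gauss maps is likewise unsubstantiated: that object is $\mathfrak{so}(4,1)$-valued, and you verify neither its closedness nor how it descends to an $\Im\field{H}$-valued form annihilated by wedging with $d\!f$.
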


Finally, we establish a smooth counterpart of Theorem \ref{thm:sphiso} in Section \ref{sec:inscribedmesh}. We first need a lemma.

\begin{lemma} \label{thm:vector}
	Let $M$ be a surface with a Riemannian metric and $f:M \to \field{R}^3$ be an isometric immersion. Let $\lambda = g + d\!f(Y) + hN$ be an $\field{H}$-valued function on $M$ where $g$,$d\!f(Y)$ and $hN$ are its scalar, tangential and normal components. Then we can express $d\!f\wedge d\lambda$ in terms of standard operators from the vector calculus on $M$ as
	\[
	-d\!f \wedge  d\lambda = \big[ -\curl Y  + d\!f (J\grad g - \A Y +\grad h) - \big((\div Y)+ 2hH\big)N \big] |d\!f|^2.
	\]
\end{lemma}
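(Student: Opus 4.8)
The plan is to verify the identity pointwise by evaluating both sides on a convenient oriented orthonormal frame. Fix $p \in M$ and choose an orthonormal basis $X_1, X_2 = JX_1$ of $T_pM$. Writing $e_i := d\!f(X_i)$, the isometry of $f$ makes $e_1, e_2$ orthonormal imaginary quaternions, and the positively oriented unit normal is $N = e_1 e_2$. This yields the multiplication table $e_1^2 = e_2^2 = N^2 = -1$, $e_1 e_2 = N$, $e_2 N = e_1$, $N e_1 = e_2$ (together with the opposite-sign products), which is the only quaternionic input needed. Since both sides of the asserted equation are $\field{H}$-valued $2$-forms and $|d\!f|^2$ restricts to the area form with $|d\!f|^2(X_1,X_2)=1$ (the same normalization under which $d\!f\wedge dN = 2HN|d\!f|^2$ holds), it suffices to show that $(-d\!f\wedge d\lambda)(X_1,X_2)$ equals the bracketed quaternion.

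First I would differentiate $\lambda = g + d\!f(Y) + hN$ along $X_i$ using the flat connection of $\field{R}^3$ together with the Gauss--Weingarten equations. With the sign convention $dN(X) = d\!f(\A X)$ already fixed above, the Gauss formula gives $\nabla_{X_i}(d\!f(Y)) = d\!f(\nabla_{X_i} Y) - \langle \A X_i, Y\rangle N$, so that $d\lambda(X_i)$ splits into a scalar part $X_i(g)$, a tangential part $d\!f(\nabla_{X_i}Y + h\,\A X_i)$, and a normal part $(X_i(h) - \langle \A X_i, Y\rangle)N$.

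Next I would compute $(d\!f\wedge d\lambda)(X_1,X_2) = e_1\, d\lambda(X_2) - e_2\, d\lambda(X_1)$ by expanding every quaternionic product with the multiplication table and collecting the result into scalar, $e_1$-, $e_2$-, and $N$-components. Three identifications then finish the argument: the scalar component collapses to $\langle \nabla_{X_1}Y, X_2\rangle - \langle \nabla_{X_2}Y, X_1\rangle = \curl Y$, where the $h\,\A$-terms cancel by symmetry of $\A$; the $N$-component is $\langle \nabla_{X_1}Y,X_1\rangle + \langle \nabla_{X_2}Y,X_2\rangle + h\operatorname{tr}\A = \div Y + 2hH$; and the two tangential components assemble, after recognizing $J\grad g = -X_2(g)X_1 + X_1(g)X_2$, into $-d\!f(J\grad g - \A Y + \grad h)$. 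Negating to pass from $d\!f\wedge d\lambda$ to $-d\!f\wedge d\lambda$ produces exactly the claimed bracket times $|d\!f|^2$.

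The routine part is the bookkeeping of the dozen quaternion products; the only genuine points of care are the three sign conventions that must be kept mutually consistent --- the shape-operator sign $dN = d\!f\circ\A$, the sign in the Gauss formula for the normal component of $\nabla_{X_i}(d\!f(Y))$, and the order of multiplication in the wedge --- together with the observation that the antisymmetric (scalar) combination is precisely the surface curl while the symmetric one reproduces $\div Y + 2hH$. Getting any one of these conventions wrong flips a sign and breaks the match, so I would anchor all of them to the unit-sphere check $\A = \operatorname{Id}$, $H = 1$ used earlier in this section.
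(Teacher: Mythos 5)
Your proposal is correct and follows essentially the same route as the paper's proof: decompose $\lambda$ into scalar, tangential and normal parts, differentiate using the Gauss--Weingarten equations with the convention $dN = d\!f\circ \A$, evaluate the wedge on the orthonormal frame $X, JX$, and identify the antisymmetric and symmetric combinations with $\curl Y$ and $\div Y + 2hH$. The sign bookkeeping you flag (in particular $N d\!f(\grad g) = d\!f(J\grad g)$ and the cancellation of the $h\A$ terms in the scalar part) checks out against the paper's computation.
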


\begin{proof}
	In the following we assume that $X\in T_pM$ is an unit tangent vector. We first consider the scalar component.
	\begin{align*}
		-d\!f \wedge dg (X,JX) &= -d\!f(X)dg(JX) + d\!f(JX)dg(X) \\
		&= N d\!f(dg(X) X + dg(JX) JX) \\
		&= N d\!f (\grad g).
	\end{align*}	
	Then we consider the normal component.
	\begin{align*}
		-d\!f \wedge d(hN) (X,JX) &= \big( (-d\!f \wedge dh)N - h d\!f \wedge dN\big) (X,JX) \\
		&= N d\!f(\grad h) N - h\big(d\!f(X) dN(JX) - d\!f(JX)dN(X)\big) \\
		&= d\!f(\grad h) - 2h H N.
	\end{align*}
	Finally we look at the tangential component. Notice that for an immersed surface in Euclidean space the induced Levi-Civita connection is given as follows: for any tangent vector field $Y$ and tangent vector $Z$,
	\begin{align*}
		d\!f(\nabla_Z Y)&=d(d\!f(Y))(Z) - \langle d(d\!f(Y))(Z),N \rangle N \\
		&=d(d\!f(Y))(Z) + \langle d\!f(Y),d\!f(AZ) \rangle N \\
		&=d(d\!f(Y))(Z) + \langle Y, AZ \rangle N
	\end{align*}
	where $A$ is the shape operator of the immersion $f$. We recall the definition of curl and divergent operator of a tangent vector field $Y$:
	\begin{align*}
		\div(Y) :&= \langle X , \nabla_X Y \rangle + \langle X , \nabla_X Y \rangle, \\
		\curl(Y) :&= \langle JX , \nabla_X Y \rangle - \langle X , \nabla_{JX} Y \rangle \\
		&= -\langle X , \nabla_X JY \rangle - \langle JX , \nabla_{JX} JY \rangle \\
		&= -\div(JY)	.
	\end{align*}
	Collecting the above information we now obtain
	\begin{align*}
		&-d\!f \wedge d(d\!f(Y)) (X,JX) \\ =& -d\!f(X)\big(d\!f(\nabla_{JX} Y) - \langle Y,AJX \rangle N \big) 
		+ d\!f(JX)\big(d\!f(\nabla_{X} Y) - \langle Y,AX \rangle N \big) \\
		=& \langle X,\nabla_{JX} Y \rangle - \langle JX, \nabla_X Y \rangle - \langle JX, \nabla_{JX} Y \rangle N \\ &+ \langle  -X , \nabla_X Y \rangle N - \langle AY,JX \rangle d\!f(JX) - \langle AY,X \rangle d\!f(X) \\
		=& -\curl Y - (\div Y)N - d\!f(\A Y). 
	\end{align*} 
\end{proof}

\begin{proof}[Proof of Theorem \ref{thm:sspiso}]
	Suppose $f: M \to S^2$ is an immersion and $\dot{\lambda}: M \to \Im(\field{H})$ induces an infinitesimal isometric deformation of $f$. Then writing $\dot{\lambda} = d\!f(Y) + hN$, we have
	\[
	(H|d\!f|^2)^{\LargerCdot} = -d\!f \wedge d\dot{\lambda} = -\curl Y + d\!f(-Y + \grad h)- ((\div Y + 2h))N.
	\]
	Comparing the imaginary part yields
	\[
	Y = \grad h
	\]
	and thus
	\[
	(H|d\!f|^2)^{\LargerCdot} = - \curl(Y) = - \curl(\grad h) =0.
	\] 
	Since $(|d\!f|^2)^{\LargerCdot}=0$, we have $\dot{H}=0$. 
\end{proof}

% BibTeX users please use one of
%\bibliographystyle{spbasic}      % basic style, author-year citations
\bibliographystyle{spmpsci}      % mathematics and physical sciences
%\bibliography{isothermicsurf}
%\bibliographystyle{spphys}       % APS-like style for physics

\end{document}